\numberwithin{equation}{section}
\numberwithin{figure}{section}
 \theoremstyle{definition}
 \newtheorem*{defin*}{\protect\definitionname}
 \newtheorem{exam}{Example}
\theoremstyle{plain}
\newtheorem{theo}{\protect\theoremname}
\newtheorem*{theo*}{\protect\theoremname}
  \theoremstyle{plain}
  \newtheorem{lem}[theo]{\protect\lemmaname}
  \newtheorem{cor}[theo]{\protect\corname}
  \theoremstyle{plain}
  \newtheorem*{remark}{Remark}
  \providecommand{\definitionname}{Definition}
  \providecommand{\lemmaname}{Lemma}
  \providecommand{\theoremname}{Theorem}
  \providecommand{\corname}{Corollary}
\newcommand{\E}{\mathbb{E}}	
\newcommand{\cR}{\mathcal{R}}
\newcommand{\cG}{\mathcal{G}}
\newcommand{\sD}{\mathscr{D}}
\newcommand{\sS}{\mathscr{S}}
\newcommand{\sP}{\mathscr{P}}
\newcommand{\N}{\mathbb{N}}
\renewcommand{\P}{\mathbb{P}}
\newcommand{\R}{\mathbb{R}}
\newcommand{\tr}{\operatorname{tr}}
\newcommand{\iid}{\text{i.i.d. }}
\newcommand{\tensor}{\otimes}
\newcommand{\eps}{\epsilon}
\DeclareMathOperator{\1}{\mathbbm{1}}
\DeclareMathOperator{\Cov}{\operatorname{Cov}}
\newcommand{\eqdist}{\stackrel{(d)}{=}}
\DeclareMathOperator{\prob}{\mathbb{P}}
\newcommand{\abs}[1]{\left\lvert#1\right\rvert}
\newcommand{\norm}[1]{\lvert\lvert#1\rvert\rvert}
\newcommand{\bone}{\1}
\renewcommand{\phi}{\varphi}
\renewcommand{\epsilon}{\varepsilon}
\def\mqcut{{\sf MaxCut }_\kappa}
\def\ER{Erd\H{o}s-R\'enyi }
\def\ve{\varepsilon}
\def\de{{\rm d}}
\title[Inhomogeneous Potts Spin Glass and MAX $\kappa$-CUT]
{A connection between MAX $\kappa$-CUT and the Inhomogeneous Potts Spin glass in the
large degree limit}
\author{Aukosh Jagannath}
\address[Aukosh Jagannath]{Department of Mathematics, University of Toronto}
\email{aukosh@math.toronto.edu}
\author{Justin Ko}
\address[Justin Ko]{Department of Mathematics, University of Toronto}
\email{jko@math.toronto.edu}
\author{Subhabrata Sen}
\address[Subhabrata Sen]{Department of Statistics, Stanford University, California}
\email{ssen90@stanford.edu}
\date{\today}
\begin{document}

\begin{abstract}
We study the asymptotic behavior of the Max $\kappa$-cut on a family of sparse, inhomogeneous random graphs. In the large degree limit, the leading term is a variational problem, involving the ground state of a constrained inhomogeneous Potts spin glass. We derive a Parisi type formula for the free energy of this model, with possible constraints on the proportions, and derive the limiting ground state energy by a suitable zero temperature limit. 
\end{abstract}

\maketitle

\section{Introduction}
Networks arise in various applications in economics, engineering and social sciences. In a typical social science application, the vertices of the network represent individuals, while their relationships are represented by the edges. The study of structural properties of these networks, and algorithms to find these structures are extremely  important in this context. Various random graph models have been introduced to study such real-life networks (see, e.g., \cite{holland83sbm})--- and questions about networks translate directly into questions about random graphs under this approach. 
Graph partition problems are natural class of algorithmic questions which arise in this context. In these problems, the goal is to 
find a partition of the vertex set that maximizes some objective function, 
typically given by a function of the edges.  Graph partition problems are of interest in applications as diverse as community detection  \cite{decelle2011sbm} and VLSI design \cite{kahng2011partition}. 
In this paper, we focus on the Max $\kappa$-cut, an important example in this class. 

For any graph $G= (V,E)$, the Max $\kappa$-cut problem (henceforth denoted as $\mqcut$) 
seeks to divide the vertices, $V$, into $\kappa$ (not necessarily equal) parts such that 
the number of edges between distinct parts is maximized. 
For $\kappa=2$ this reduces to the well known ${\sf{MAXCUT}}$ problem 
(see \cite{poljaktuza1993maxcut} for a survey of the $\sf{MAXCUT}$ problem). 
From the point of view of complexity theory, these questions are usually NP hard in the worst
case. This motivates a study of average case complexity, often formalized by studying this problem on random graph instances. As a first attempt, one seeks to determine the typical behavior of these quantities on a random graph--- this provides a valuable benchmark for comparing the performance of specific algorithms on random instances. 

Such questions have been studied in classical settings, such as  the \ER random graph and random regular graph ensembles.
The key insight in this setting is a connection between statistical physics and the $\mqcut$ problem, enunciated as follows. 
 Any $\kappa$-cut can be represented by an assignment of spins $\sigma \in [\kappa]^N$ to the vertices of the graph. Further, setting $A= (A_{ij})$ to be the adjacency matrix of the random graph $G$, we have, 
\begin{align}
\frac{\mqcut(G)}{N} = \frac{1}{2N} \max_{\sigma \in [\kappa]^N} \sum_{i, j =1 }^N A_{i,j} \bone(\sigma_i \neq \sigma_j).  \label{eq:representation}
\end{align}
In statistical physics parlance, \eqref{eq:representation} establishes a direct relation between the $\mqcut$ and the ground state of the antiferromagnetic Potts model on the graph. Connections between
graph partition problems and statistical physics are, by now, classical  \cite{fu-anderson}. For a
textbook  introduction to the physical perspective on these questions, we refer the reader to \cite{mezard2009information,MPV}.

Physicists predict that the antiferromagnetic nature of the Max $\kappa$-cut should force the quantity to behave as the ground state of a disordered spin glass, and its behavior in graphs with large degrees should be well approximated by properties of ground states in mean field spin glasses.
For the ${\sf {MAXCUT} }$ problem on sparse \ER and random regular graphs, this idea was partially formalized in \cite{dembo2016extremal} and \cite{sen2016optimization}. The authors of \cite{dembo2016extremal}  deduced that for $G\sim G(N, \frac{c}{N})$, as $N \to \infty$, we have, 
\begin{align}
\frac{{\sf MAXCUT}(G)}{N} = \frac{c}{4} + {\sf{P}_*} \sqrt{\frac{c}{4}} + o_c(\sqrt{c}).  \label{eq:prev_result}
\end{align}
Here, ${{\sf P}}_*$ is the limiting ground state energy of the Sherrington-Kirkpatrick model \cite{P3}. 
Here and henceforth in the paper, we say that a sequence of random variables, $(X_N)$, satisfies $X_N = o_c(\sqrt{c})$ if and only if there is a deterministic function $g(c) = o(\sqrt{c})$ such that $\P[|X_N| \leq g(c)] \to 1$ as $N \to \infty$. The first term in the right hand side comes from the standard observation that a typical partition of the vertices will contribute $Nc/4$ edges to the cut in expectation. The second term is the leading order correction, and specifies the difference in size between a typical cut and the MAXCUT. An analogous formula for the $\mqcut$ on sparse \ER and random regular graphs was derived in \cite{sen2016optimization}.

 For practical applications, it is thus of natural interest to determine the typical value of these quantities on 
random graph ensembles that capture natural properties of realistic networks. In practice, networks are typically observed to be sparse and ``inhomogeneous" \cite{albert02review,doro2002survey}.
The simplest random graph models, such as \ER and random regular graphs, lead to instances where the degree distributions are relatively concentrated--- a feature seldom observed in real networks. To address this issue, a plethora of models have been introduced, which faithfully capture some of the observed characteristics of real networks. 
In this paper, we seek to establish formulae similar to \eqref{eq:prev_result} for a general family of graph models using the framework of \cite{dembo2016extremal} and \cite{sen2016optimization}.  Our approach leads naturally to the study of an inhomogeneous Potts spin glass model, which has yet to be studied rigorously in the mathematical literature.

Let us first explain the class of random graph models that we study. 
Our framework will be similar to the one introduced by Soderberg \cite{soderberg2002inhomogeneous} and adopted by Bollobas, Janson and Riordan \cite{BJR2007inhomogeneous}. Furthermore, this model has natural connections to the theory of graphons for dense sequences of random graphs \cite{BC1,BC2}. 
Consider a symmetric 
 kernel $K:[0,1]^2\to[0,\infty)$. We will assume throughout that $K \in L^1([0,1]^2,\de x)$. 
 Given such a kernel, consider the following model for a sequence of inhomogeneous random graphs $\cG_N=(V_N,E_N)$. 
For all $N\geq 1$, we let the vertex set be $V_N =[N]$. The edges will then be added independently with probability 
\begin{align}
\prob[\{i,j\}\in E_N] = \min\Big\{c\frac{\tilde K(i,j)}{N},1 \Big\}, \label{eq:connection}
\end{align}
where $\tilde K$ is the average of $K$ within blocks,
\begin{align}
\tilde{K}_N(i,j) = N^2 \int_{ [\frac{i-1}{N}, \frac{i}{N}] \times [\frac{j-1}{N}, \frac{j}{N}] } K(x,y)\, \de x \de y.  
\end{align}
This specifies the random graph model. The parameter $c$ controls the degree of the vertices. 
We note that this model is more restricted compared to that of Bollobas, Janson and Riordan \cite{BJR2007inhomogeneous}. In the notation of \cite{BJR2007inhomogeneous}, we restrict ourselves to the case where the ground space $\mathcal{S} = [0,1]$ and the measure $\mu$ is the Lebesgue measure. Further, the model introduced in \cite{BJR2007inhomogeneous} is governed by the value of the kernel $K$ on a set of measure zero. Here we average over small partitions of the kernel, so that
we may avoid technical subtleties on sets of measure zero. For reasonable kernels, such as continuous ones, this distinction will be negligible. 

To state our main result in a concrete setting, let us first work in the case when $K$ is block constant. 
That is, we assume that  there are numbers 
\[
0= t_0 < t_1 < t_2 < \ldots < t_{M-1} < 1= t_M
\]
such that $K$ is constant on each square of the form $[t_{j-1}, t_j] \times [ t_{k-1}, t_k]$ for $0\leq j, k \leq M$. Further, we set $\rho^{s} = t_{s}- t_{s-1}$, $s= 1, \ldots, M$. For any such block kernel $K$, let $\mathbf{K}$ denote the $M\times M$ matrix of the values of the kernel on the blocks.  For technical reasons, we will work with block kernels such that the matrix $\mathbf{K}$ is positive definite. Finally, we note that any block constant kernel with finitely many blocks is almost surely bounded. By a standard application of the Efron-Stein inequality \cite{BLM2013conc}, it suffices to study the asymptotic behavior of $\E[\mqcut(G_N)]/N$. 

To analyze this quantity, we introduce the following notation. For any finite set $\sS$, let $\sD$ be the space of proportions, given by 
\begin{equation}
\sD = \Big\{ (d_1^s, \dots, d_{\kappa}^s)_{s \in \sS} \mathrel{}\Big| d_k^{s} \geq 0, \mathrel{} \sum_{k = 1}^{\kappa} d_k^s = 1 ~\forall s \in \sS \Big\}.\label{eq:props}
\end{equation}
In our setting, $\sS=[M]$. Any $d \in \sD$ can be expressed as $d = (d^s)_{s \in \sS}$, where $(d^s)_{s \in \sS}$ is a collection of probability measures on $[\kappa]$. The distribution $d^s$ governs the proportion of vertices in block $s$ which belong to the partition $i$, $1\leq i \leq \kappa$. We will refer to the elements $d \in \sD$ as proportions. The following theorem characterizes the value of the $\mqcut$ problem for inhomogeneous graphs $G_N$ with block constant kernels and large degrees, up to corrections which are $o(\sqrt{c})$. 

\begin{theo}
\label{thm:discretization}
We have, as $N\to \infty$, 
\begin{align}
\lim_{N \to \infty} \E\Big[ \frac{\mqcut(G_N)}{N} \Big] 
&= \sup_{d \in \sD} \Big[ \frac{c}{2} \sum_{s,t = 1}^M \mathbf{K}(s, t) \rho^{s} \rho^{t} \bigl(1- \langle d^s, d^t \rangle \bigr) + \frac{\sqrt{c}}{2} \mathcal{P}(d) \Big] + o(\sqrt{c}). \nonumber
\end{align}
\end{theo}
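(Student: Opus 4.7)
My plan follows the two-step strategy of \cite{dembo2016extremal,sen2016optimization}: split the cut into a deterministic ``typical'' contribution plus a centered random fluctuation, then match the fluctuation to the ground state of a Gaussian Potts spin glass via a Lindeberg/universality comparison. The new features relative to the homogeneous \ER setting are the need to stratify by block-wise proportions and the fact that the comparison spin glass is the inhomogeneous Potts model whose Parisi-type formula is the main analytic ingredient of this paper.

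First, I would partition configurations by proportions. Let $\sD_N\subset \sD$ be the set of block-wise empirical proportions achievable by $\sigma\in[\kappa]^N$ (a discrete $1/N$-net in $\sD$), and for $d\in\sD_N$ let $\sX_N(d)\subset[\kappa]^N$ collect those $\sigma$ whose empirical distribution on block $s$ equals $d^s$. Then \eqref{eq:representation} gives
\begin{align*}
\frac{\mqcut(G_N)}{N} = \max_{d\in \sD_N}\frac{1}{2N}\max_{\sigma\in \sX_N(d)}\sum_{i,j}A_{ij}\,\bone(\sigma_i\neq\sigma_j).
\end{align*}
Writing $A_{ij}=p_{ij}+B_{ij}$ with $p_{ij}=\E A_{ij}=c\mathbf{K}(s,t)/N$ (for $i$ in block $s$, $j$ in block $t$) and $B_{ij}$ centered, a direct count for $\sigma\in\sX_N(d)$ gives the deterministic contribution
\begin{align*}
\frac{1}{2N}\sum_{i,j} p_{ij}\,\bone(\sigma_i\neq\sigma_j) = \frac{c}{2}\sum_{s,t}\mathbf{K}(s,t)\,\rho^s\rho^t\bigl(1-\langle d^s,d^t\rangle\bigr)+O(1/N),
\end{align*}
which after taking expectation and the $\max$ over $d$ reproduces the leading $c$-term in the theorem.

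The crux is to show that the centered piece $H_N(\sigma):=\frac{1}{2N}\sum_{i,j}B_{ij}\bone(\sigma_i\neq\sigma_j)$ satisfies $\E\max_{\sigma\in\sX_N(d)}H_N(\sigma)=\tfrac{\sqrt{c}}{2}\mathcal{P}(d)+o(\sqrt{c})$ as $c\to\infty$, uniformly in $d$. I would do this in two moves. First, a smoothed Lindeberg interpolation at inverse temperature $\beta$, applied to the constrained partition function $Z_\beta(d)=\sum_{\sigma\in\sX_N(d)}e^{\beta H_N(\sigma)}$, allows the replacement of each $B_{ij}$ by an independent Gaussian $g_{ij}$ with matching variance $p_{ij}(1-p_{ij})\approx c\mathbf{K}(s,t)/N$; the third-moment cost is $O(\beta^3 c^{3/2}/N^{1/2})$, which is $o(\sqrt{c})$ after a suitable $\beta,N$ schedule. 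Second, the replaced process $H_N^g$ is, up to normalization, precisely the inhomogeneous Potts spin glass Hamiltonian of this paper, and its constrained zero-temperature free energy equals $\tfrac{\sqrt{c}}{2}\mathcal{P}(d)$ by the Parisi-type formula and its zero-temperature limit established earlier. Combining the two steps, sending $\beta\to\infty$ after $N\to\infty$, and finally passing from the $\max$ over $\sD_N$ to the $\sup$ over $\sD$ by continuity of both terms, yields the theorem.

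The principal obstacle is executing this universality step under the hard proportion constraint: the Lindeberg comparison must be done at the level of $\log Z_\beta(d)$ rather than the unconstrained partition function, and the replacement error must be uniform both over $\sigma\in\sX_N(d)$ and over $d\in\sD_N$, so that the error survives the sup. A secondary but substantial difficulty, shouldered by the earlier part of the paper, is the Parisi-type formula for $\mathcal{P}(d)$ itself, which requires extending the Guerra upper bound and the Panchenko ultrametricity-based lower bound to the vector-valued (Potts), inhomogeneous, and constrained setting, and then performing the zero-temperature limit to extract the ground state $\mathcal{P}(d)$.
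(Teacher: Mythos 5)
Your proposal is correct in its overall architecture and matches the paper's strategy (typical-cut term plus a Gaussian-comparison step, with the fluctuation evaluated by the constrained Parisi formula and its zero-temperature limit, Corollary \ref{cor:ground_state}), but the order of the two main operations is reversed, and this is exactly where your ``principal obstacle'' comes from. The paper does \emph{not} perform a Lindeberg comparison constraint-by-constraint: in Lemma \ref{lemma:comp} it applies the universality theorem of \cite[Theorem 1.1]{sen2016optimization} once, to the full unconstrained $\max_{\sigma\in[\kappa]^N}$, replacing $\E[\mqcut(G_N)/N]$ by the Gaussian surrogate $\E[\tilde Z_N]$ up to $o(\sqrt c)$; only afterwards, entirely within the Gaussian model, does it stratify by block-wise proportions (Lemma \ref{lemma:exp}). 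There the lower bound follows by restricting to $\Sigma_N^{\ve_N}(d)$ for a fixed $d$ and invoking Corollary \ref{cor:ground_state}, while the upper bound is obtained not by continuity in $d$ but by a union bound over the polynomially many feasible proportion vectors together with Gaussian concentration of each $\tilde Z_N(d)$ around its mean, which gives the needed uniformity essentially for free. Your route—interpolating the constrained free energies $\log Z_\beta(d)$ uniformly over $d\in\sD_N$—should also work, since Guerra--Toninelli/Lindeberg interpolation bounds depend only on the moments of the couplings and not on the subset of configurations, so uniformity over $d$ is not a fundamental difficulty; but as written you flag it without resolving it, and you would additionally need to handle the $\bone(\sigma_i\neq\sigma_j)=1-\bone(\sigma_i=\sigma_j)$ bookkeeping and the exchange of the $\beta\to\infty$ and $N\to\infty$ limits uniformly in $d$. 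The paper's ordering buys a shorter argument by outsourcing universality to a ready-made unconstrained theorem; your ordering would buy a self-contained constrained universality statement, at the cost of redoing that interpolation under the proportion constraint.
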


\begin{remark}
We take this opportunity to comment on the positive definite assumption on the matrix $\mathbf{K}$. One prominent example where $\mathbf{K}$ is not positive definite is the random bipartite graph, where the kernel consists of two off-diagonal blocks. However, note that for $\kappa=2$, the behavior of the ${\sf{MAXCUT}}$ on this graph is trivial, and very different from that established in Theorem \ref{thm:discretization}. 
\end{remark}
\noindent
Note that the leading term in Theorem \ref{thm:discretization} is a variational problem involving the empirical distribution of spins within each block. This variational problem has two terms: the first term governs the expected cut-size, while the second term, of order $\sqrt{c}$, governs the extra contribution which is attained by optimization. It remains to introduce  $\mathcal{P}(d)$. 
It turns out that $\mathcal{P}(d)$ is the limiting ground state energy of the 
 inhomogeneous Potts spin glass model, subject to constraints on the composition of spins within each block. We introduce this model in the rest of the section, and define the constant $\mathcal{P}(d)$ rigorously using a Parisi type formula for the limiting free energy.

\subsection{The Inhomogeneous Potts Model}

We consider a natural 
generalization of the Potts spin glass model that allows for inhomogeneous coupling interactions
between species. The configuration space for this model is  $\Sigma_N=[\kappa]^N$ for some $\kappa\geq2$. 
Let $\sS$ be the finite set in \eqref{eq:props}, each element of which is called a species. 
For each $N$, we are given a partition of $[N]$ indexed by the species as 
\[
[N]=\cup_{s\in\sS} I_s.
\]
We say that $i$ belongs to species $s$ if $i\in I_s$. Conversely, we denote by $s(i)$ the species to which $i$ belongs. 
Let $N_s = |I_s|$. Naturally, this quantity varies in $N$. To obtain a reasonable 
limiting structure, we assume that the proportions converge:
\begin{equation}\label{eq:proportion-def}
\rho_N^s = \frac{N_s}{N}\to\rho^s\in(0,1).
\end{equation}
 The Hamiltonian for this model, $H_N$, is the centered Gaussian process
 \begin{equation}\label{eq:ham-def}
 H_N(\sigma)=\frac{1}{\sqrt{N}} \sum_{i,j=1}^{N} g_{i,j} \1(\sigma_i = \sigma_j), 
 \end{equation}
 where $g_{i,j}$ are independent, centered Gaussian random variables with covariance 
 \begin{equation}
 \E g_{i,j}^2 = \Delta_{s,t}^2, \qquad s,t\in\sS, i\in I_s, j\in I_t.
 \end{equation}
We assume, following \cite{Barra2015,P1},  that the matrix
$\Delta := \Delta_{s,t}^2$ is symmetric and positive definite in $s$ and $t$.
Observe that if we define, for $\sigma^1,\sigma^2\in\Sigma_N$,
\begin{equation}
R_{1,2}^{s}(k, {k'})=\frac{1}{N_s} \sum_{i\in I_s} \1 (\sigma_i^1=k)\1 (\sigma_i^2=k')
\end{equation}
and define the $\kappa\times\kappa$ species overlap matrix
\begin{equation}\label{eq:overlap-def}
R_{1,2}^s=\bigl(R_{1,2}^s (k, {k'}) \bigr)_{k,k'\leq\kappa}
\end{equation}
then  $H_N$ has covariance
\begin{equation}
\Cov(\sigma^1,\sigma^2)= N\sum_{s,t\in\sS}\Delta_{s,t}^2\rho^s_N\rho_N^t(R_{1,2}^s,R_{1,2}^t)
\end{equation}
where $(\cdot,\cdot)$ denotes the Frobenius (or Hilbert-Schmidt) inner product. 

This model is an inhomogeneous extension of the Potts spin glass model, which has been studied extensively in the physics literature \cite{EldSher83b,EldSherr83a, nishimori1983gauge,caltagirone2012dynamical}, and analyzed rigorously in \cite{P2, PVS}. In particular, we break the symmetry between sites. 
When $\kappa=2$, this type of inhomogeneity was introduced in an equivalent form by Barra, Contucci, Mignone and Tantari 
in \cite{Barra2015} where a Guerra-type \cite{Guerra} upper bound for the free energy was obtained. 
The matching lower bound was obtained by Panchenko in \cite{P1}.

Our goal is  to compute constrained free energies of the type
\begin{equation}\label{eq:const-free-def}
F_N(A) = \frac{1}{N}\E \log \sum_{\sigma\in A} e^{\beta H_N(\sigma)}
\end{equation}
for a specific choices of $A$.  Recall the space of proportions $\sD$ defined in \eqref{eq:props}. 
Given a  $d \in \sD$, we have the associated \emph{constrained state space}
\begin{equation}
\Sigma^\epsilon_N(d) = \Big\{ \sigma \in \Sigma_N \mathrel{}\Big|\mathrel{} \sum_{i \in I_s} \frac{\1(\sigma_i = k)}{N_s} \in [d^s_k - \epsilon, d_k^s + \epsilon] \Big\}. \label{eq:constrainedspace}
\end{equation}
We will use the notation $\Sigma_N(d) := \Sigma_N^0(d)$ to denote the constrained state space where the proportions of spins
within species are exactly equal to the proportion $d$. Let $\sD_N \subset \sD$ be the space of feasible constraints for configuration spaces of $N$ coordinates
\begin{equation}\label{eq:feasible-constraint-space}
\sD_N = \{ d \in \sD \mathrel{}\mid\mathrel{} \Sigma_N(d) \neq \emptyset \}.
\end{equation}
We are particularly interested in  computing constrained free energies as in \prettyref{eq:const-free-def} with 
$A=\Sigma(d)$.
Note that since the space of feasible configurations is at most polynomial growth,
classical concentration arguments show that the free energy of the whole system,
$F_N\bigl(\Sigma_N\bigr)$, is asymptotically given by
the maximum of $F_N\bigl(\Sigma_N(d)\bigr)$ over $\sD$. 

As in the Potts spin glass model, among others, the overlap  \prettyref{eq:overlap-def} will play a key role. 
In particular, we will find that the array of overlaps from i.i.d. draws of configurations from (a perturbation of) the Gibbs measure,
will be determined by a path $\pi:[0,1]\to \Gamma_\kappa^{\abs{\sS}}$. Here $\Gamma_\kappa$ is the 
space of $\kappa \times \kappa$ positive definite matrices and $\pi$ is effectively the family of quantile 
transform of the limiting law of the overlap of two independent copies $\sigma^1,\sigma^2$ from the Gibbs measure.

 We now turn to the main result. To this end, we denote 
the space of \emph{left-continuous monotone functions} on $\Gamma_\kappa$ as 
\begin{equation}
\Pi = \left\{ \pi: [0,1] \to \Gamma_\kappa : \text{$\pi$ is left-continuous, $\pi(x) \leq \pi(x')$ for $x \leq x'$} \right\},
\end{equation}
where $\pi(x) \leq \pi(x')$ means that $ \pi(x') - \pi(x)  \in \Gamma_\kappa$. Similarly, we let
\begin{equation}
\bm\Pi =\left\{ (\pi^s)_{s \in \sS} :[0,1]\to\Gamma_\kappa^{\abs{\sS}} \mathrel{}\big|\mathrel{} \pi^s\in\Pi\right\}.
\end{equation}

We also have the following metric on $\bm \Pi$ 
\begin{equation}
\Delta(\bm \pi, \bm {\tilde \pi}) = \int_0^1 \max_{s \in \sS} \| \pi^s(x) - \tilde\pi^s(x) \|_1 \, \de x. 
\end{equation}

For $d \in \sD$ and $r > 0$ we define the following sequences of parameters. Let $(x_i)_{i=1}^r$ be a strictly increasing sequence of numbers
\begin{equation}\label{eq:RPC6}
0=x_{-1}<x_{0}<\ldots<x_{r}=1.
\end{equation}
For each species, let $(Q^s_i)_{i=1}^r$ be an increasing sequence of $\kappa \times \kappa$ positive semi-definite matrices
\begin{equation}\label{eq:RPC5}
0=Q_{0}^{s}\leq Q_{1}^{s}\leq\ldots\leq Q_{r}^{s}= \operatorname{diag}(d_{1}^{s},\ldots,d_{\kappa}^{s})=D^{s}.
\end{equation}

Given these sequences, for each species we can define the Gaussian vector $(z_p^s) \in \R^{\kappa}$ such that
\begin{equation}
\E z_p^s (z_q^s)^T = 2 \delta_{p,q} \bigg( \sum_{t \in \sS} \Delta_{s,t}^2 \rho^t Q_p^t  - \sum_{t \in \sS} \Delta_{s,t}^2 \rho^t Q_{p-1}^t \bigg).
\end{equation}
 The non-random value $X_0^s$ is defined recursively as 
\[X_r^s = \log \sum_{k \leq \kappa} \exp \bigg( \sum_{1 \leq p \leq r} z_p^s(k) + \lambda^s_{k}  \bigg), \]
\begin{equation}\label{eq:recursion}
X^s_k = \frac{1}{x_k} \log \E_k \exp(x_k X^s_{k+1}), \text{ for }0 \leq k < r,
\end{equation}
where $\E_k$ denotes the expectation with respect to only $z_{k + 1}$. Finally, define the functional
\begin{multline}\label{eq:parisiformula}
\mathscr{P}(r, x, d, ((\lambda^s),  (Q^s))_{s\in \sS}) = \sum_{s \in \sS}   \rho^s X_0^s - \sum_{s \in \sS}\sum_{k \leq \kappa}\rho^s \lambda_k^s d^s_k \\- \frac{1}{2} \sum_{\ell = 0}^{r-1} x_{\ell} \sum_{s,t \in \sS} \Delta^2_{st} \rho^s \rho^t \left( (Q^s_{\ell+1},Q^t_{\ell+1}) - (Q^s_{\ell},Q^t_{\ell}) \right).
\end{multline}
The next result characterizes the limiting free energy in these models. 
\begin{theo}\label{thm:ParisiUnConstrained}
For any $\kappa \geq 2$, set of species $\sS$, and sequences $\rho_N^s\rightarrow \rho^s$ 
we have the following:
\begin{enumerate}
\item For any $d \in \sD$ and $\epsilon_N \to 0$ sufficiently slowly, the limit of the constrained free energy is given by
\begin{equation}\label{eq:ParisiConstrained}
\lim_{N \to \infty} F_N\bigl(\Sigma^{\epsilon_N}_N(d) \bigr) =  \inf_{x, r,(\lambda^s, Q^s)_{s \in \sS}} \sP\bigl(r, x, d,(\lambda^s, Q^s)_{s \in \sS} \bigr).
\end{equation}
\item The limit of the unconstrained free energy is given by
\begin{equation}\label{eq:ParisiUnConstrained}
\lim_{N \to \infty} F_N\bigl(\Sigma_N\bigr) = \sup_{d \in \sD} \, \inf_{x, r, (\lambda^s,  Q^s)_{s \in \sS}} \sP\bigl(r, x, d, (\lambda^s, Q^s)_{s \in \sS} \bigr).
\end{equation}
\end{enumerate}
\end{theo}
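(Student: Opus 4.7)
For part (1) I would establish a matching upper and lower bound on the constrained free energy by adapting the Guerra--Talagrand and Panchenko paradigm to the matrix-valued species overlaps of this Potts model.

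\emph{Upper bound (Guerra--Talagrand interpolation).} Fix $d\in\sD$, a depth $r$, parameters $(x_i)$, monotone matrix sequences $(Q^s_i)$ and Lagrange multipliers $(\lambda^s_k)$. Build a Ruelle Probability Cascade over the tree indexed by $(x_i)$, and on it define species-dependent Gaussian fields with covariance matching the increments $Q^s_{\ell+1}-Q^s_\ell$ weighted by $\Delta^2_{s,t}\rho^t$. Introduce an interpolating Hamiltonian that mixes $H_N$ with this cascade field, and tilt by $\sum_{s,k}\lambda^s_k\sum_{i\in I_s}\1(\sigma_i=k)$ so that the integration is effectively restricted to proportions near $d$ as $\epsilon_N\to 0$. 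Differentiating the interpolated free energy in $t$ and applying Gaussian integration by parts gives a derivative built out of Frobenius pairings $(R^s_{1,2},R^t_{1,2})$ tested against $(Q^s_{\ell+1}-Q^s_\ell,Q^t_{\ell+1}-Q^t_\ell)$. Positive-definiteness of $\Delta$ together with monotonicity of the $Q^s_\bullet$ in the cone $\Gamma_\kappa$ forces the derivative to have a sign, yielding
\begin{equation*}
F_N\bigl(\Sigma_N^{\epsilon_N}(d)\bigr) \leq \sP\bigl(r,x,d,((\lambda^s),(Q^s))_{s\in\sS}\bigr) + o(1)
\end{equation*}
uniformly over the parameters, so that taking the infimum gives one direction in \eqref{eq:ParisiConstrained}.

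\emph{Lower bound (Aizenman--Sims--Starr with synchronization).} Rewrite $F_N(\Sigma_N^{\epsilon_N}(d))$ as a telescoping cavity difference $F_{N+1}-F_N$ plus a vanishing remainder. Add a small mixed $p$-spin Ghirlanda--Guerra perturbation compatible with the species structure; this does not change the limit but in the thermodynamic limit forces the array of species overlap matrices $(R^s_{\ell,\ell'})_{\ell,\ell'\in\N,s\in\sS}$ to satisfy the GG identities, hence to be ultrametric and to admit Panchenko's tree representation over an RPC. The key step, specific to the matrix-valued inhomogeneous Potts case, is \emph{synchronization}: the joint law of $(R^s_{1,2})_{s\in\sS}$ is supported on a single left-continuous monotone path $\bm\pi\in\bm\Pi$, i.e.\ every matrix entry and every species is a deterministic monotone function of the scalar cascade coordinate. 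Once synchronization is established, the ASS functional collapses into the Parisi functional $\sP$ evaluated at a suitable discretization of this path, and taking the infimum over discretizations gives the matching lower bound.

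\emph{Part (2): from constrained to unconstrained.} The feasible set $\sD_N$ has polynomial cardinality in $N$, and $F_N(\Sigma_N(d))$ concentrates around its mean with Gaussian tails of width $O(N^{-1/2})$ by standard Gaussian concentration applied to $\sigma\mapsto\beta H_N(\sigma)/\sqrt N$. A union bound over $\sD_N$ combined with a covering of $\sD$ by balls of radius $\epsilon_N$ then gives
\begin{equation*}
F_N(\Sigma_N) = \max_{d\in\sD_N} F_N\bigl(\Sigma_N(d)\bigr) + o(1),
\end{equation*}
and continuity of $d\mapsto\inf_{r,x,(\lambda^s,Q^s)}\sP$ (which follows from continuity of $\sP$ in each of its arguments, using that the constraint $Q^s_r=\operatorname{diag}(d^s)$ depends continuously on $d$) upgrades the maximum over $\sD_N$ to the supremum over $\sD$, completing \eqref{eq:ParisiUnConstrained}.

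\emph{Main obstacle.} The hardest ingredient is the synchronization of matrix overlaps across species. Panchenko handled the scalar multi-species SK case using positive-definiteness of $\Delta$, and the homogeneous Potts case using a separate argument for matrix entries. Here one must perform both simultaneously: synchronize all $\kappa(\kappa+1)/2$ entries of each $R^s_{1,2}$ jointly across species $s\in\sS$ against a single scalar on the cascade tree, constrained by the diagonal $\operatorname{diag}(d^s)$. This is where positive-definiteness of $\Delta$ enters essentially, and the bulk of the technical work will be adapting Panchenko's matrix-synchronization machinery to the inhomogeneous species setting.
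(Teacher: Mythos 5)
Your plan is essentially the paper's own proof: a Guerra-type interpolation over a Ruelle probability cascade with Lagrange multipliers dualizing the proportion constraint for the upper bound, an Aizenman--Sims--Starr scheme with a species-compatible Ghirlanda--Guerra perturbation and a synchronization theorem (obtained by combining Panchenko's multispecies and Potts arguments, so that each $R^s_{\ell,\ell'}$ is a deterministic Lipschitz monotone function of a single scalar, the averaged trace) for the lower bound, and Gaussian concentration plus the polynomially many feasible constraints for part (2). The only ingredient your sketch glosses over is how the multipliers $\lambda^s$ reappear in the lower bound, which the paper handles by a decoupling/Legendre-duality step (concavity of the constrained cascade functional) to remove the constraint on the cavity coordinates; this is a technical sub-step within the same strategy rather than a different route.
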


Before moving forward, we explain the non-trivial obstacles encountered in the inhomogeneous Potts model. First, the symmetry between sites is broken. As a result, interactions both within the species and between the species must be considered. Secondly, the natural overlap structure are matrices \eqref{eq:overlap-def} and are a priori not necessarily positive definite in the limit.

Each of these issues has been studied in the past in \cite{P1} and \cite{P2}. The synchronization property in \cite{P1} connected the species overlaps with the average of the overlaps over the entire system. Similarly, the synchronization property in \cite{P2} proved the overlaps concentrated on the space of Gram matrices in the limit. Another consequence of this result implied the overlap matrices could also be recovered from the trace of the matrix. At the heart of both of these synchronization arguments were generalized Ghirlanda-Guerra type identities that implied an ultrametric underlying structure of the overlaps \cite{PUltra}. The synchronization combines local and global ultrametric properties forcing a rigid distribution structure.

We prove an analogue of the Ghirlanda-Guerra identities which combines those in the inhomogeneous SK and Potts models. This results in a simultaneous synchronization mechanism of the overlap matrices both within and between species. In our setting, we will be able recover the structure of the overlap matrices $R^s_{\ell, \ell'}$ deterministically from the average of the traces of overlaps
\[
\sum_{s \in \sS} \rho^s \tr \bigl( R^s_{\ell, \ell'} \bigr).
\]
These techniques will reduce the problem to a familiar setting, allowing us to derive a formula for the free energy using the Guerra upper bound \cite{Guerra} and the Aizenman-Sims-Starr scheme \cite{AS2} using the characterization method introduced in \cite{AA} and formalized further in \cite{Pspins,PPF}. The resulting functional order parameter is a vector of monotone paths of $\kappa \times \kappa$ matrices. 

In applications, we will be interested, not only in the free energy, but also the maximum of 
\eqref{eq:ham-def} subject to the same constraints.
 To understand the connection between the two, we recall the classical fact from statistical mechanics 
 that the ground state energy of a system can be obtained as the ``zero temperature limit'' of the free energy of this system--- 
 a limit usually referred to as annealing \cite{kirkpatrick1983optimization}. In this case, we take the beaten path, and define free energies of the form
\begin{equation}
F^\beta_N(A) = \frac{1}{N}\E \log \sum_{\sigma\in A} e^{\beta H_N(\sigma)}, \label{eq:free_energy_beta}
\end{equation}
where $H_N$ is as in \eqref{eq:ham-def} for some fixed $\Delta$. Here $\beta$ is the inverse temperature, and the zero temperature limit corresponds to sending $\beta \to \infty$. 
We note that the free energy \eqref{eq:free_energy_beta} corresponds to the Hamiltonian $H_N(\sigma)$ defined as in \eqref{eq:ham-def}, with respect to $\Delta^\beta=\beta^2 \Delta$ instead of $\Delta$. Therefore, a straightforward modification of Theorem \ref{thm:ParisiUnConstrained} implies that for any $d \in \sD$, 
\begin{equation}
\lim_{N \to \infty} F^\beta_N\bigl(\Sigma^{\epsilon_N}_N(d) \bigr) =  \inf_{x, r,(\lambda^s, Q^s)_{s \in \sS}} \sP_\beta\bigl(r, x, d,(\lambda^s, Q^s)_{s \in \sS} \bigr).
\end{equation}
The functional $\sP_\beta(r, x, d, (\lambda^s, Q^s))_{s \in \sS})$ is identical to \eqref{eq:parisiformula} with $\Delta^\beta$ in place of $\Delta$. As a corollary to the theorem above, we obtain an expression for the limiting constrained ground state energies. 

\begin{cor}
\label{cor:ground_state}
For any configuration $d \in \sD$, we have, as $N\to \infty$, for some sequence $\ve_N \to 0$ sufficiently slowly, 
\begin{align}
\lim_{N \to \infty} \frac{1}{N} \E\Big[ \max_{\sigma \in \Sigma_N^{\ve_N}(d)} H(\sigma) \Big] &= \lim_{\beta \to \infty} \frac{1}{\beta} \inf_{x, r,(\lambda^s, Q^s)_{s \in \sS}} \sP_{\beta} \bigl(r, x, d,(\lambda^s, Q^s)_{s \in \sS} \bigr) 
:= \mathcal{P} (d). \nonumber 
\end{align}
\end{cor}

\noindent Note that $\mathcal{P}(d)$ is the relevant constant for the $\mqcut$ in Theorem \ref{thm:discretization}. 
 Exact variational formulas for ground state energies like $\mathcal{P}(d)$ have been obtained recently in several models \cite{auffchen2017,chensen2017,JagTob17}. However, in this setting this remains an interesting question.

\subsection{Applications}
We return to study of graph partitioning problems in this section and study some examples . 
We first note that given a general kernel $K \in L^1$, to determine the $\mqcut$ up to $o(\sqrt{c})$ corrections, it is enough to restrict ourselves to block-constant kernels. To this end, given a kernel $K$ and $M \geq 1$, we construct the kernel $K_1$ by ``coarsening" the kernel $K$, 
\begin{align}
K_1(x,y) = M^2 \sum_{i,j=1}^{M} \bone \Big(x \in \Big[\frac{i-1}{M}, \frac{i}{M} \Big], y \in \Big[\frac{j-1}{M}, \frac{j}{M} \Big] \Big) \int_{\Big[\frac{i-1}{M}, \frac{i}{M}\Big]\times \Big[\frac{j-1}{M}, \frac{j}{M} \Big]} K(s,t ) \, \de s \de t.  
\end{align}

\noindent Let $\tilde{G}_N$ denote the sequence of graphs formed from the kernel $K_1$ using \eqref{eq:connection}.

Then we have, 
\begin{lem}
\label{lemma:discretization}
For any kernel $K$ and $c$, $0 <\delta <1/2 $, we can choose $M := M(c)$ such that for all $N$ sufficiently large, 
\begin{align}
\Big| \E\Big[ \frac{\mqcut(G_N)}{N} \Big]  - \E \Big[ \frac{\mqcut(\tilde{G}_N)}{N} \Big]  \Big| \leq c^{1/2- \delta}. \nonumber 
\end{align}
\end{lem}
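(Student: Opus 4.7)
The plan is to couple $G_N$ and $\tilde G_N$ edge by edge on a common probability space, use the deterministic fact that a single edge perturbation changes $\mqcut$ by at most one, and bound the expected size of the edge symmetric difference in terms of $\|K-K_1\|_{L^1}$; the latter can be made arbitrarily small by choosing $M$ sufficiently large depending on $c$.

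First, I will record the following deterministic inequality: for any two graphs $G,G'$ on vertex set $[N]$ and any partition $\sigma\in[\kappa]^N$, the cut sizes of $\sigma$ in $G$ and in $G'$ differ by at most $|E(G)\triangle E(G')|$, since each edge contributes at most one to any cut. Taking $\sigma$ optimal first for $G$ and then for $G'$ yields
\[
|\mqcut(G)-\mqcut(G')|\leq|E(G)\triangle E(G')|.
\]
Next, I couple the two random graphs by drawing, for each pair $\{i,j\}$, an independent $U_{ij}\sim\mathrm{Uniform}[0,1]$, and placing $\{i,j\}\in E(G_N)$ iff $U_{ij}\leq p_{ij}:=\min\{c\tilde K(i,j)/N,1\}$, and $\{i,j\}\in E(\tilde G_N)$ iff $U_{ij}\leq q_{ij}:=\min\{c\tilde K_1(i,j)/N,1\}$, writing $\tilde K_1$ for the $N$-block average of $K_1$. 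Since $z\mapsto\min\{z,1\}$ is $1$-Lipschitz,
\[
\E\bigl|E(G_N)\triangle E(\tilde G_N)\bigr|=\sum_{i<j}|p_{ij}-q_{ij}|\leq\frac{c}{N}\sum_{i<j}|\tilde K(i,j)-\tilde K_1(i,j)|.
\]
Writing $\tilde K(i,j)-\tilde K_1(i,j)=N^2\int_{B_{ij}}(K-K_1)\,\de x\de y$ for the $N$-block $B_{ij}$ and summing over the tiling of $[0,1]^2$ yields $\sum_{i,j}|\tilde K(i,j)-\tilde K_1(i,j)|\leq N^2\|K-K_1\|_{L^1}$. Combining the three ingredients,
\[
\frac{1}{N}\bigl|\E\mqcut(G_N)-\E\mqcut(\tilde G_N)\bigr|\leq\frac{c}{2}\|K-K_1\|_{L^1}.
\]

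Finally, $K_1$ is by construction the conditional expectation $\E[K\mid\mathcal{F}_M]$, where $\mathcal{F}_M$ is the $\sigma$-algebra of $M^{-1}$-squares in $[0,1]^2$; since $K\in L^1([0,1]^2)$, $L^1$-martingale convergence gives $\|K-K_1\|_{L^1}\to 0$ as $M\to\infty$. Hence for any fixed $c>0$ I can pick $M=M(c)$ with $\|K-K_1\|_{L^1}\leq 2c^{-1/2-\delta}$, which delivers the claimed bound $c^{1/2-\delta}$ uniformly in $N$. All ingredients are elementary: a Lipschitz bound for $\mqcut$, the maximal coupling of Bernoullis, and $L^1$-density of block-constant kernels. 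The only subtlety --- hardly an ``obstacle'' --- is to verify that the estimate is uniform in $N$, so that the choice of $M=M(c)$ does not need to depend on $N$.
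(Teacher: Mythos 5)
Your proposal is correct and follows essentially the same route as the paper: a maximal coupling of the edge indicators, the Lipschitz bound $|\min\{z,1\}-\min\{z',1\}|\leq|z-z'|$ to reduce to $\frac{c}{2}\|K-K_1\|_{L^1}$, and then choosing $M(c)$ so that this $L^1$ distance is at most of order $c^{-1/2-\delta}$. The only cosmetic point is that the block $\sigma$-algebras form a filtration only along divisibility chains (e.g.\ dyadic $M$), so either restrict to such $M$ or invoke the standard density-of-continuous-functions argument for $\|K-K_1\|_{L^1}\to 0$; this does not affect the conclusion.
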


\noindent 
Any kernel $K$ is naturally associated with the integral operator  $T_K (f ) (x) := \int K(x,y) f(y) \de y$. We assume that for $f \in L^\infty([0,1])$, $\iint f(x) f(y ) K(x,y) \de x \de y \geq 0$. We note that in this case, the corresponding discretized kernel $K_1$, described in Lemma \ref{lemma:discretization}, inherits the positive definite character. Therefore, $\mqcut(\tilde{G}_N)$ can be determined by an application of Theorem \ref{thm:ParisiUnConstrained}. We now turn to some examples to which our results apply.

\begin{exam}[Finite species block model]
The first example concerns the simple case when the kernel $K$ has an explicit block structure. This model has been proposed
and studied intensely by S\"{o}derberg \cite{soderberg2002inhomogeneous} and Bollobas, Janson and Riordan \cite{BJR2007inhomogeneous}. These models have also been studied as ``Stochastic Block Models" in Statistics, Machine Learning, and Theoretical Computer Science in connection to the community detection problem \cite{decelle2011sbm,massoullie2014sdp,mossel2013sbm}. Our results apply directly to this model in case the kernel is positive semidefinite. 
\end{exam}

\begin{exam}[Rank 1 model]
The next example concerns the Rank 1 model for random graphs. In this model, we have a function $\psi : [0,1] \to \mathbb{R}^+$ such that 
$K (x,y) = \psi(x) \psi(y)$. $\psi(x)$ governs the ``activity" of the vertex and the probability of a connection is determined by the product of the activities of the two vertices. This model has been extensively studied, see, e.g.,\cite{BDMl2006degree,chunglu2002degree,norrosreittu2006degree}. Prominent features of interest include the existence and size of a giant component, the degree distribution, the typical distances between the vertices etc. We refer to \cite[Section 16]{BJR2007inhomogeneous} for an extensive survey of the related family of models and connections to earlier results. The kernel is positive semidefinite in this case. Further, $\int K < \infty$ whenever
$\psi \in L^1$.  In this case, the kernel is in our framework and our result applies. 

We note that for this example, if $\psi>0$ is constant on blocks, then we reduce to the example of block kernels discussed earlier. The approximation scheme for this example basically approximates the function $\psi$ by a piecewise constant function on $[0,1]$. From an algorithmic viewpoint, our result says that for evaluating the first order correction term, we can coarsen the model to a setup where there are finitely many species with the same activity. 
\end{exam}

\begin{exam}[Dubins's model]
Consider the Dubins kernel $K(x,y) = \frac{1}{\max\{x,y\}}$.  Observe that $K\in L^1$, and is symmetric and positive definite as a function so that our results apply. (The associated integral operator is a bounded operator from $L^2([0,1])$ to $L^2([0,1])$ as shown in \cite{BJR2007inhomogeneous}.) In this case, $\P[ \{i,j \} \in E_N ] = c/j \wedge 1$ for $j \geq i, c$. This corresponds to the situation where the graph is formed by a sequential addition of vertices, and the $j^{th}$ vertex joins to the existing vertices independently with probability $c/j$. This model is naturally inhomogeneous, in that the older vertices usually have higher degrees and play a crucial role in determining the structure of the graph. An infinite version of this model was introduced by Dubins in 1984, who wished to determine the critical $c$ such that the graph would have an infinite path (see \cite{kalikow88weiss, shepp89connectedness}). The critical constant $c=1/4$ was partially determined by Kalikow and Weiss \cite{kalikow88weiss} and finally determined by Shepp \cite{shepp89connectedness}. Durett \cite{durrett03rigorous} determined that $c=1/4$ is also the critical threshold for the emergence of a giant component in the finite graph. We refer the reader to \cite{BJR2007inhomogeneous} for a detailed survey of the model and related results. 
\end{exam}

\subsection*{Acknowledgements}
This research was conducted while A.J. was supported by NSF OISE-1604232 and J.K. was partially supported by
NSERC grant RGPIN-2015-04637. 
\subsection{Outline}
Before turning to the body of the paper, let us briefly outline the structure of the paper and the proof of 
the main results. Theorem \ref{thm:discretization} follows using the interpolation idea introduced in \cite{dembo2016extremal} and generalized in \cite{sen2016optimization}. Its proof, included in Section \ref{sec:cut_proofs}, compares the $\mqcut$ value on inhomogeneous random graphs to the ground state of the inhomogeneous Potts spin glass. 
The proof of Theorem \ref{thm:ParisiUnConstrained} follows the method outlined above.
The starting point of the proof is 
the characterization of a family of arrays that follow a natural generalization of the Ghirlanda-Guerra
identities \cite{GG} for this setting. This combines the synchronization mechanisms of \cite{P1,P2} and is included in 
 \prettyref{sec:characterization}. In \prettyref{sec:perturbation}, we construct a perturbation of the system that
does not affect the limiting free energy but allows us to use the derived invariance properties.
With these results in hand, we prove the upper bound in \prettyref{sec:Guerra-UB} using a Guerra-type interpolation
and the matching lower bound using an Aizenman-Sims-Starr scheme in \prettyref{sec:Aizenman-LB}.
Before proving the lower bound, we briefly study  the continuity of certain functionals
used in the lower bound  in \prettyref{sec:continuity}. 
\section{Invariant arrays and their Characterization}\label{sec:characterization}

In this section, we study an invariance property that combines the
multispecies and vector spin Ghirlanda-Guerra type identities \cite{P1, P2} for the limit points
of doubly infinite array of overlaps, \prettyref{eq:overlap-def}, of independent 
draws from the Gibbs measure. This will allow us to characterize
these limit points. 

Let $\cR_{\kappa}$
be the space of arrays of the form $R_{\ell,\ell'}^s$ such
that there is a collection of vectors $\bigl(v_{i}(\ell,s)\bigr)_{i,\ell\geq1,s\in\sS}$
in $\R^{\kappa}$ such that 
\begin{equation}\label{eq:arraydefinition}
R_{\ell,\ell'}^s=\sum_{i}v_{i}(\ell,s)\tensor v_{i}(\ell',s),
\end{equation}
and such that 
\begin{equation}
R_{\ell,\ell'}^s(e_{k},e_{k})\in[0,1]\quad\forall k\in[\kappa].
\end{equation}
We equip $\cR_{\kappa}$ with the induced topology from the product
topology on a countable product of $[0,1]$ with itself so that, in particular, it is compact Polish. 
Observe that the overlap array \prettyref{eq:overlap-def} is in $\cR_{\kappa}$ for each $N$. 

We now introduce the aforementioned invariance property. For any $m,p\geq1$, $(\nu_{s}^{k})_{s\in\sS,k\in m}\in\R^{\kappa}$, and $\phi:\R^{m}\to\R$,
let
\begin{equation}
\mathcal{Q}_{\ell,\ell'}=\varphi\left( \bigl[ \bigl((R_{\ell,\ell'}^s) ^{\circ p}\nu_{1}^{s},\nu_{1}^{s} \bigr),\dots,\bigl( (R_{\ell,\ell'}^s)^{\circ p}\nu_{m}^{s},\nu_{m}^{s} \bigr) \bigr]_{s \in \sS} \right).
\end{equation} 
The $(R_{\ell,\ell'}^s)^{\circ p}$ term appearing above is the Hadamard $p^{\mathrm{th}}$ power of $R_{\ell,\ell'}^s$. We say that a random variable with values in $\cR_{\kappa}$ is weakly
exchangeable if 
\begin{equation}
R_{\pi_{s}(\ell)\pi_{s}(\ell')}^s\eqdist R_{\ell,\ell'}^s
\end{equation}
for all collections $(\pi_{s})$ of permutations of $\N$ of finitely
many coordinates. We say that a random variable in $\cR$ is $IP$-invariant
if for all $n\geq2$, bounded $f$, and choice of $\mathcal{Q}$ as above, we
have 
\begin{equation}
\E f(R^{n})\mathcal{Q}_{1,n+1}=\frac{1}{n}\E f(R^{n})\E \mathcal{Q}_{1,2}+\frac{1}{n}\sum_{\ell=2}^{n}\E f(R^{n})\mathcal{Q}_{1,\ell},\label{eq:MSPGGI}
\end{equation}
where $R^{n}$ is the array $(R_{\ell,\ell'}^s)_{\ell,\ell'\in[n],s\in\sS}$.

We now turn to the main result of this section. Recall that $\rho^{s}$
is a probability measure on $\sS$, which we denote equivalently by
$\de\rho$. Let 
\begin{equation}
\bar{R}_{\ell,\ell'}=\int R_{\ell,\ell'}^s\,\de\rho.
\end{equation}
Our goal is to prove that random arrays in $\cR$ that satisfy \prettyref{eq:MSPGGI}
synchronize, in the sense that $R_{\ell,\ell'}^s$ is actually a Lipschitz
function of the trace. 
\begin{theo}
\label{thm:IP-characterization}Suppose that $R$ is a $\cR_\kappa$-valued
random variable that is $IP$-invariant and weakly exchangeable. Then
there are deterministic, Lipschitz functions $\Psi_{s}$, depending
on the law of $R$, such that 
\begin{equation}
R_{\ell,\ell'}^s =\Psi_{s} \bigl(\tr (\bar{R}_{\ell,\ell'}) \bigr)
\end{equation}
almost surely. 
\end{theo}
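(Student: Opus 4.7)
My plan is to deduce Theorem~\ref{thm:IP-characterization} by combining the two synchronization mechanisms developed by Panchenko for the multispecies Sherrington-Kirkpatrick model in \cite{P1} and for the vector-spin Potts model in \cite{P2}, upgrading them simultaneously to the matrix-valued multispecies setting considered here.

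The first step is to translate \eqref{eq:MSPGGI} into a rich family of classical scalar Ghirlanda-Guerra identities. Specializing $\mathcal{Q}_{\ell,\ell'}$ to depend only on a single quadratic form $a^{s,\nu,p}_{\ell,\ell'} := ((R^s_{\ell,\ell'})^{\circ p} \nu, \nu)$ (for $\nu \in \R^\kappa$, $s\in\sS$, $p \geq 1$) and letting $f$ range over bounded measurable functions of $R^n$ yields the usual GG identity for each scalar observable $a^{s,\nu,p}$. Allowing $\mathcal{Q}$ to be a simultaneous function of several such observables upgrades these to joint GG identities for the full vector-valued array $(a^{s,\nu,p}_{\ell,\ell'})$. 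Combined with weak exchangeability, this is enough to invoke Panchenko's extended ultrametricity theorem \cite{PUltra} for the joint array, so the entire collection of observables lives on a single ultrametric tree, with $\tr(\bar R_{\ell,\ell'})$ itself being (a quantile transform on) that tree.

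With this in hand, I would carry out two synchronization stages. Stage one, modelled on \cite{P2}, is intraspecies: since each $R^s_{\ell,\ell'}$ is positive semidefinite by the Gram representation \eqref{eq:arraydefinition}, polarization of $a^{s,\nu,1}_{\ell,\ell'}$ recovers every matrix entry $R^s_{\ell,\ell'}(k,k')$ from a finite collection of the $a^{s,\nu,p}$; the Hadamard-power version and monotonicity along the ultrametric tree imply that each matrix entry is a non-decreasing Lipschitz function of the scalar $b^s_{\ell,\ell'} := \tr(R^s_{\ell,\ell'})$. Stage two, modelled on \cite{P1}, is interspecies: specializing the test functions so that $\mathcal{Q}_{\ell,\ell'}$ is linear in $\sum_s \rho^s b^s_{\ell,\ell'} = \tr(\bar R_{\ell,\ell'})$ forces the joint law of $(b^s_{\ell,\ell'})_{s\in\sS}$ to concentrate on a monotone curve in $\R^{\abs{\sS}}$, and hence expresses each $b^s_{\ell,\ell'}$ as a Lipschitz function of $\tr(\bar R_{\ell,\ell'})$. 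Composing the two stages produces the desired Lipschitz maps $\Psi_s$.

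The main obstacle I anticipate is ensuring that the two synchronization steps are compatible: the moduli of continuity produced by the intraspecies step must not degrade when composed with the interspecies step, and the dense countable family of test vectors $\nu$ used to separate matrix entries must be chosen uniformly in $s$. In \cite{P1,P2} each mechanism is treated in isolation, so the novelty is bundling them into a single invariance statement strong enough to drive both reductions at once. Once compatibility is established, the uniform bound $R^s_{\ell,\ell'}(e_k,e_k) \in [0,1]$ built into the definition of $\cR_\kappa$ and the positivity of the proportions $\rho^s$ yield the global Lipschitz constants for $\Psi_s$.
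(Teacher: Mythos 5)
Your proposal follows essentially the same route as the paper: specialize the $IP$-invariance \eqref{eq:MSPGGI} to test functions of a single species to invoke the Potts synchronization of \cite{P2} (giving $R^s_{\ell,\ell'}$ as a Lipschitz monotone function of $\tr(R^s_{\ell,\ell'})$), specialize it to functions of the trace array to invoke the multispecies synchronization of \cite{P1} (giving $\tr(R^s_{\ell,\ell'})$ as a Lipschitz function of $\tr(\bar R_{\ell,\ell'})$), and compose the two maps. The compatibility issue you worry about does not actually arise, since both stages are used as black boxes and a composition of Lipschitz functions is Lipschitz, which is exactly how the paper concludes with $\Psi_s=\Theta^s\circ L_s$.
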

The proof of this result is essentially by composition of the synchronization
theorems from \cite{P1,P2}. 
\begin{lem}
Suppose that $R$ in $\cR$, is weakly exchangeable and $IP$-invariant.
Then there exists deterministic, Lipschitz functions, depending on
the law of $\tr (\bar{R}_{\ell,\ell'})$, such that 
\begin{equation}
\tr(R_{\ell,\ell'}^s)=L_{s}\bigl(\tr (\bar{R}_{\ell,\ell'}) \bigr)
\end{equation}
\end{lem}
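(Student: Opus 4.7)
The plan is to reduce this scalar synchronization statement to the multispecies synchronization theorem of \cite{P1} by extracting a scalar Ghirlanda--Guerra identity for the trace arrays from the matrix-valued $IP$-invariance. To that end, set $a^s_{\ell,\ell'} := \tr(R^s_{\ell,\ell'})$ and $\bar a_{\ell,\ell'} := \tr(\bar R_{\ell,\ell'}) = \sum_{s\in \sS} \rho^s a^s_{\ell,\ell'}$.

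First I would specialize \eqref{eq:MSPGGI} to obtain the scalar multispecies Ghirlanda--Guerra identities of \cite{P1} for the joint array $\bigl((a^s)_{s\in\sS}, \bar a\bigr)$. Taking $p = 1$, $m = \kappa$, and $\nu_k^s = e_k$ (the $k$-th standard basis vector of $\R^\kappa$) for all $k,s$, one computes $((R^s_{\ell,\ell'})^{\circ 1} e_k, e_k) = R^s_{\ell,\ell'}(k,k)$, so that the tuple fed into $\phi$ is $\bigl(R^s_{\ell,\ell'}(k,k)\bigr)_{k\leq\kappa,\, s\in \sS}$. Choosing $\phi$ to depend only on $\sum_k x^{s^*}_k$ for a fixed $s^*\in\sS$ produces $\mathcal{Q}_{\ell,\ell'} = g(a^{s^*}_{\ell,\ell'})$ for any bounded $g$, while choosing $\phi = g\bigl(\sum_{s}\rho^s\sum_k x^s_k\bigr)$ produces $\mathcal{Q}_{\ell,\ell'} = g(\bar a_{\ell,\ell'})$. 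Approximating arbitrary bounded measurable tests of $(a^s, \bar a)_{\ell,\ell' \le n}$ by polynomials in such quantities (via Stone--Weierstrass on the compact product space $\cR_\kappa$) upgrades \eqref{eq:MSPGGI} to the full scalar multispecies Ghirlanda--Guerra identity.

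Next I would check the structural hypotheses required by the synchronization theorem. Weak exchangeability of $(a^s)$ under simultaneous replica permutations is inherited immediately from that of $R^s$. Positive-semidefiniteness of $(a^s_{\ell,\ell'})_{\ell,\ell'}$ in the replica indices follows from the Gram representation \eqref{eq:arraydefinition}: setting $u(\ell,s) = \bigl(v_i(\ell,s)\bigr)_i$ in a suitable $\ell^2$-space, one has $a^s_{\ell,\ell'} = \langle u(\ell,s), u(\ell',s)\rangle$, and averaging in $s$ against $\rho$ yields the analogous Gram representation for $\bar a_{\ell,\ell'}$. Finally, the diagonal bounds $a^s_{\ell,\ell} \in [0,\kappa]$ follow from the constraint $R^s_{\ell,\ell}(e_k,e_k) \in [0,1]$. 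With these in hand, the multispecies synchronization theorem of \cite{P1} supplies deterministic Lipschitz maps $L_s$, depending only on the law of $\bar a_{1,2}$, such that $a^s_{\ell,\ell'} = L_s(\bar a_{\ell,\ell'})$ almost surely, which is the desired identity upon translating back.

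The main obstacle is the first reduction: $IP$-invariance is a single identity over a large class of matrix-based test functions, while \cite{P1} assumes scalar Ghirlanda--Guerra identities for all bounded tests on the scalar overlap array. One must verify carefully that the subclass of $\mathcal{Q}$'s obtained by the specialization $p=1$, $\nu^s_k = e_k$, inserted into both the ``$f(R^n)$'' position and the ``$\mathcal{Q}_{1,n+1}$'' position of \eqref{eq:MSPGGI}, generates (after density arguments) enough information to feed \cite{P1} verbatim; the combinatorial step of closing products of such test functions under the identity and then extending to all bounded measurable functions is where the argument is most delicate.
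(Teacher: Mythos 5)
Your argument is essentially the paper's proof: you specialize the $IP$-invariance \eqref{eq:MSPGGI} to tests that are functions of the trace arrays (the paper does this by taking $f$ and $\varphi$ to be functions of $T^s_{\ell,\ell'}=\tr(R^s_{\ell,\ell'})$), verify the Gram--De Finetti structure of the traces via the representation \eqref{eq:arraydefinition} (your $\ell^2$ concatenation is the paper's $V_\ell(s)=\sum_i v_i(\ell,s)\otimes e_i$ with the Hilbert--Schmidt inner product), and then invoke the multispecies synchronization theorem of \cite{P1}. The delicacy you flag at the end is not actually an obstacle: in the definition of $IP$-invariance the function $f$ is already an arbitrary bounded function of the full array $R^n$, so only the $\mathcal{Q}$-slot needs specializing, and your choice $p=1$, $\nu^s_k=e_k$ with $\varphi$ depending on the traces already produces the full class of scalar tests required in \cite{P1}.
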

\begin{proof}
We begin by observing that for any element of $\cR$, the array of
traces $\bigr(\tr (R_{\ell,\ell'}^s ) \bigr)_{\ell, \ell' \geq 1}$ is a Gram matrix for each $s$. To
see this simply observe that if we let

\[
V_{\ell}(s)=\sum_{i}v_{i}(\ell,s)\tensor e_{i}
\]
then 
\[
\tr (R_{\ell,\ell'}^s)=\left(V_{\ell}(s),V_{\ell'}(s)\right)_{HS}.
\]
Thus $T=\bigl(\tr (R_{\ell,\ell'}^s) \bigr)$ is a Gram-De Finetti array for each
$s\in\sS.$ Applying \eqref{eq:MSPGGI} with $f$ and $\varphi$ being
functions of $T$, we see that $T$ satisfies the Multispecies Ghirlanda-Guerra
Identities \cite[Eq. (36)]{P1}
\[
\E f(T^{n}) \mathcal{Q}_{1,n+1}=\frac{1}{n}\E f(T^n)\cdot\E \mathcal{Q}_{1,2}+ \frac{1}{n} \sum_{\ell=2}^{n}\E f(T^{n})\mathcal{Q}_{1,\ell}.
\]
Recall from \cite[Section 4]{P1} that for such arrays there exist
Lipschitz functions $L_{s}$, that depend on the law of $T^{n}$,
such that
\[
T_{\ell,\ell'}^s=L_{s}\bigg(\int T_{\ell,\ell}^s\,\de\rho\bigg)
\]
almost surely. Applying this result to our setting yields a family
of Lipschitz functions depending on the law of $\tr (\bar{R}_{\ell,\ell'}) $
such that 
\[
\tr (R_{\ell,\ell'}^s) =L_{s}\bigl(\tr (\bar{R}_{\ell,\ell'}) \bigr)
\]
almost surely, as desired.
\end{proof}
\begin{lem}
Suppose that $R$ in $\cR_\kappa$, is weakly exchangeable and $IP$-invariant.
Then there exist deterministic, Lipschitz, monotone functions $\Theta^{s}:\R_{+}\to\Gamma_{\kappa}$
which depend on the law of $R$ such that 
\[
R_{\ell,\ell'}^s=\Theta^{s}\bigl(\tr (R_{\ell,\ell'}^s) \bigr)
\]
almost surely.
\end{lem}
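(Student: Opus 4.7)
The plan is to reduce to the single-species vector-spin synchronization of Panchenko \cite{P2} by restricting the family of test functions appearing in the $IP$-invariance \eqref{eq:MSPGGI} to a single species at a time. Concretely, fix $s_0 \in \sS$. In the definition of $\mathcal{Q}_{\ell,\ell'}$ choose $\nu_k^s = 0$ for every $s \neq s_0$, and take $\phi$ to depend only on those arguments indexed by $s_0$. With these choices, $\mathcal{Q}_{\ell,\ell'}$ becomes an arbitrary bounded function of the quantities $\bigl( (R_{\ell,\ell'}^{s_0})^{\circ p} \nu_k^{s_0}, \nu_k^{s_0} \bigr)$, $k = 1,\dots,m$. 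Similarly, by taking $f$ to depend only on the sub-array $(R_{\ell,\ell'}^{s_0})_{\ell,\ell' \in [n]}$, the identity \eqref{eq:MSPGGI} specializes to the vector-spin Ghirlanda–Guerra identities of \cite[Section 2]{P2} for the species-$s_0$ array. Weak exchangeability of the full array immediately restricts to weak exchangeability of $R^{s_0}$, and membership of $R^{s_0}$ in the vector-spin overlap class follows from the representation \eqref{eq:arraydefinition}, which shows that each $R^{s_0}_{\ell,\ell'}$ is a Gram-type matrix in the vectors $v_i(\ell,s_0)$.

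With these ingredients in place, Panchenko's synchronization theorem for vector-spin models \cite[Theorem 4]{P2} provides, for each $s \in \sS$, a deterministic, Lipschitz, monotone function $\Theta^s : \R_+ \to \Gamma_\kappa$ depending only on the law of the species-$s$ sub-array, such that
\begin{equation}
R_{\ell,\ell'}^s = \Theta^s\bigl( \tr(R_{\ell,\ell'}^s) \bigr) \qquad \text{almost surely.} \nonumber
\end{equation}
Monotonicity here means that $\Theta^s$ maps the natural order on $\R_+$ to the cone order on $\Gamma_\kappa$, which is precisely what is needed for the functional order parameter to yield a well-defined path $\pi^s \in \Pi$ later in the paper. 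The Lipschitz constants of $\Theta^s$ are controlled by the joint distribution of the overlaps, a fact we will not need here beyond measurability.

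The main obstacle in executing this plan is checking that the specialization of \eqref{eq:MSPGGI} indeed matches the hypothesis of \cite{P2}, namely that enough test functions $\phi$ and vectors $\nu^{s_0}_k$ are allowed to generate the class of continuous functions of $R_{\ell,\ell'}^{s_0}$ used in the statement of the vector-spin identities. Because $m$, $p$, and the $\nu^{s_0}_k$ are arbitrary, polynomials in the entries of the Hadamard powers $(R_{\ell,\ell'}^{s_0})^{\circ p}$ tested against arbitrary vectors can be produced, and the Stone–Weierstrass argument used in \cite{P2} applies verbatim. Combined with the previous lemma, which writes $\tr(R_{\ell,\ell'}^s) = L_s(\tr \bar R_{\ell,\ell'})$ for Lipschitz $L_s$, one finally obtains
\begin{equation}
R_{\ell,\ell'}^s = \Theta^s \circ L_s\bigl( \tr \bar R_{\ell,\ell'} \bigr) =: \Psi_s\bigl( \tr \bar R_{\ell,\ell'}\bigr), \nonumber
\end{equation}
with $\Psi_s$ Lipschitz, proving \prettyref{thm:IP-characterization}.
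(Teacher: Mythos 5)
Your argument is correct and is essentially the paper's own proof: both fix a species $s$, specialize the $IP$-invariance \eqref{eq:MSPGGI} by taking $\mathcal{Q}$ and $f$ to depend only on the species-$s$ sub-array (your choice $\nu^s_k=0$ for $s\neq s_0$ is one concrete way to do this), and then invoke Panchenko's synchronization theorem for the Potts/vector-spin model in \cite{P2} to obtain the Lipschitz, monotone $\Theta^s$. The only cosmetic difference is the theorem number cited from \cite{P2} and the extra remarks on Stone--Weierstrass and the final composition with $L_s$, which belong to the proof of \prettyref{thm:IP-characterization} rather than to this lemma.
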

\begin{proof}
For fixed $s\in\sS$, we may apply \eqref{eq:MSPGGI}, with $\mathcal{Q}$ of
the form 
\[
\mathcal{Q}_{\ell,\ell'}=\varphi\Big(  \bigl((R_{\ell,\ell'}^s) ^{\circ p}\nu_{1}^{s},\nu_{1}^{s} \bigr),\dots,\bigl( (R_{\ell,\ell'}^s)^{\circ p}\nu_{m}^{s},\nu_{m}^{s} \bigr)  \Big).
\]
As a result, taking $f$ to be a function of this species as well,
we see that the array satisfies 
\[
\E f\bigl(R^{n}(s)\bigr)\mathcal{Q}_{1,n+1}=\frac{1}{n}\E f\bigl(R^{n}(s) \bigr)\E \mathcal{Q}_{1,2}+\frac{1}{n}\sum_{\ell=2}^{n}\E f\bigl(R^{n}(s)\bigr)\mathcal{Q}_{1,\ell},
\]
where $R^n(s) = (R^s_{\ell, \ell'})_{\ell, \ell' \in [n]}$. It was shown in \cite[Theorem 3]{P2} that for such arrays,
there is a Lipschitz, monotone function $\Theta^{s}$ depending on
the law of $R^{n}(s)$ such that 
\[
R_{\ell,\ell'}^s=\Theta^{s} \bigl( \tr (R_{\ell,\ell'}^s) \bigr)
\]
almost surely, as desired.
\end{proof}
\begin{proof}[\textbf{\emph{Proof of \prettyref{thm:IP-characterization}}}]
 Applying the previous two lemmas we obtain families $(\Theta^{s})_{s \in \sS}$ and $\left(L_{s}\right)_{s \in \sS}$. The result then follows by taking 
\[
\Psi_{s}=\Theta^{s}\circ L_{s}.
\]
\end{proof}

\section{Perturbation for Invariance}\label{sec:perturbation}
In this section, we show that after a small perturbation, the limiting overlap array will satisfy a generalized form of the Ghirlanda Guerra identities appearing in \cite{P1} and \cite{P2}. This argument is standard and can be safely skipped by the expert reader.
For completeness we include it here.  The key observation is that, as with the Potts model, it is crucial that we restrict ourselves to configurations with fixed proportions of states. 

Let $h_\theta(\sigma)$ be a Gaussian process with covariance
\begin{equation}\label{eq:CovPerturbed}
 C^\theta_{\ell, \ell'} = \Cov \bigl( h_\theta(\sigma^{\ell})h_\theta(\sigma^{\ell'}) \bigr)= \prod_{s \in \sS} \prod_{j \leq m} \Big(   \bigl(( R_{\ell, \ell'}^s)^{\circ p } \nu_j^s, \nu_j^s \bigr) \Big)^{n_j^s}.
\end{equation}
where $\theta = (m , p_s, n_1^s, \dots, n_m^s, \nu_1^s, \dots, \nu_m^s)_{s \in \sS}$ are the parameters in the covariance. The Gaussian process $h_\theta(\sigma)$ can be constructed explicitly using a similar construction as in \cite[Section 5]{P2}. We will provide a brief non-constructive existence proof here.
\begin{lem}
	The covariance structure $C^\theta_{\ell, \ell'}$ is positive semidefinite for all $(R^s_{\ell,\ell'})_{s \in \sS} \in \cR_{\kappa}$.
\end{lem}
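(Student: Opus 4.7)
The plan is to verify positive semidefiniteness by exhibiting each factor of the covariance as a Gram matrix kernel and then invoking the Schur product theorem, which asserts that entrywise products of positive semidefinite matrices are positive semidefinite.

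First I would use the defining Gram representation \eqref{eq:arraydefinition}, $R^s_{\ell,\ell'} = \sum_i v_i(\ell,s) \otimes v_i(\ell',s)$, to rewrite the $(k,k')$-entry of $R^s_{\ell,\ell'}$ as $\langle u_\ell(s,k), u_{\ell'}(s,k')\rangle_H$ for appropriate vectors $u_\ell(s,k)$ in a Hilbert space $H$ (concretely $u_\ell(s,k)(i) = v_i(\ell,s)(k)$). The Hadamard power then inherits a Gram representation via tensor products:
\begin{equation*}
\bigl((R^s_{\ell,\ell'})^{\circ p}\bigr)(k,k') = \langle u_\ell(s,k), u_{\ell'}(s,k')\rangle_H^{p} = \bigl\langle u_\ell(s,k)^{\otimes p}, u_{\ell'}(s,k')^{\otimes p}\bigr\rangle_{H^{\otimes p}}.
\end{equation*}

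Next I would evaluate the quadratic form against $\nu_j^s$ and observe that it collapses to an inner product:
\begin{equation*}
\bigl((R^s_{\ell,\ell'})^{\circ p}\nu_j^s,\nu_j^s\bigr) = \Bigl\langle \sum_k \nu_j^s(k)\, u_\ell(s,k)^{\otimes p},\ \sum_{k'} \nu_j^s(k')\, u_{\ell'}(s,k')^{\otimes p}\Bigr\rangle_{H^{\otimes p}} = \langle W^{s,j}_\ell, W^{s,j}_{\ell'}\rangle,
\end{equation*}
where $W^{s,j}_\ell := \sum_k \nu_j^s(k)\, u_\ell(s,k)^{\otimes p}$. Thus the matrix $\bigl(\bigl((R^s_{\ell,\ell'})^{\circ p}\nu_j^s,\nu_j^s\bigr)\bigr)_{\ell,\ell'}$ is a Gram matrix, hence positive semidefinite.

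Finally I would take Hadamard (entrywise) products: raising a positive semidefinite matrix entrywise to a nonnegative integer power $n_j^s$ preserves positive semidefiniteness by iterating the Schur product theorem, and taking the entrywise product over $s\in\sS$ and $j\leq m$ likewise preserves it. This is exactly the definition of $C^\theta_{\ell,\ell'}$ in \eqref{eq:CovPerturbed}, so $C^\theta$ is positive semidefinite on every finite index set $\{\ell_1,\dots,\ell_N\}$, which is what was to be shown. (Once this is established, Kolmogorov extension produces the centered Gaussian process $h_\theta$, matching the explicit construction of \cite[Section 5]{P2}.) No step here looks hard; the only point that requires care is the tensor-power identification of the Hadamard power as a Gram matrix, which is the mechanism that drives everything else.
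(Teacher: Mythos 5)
Your proof is correct and follows essentially the same route as the paper: both reduce each factor $\bigl((R^s_{\ell,\ell'})^{\circ p}\nu_j^s,\nu_j^s\bigr)$ to a Gram array in $\ell,\ell'$ and then conclude via closure of positive semidefiniteness under Hadamard products. The only difference is that you spell out, via tensor powers, why the Hadamard power of a Gram-type array is again of Gram type, a step the paper dispatches with the remark that $(R^s_{\ell,\ell'})^{\circ p}\in\cR_\kappa$.
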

\begin{proof}
	Clearly if $R^s_{\ell,\ell'} \in \cR_{\kappa}$, then $( R_{\ell, \ell'}^s)^{\circ p } \in \cR_{\kappa}$ for all $p \geq 1$. By the definition on \eqref{eq:arraydefinition}, we can find some collection of vectors $\bigl(v_{i}(\ell,s)\bigr)_{i,\ell\geq1,s\in\sS}$ such that
	\[
	( R_{\ell, \ell'}^s)^{\circ p } = \sum_{i}v_{i}(\ell,s)\tensor v_{i}(\ell',s) = \sum_{i}v_{i}(\ell,s)  v^{\mathrm{T}}_{i}(\ell',s).
	\]
	Given $\nu^s_j \in \R^\kappa$, we have 	
	\begin{equation}\label{eq:gram}
	\bigl(( R_{\ell, \ell'}^s)^{\circ p } \nu_j^s, \nu_j^s \bigr)_{\ell, \ell' \geq 1} = \Bigl( \sum_{i}v_{i}(\ell,s)  v^{\mathrm{T}}_{i}(\ell',s) \nu_j^s, \nu_j^s \Bigr)_{\ell, \ell' \geq 1} = \sum_{i} \bigl( v_{i}(\ell',s)^{\mathrm{T}} \nu_j^s, v_{i}(\ell,s)^{\mathrm{T}} \nu_j^s \bigr)_{\ell, \ell' \geq 1}
	\end{equation}
	is a Gram array and hence positive semidefinite. Since Hadamard products preserves positivity,
	\[
	\prod_{s \in \sS} \prod_{j \leq m} \Big(   \bigl(( R_{\ell, \ell'}^s)^{\circ p } \nu_j^s, \nu_j^s \bigr) \Big)^{n_j^s}
	\]
	is positive semidefinite because it is the Hadamard product of finitely many arrays of the form \eqref{eq:gram}. Hence there exists a Gaussian process indexed with $\sigma^\ell$ with covariance given by \eqref{eq:CovPerturbed}.
\end{proof}

Let $\nu^s_j$ take rational values in $[-1,1]^\kappa$ and define the space of parameters
\begin{equation}
\Theta := \left\{ \theta : m,p, n_1^s, \dots, n_m^s \in \N, \nu_1^s, \dots, \nu_m^s \in \mathbb{Q} \cap [-1,1] \text{ for all $s \in \sS$} \right\}.
\end{equation} 
Since $\Theta$ is countable,  we can find a enumeration map $j(\theta) : \Theta \to \N$. Let $(u_\theta)_{\theta \in \Theta}$ be i.i.d uniform random variables on $[1,2]$ and let $(h_\theta)_{\theta \in \Theta}(\sigma)$ of be pairwise independent copies of $h_\theta$. Finally, define
\begin{equation}
h_N(\sigma) = \sum_{\theta \in \Theta} \frac{1}{2^{j(\theta)}} u_{\theta} h_{\theta} (\sigma).
\end{equation}

Let $d_N \in \sD_N$ be such that $d_N \to d \in \sD$, and consider the perturbed Gibbs measure on $\Sigma_N(d_N)$ given by
\begin{equation}\label{eq:Perturbed-Gibbs-Measure}
G^{pert}_{d_N} = \frac{\exp H_N^{pert} (\sigma)}{Z_N(d_N)}, ~H_N^{pert} = H_N(\sigma) + s_N h_N(\sigma), 
\end{equation}
where $\sigma \in \Sigma_N(d_N)$ and $s_N = N^\alpha$ for $1/4 < \alpha < 1/2$. We then have the following.
\begin{theo}\label{thm:MSPOTTSGG}
There is a choice of $(u_\theta)$ such that the following holds
\begin{itemize}
\item The perturbation is small in the sense that
\[
\lim_{N \to \infty}\abs{\frac{1}{N} \E \log \sum_{\sigma \in \Sigma_N(d)} \exp(H_N(\sigma))-\frac{1}{N} \E \log \sum_{\sigma \in \Sigma_N(d)} \exp(H^{pert}_N(\sigma))}=0
.\]
\item  If  $ R^N = (R^s_{\ell,\ell'})_{s \in \sS, \ell,\ell' \geq 1}$ is the overlap array drawn from $\E (G^{pert}_{d_N})^{\infty}$, then any weak limit point, $ R^\infty$, satisfies \eqref{eq:MSPGGI}. 
\end{itemize}
\end{theo}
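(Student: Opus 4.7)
The plan is to adapt the standard Ghirlanda-Guerra perturbation mechanism, in the form developed by Panchenko for the multispecies SK model in \cite{P1} and for the vector-spin (Potts) model in \cite{P2}. The covariance \eqref{eq:CovPerturbed} is simultaneously a multispecies and a vector-spin perturbation, so the two arguments merge cleanly: once the perturbed free energy is shown to concentrate suitably in each parameter $u_\theta$, the invariance \eqref{eq:MSPGGI} drops out of Gaussian integration by parts. The argument proceeds in three steps.

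\emph{Step 1 handles the first bullet.} The bound $|R^s_{\ell,\ell'}(e_k,e_k)| \leq 1$ together with $\nu_j^s \in [-1,1]^\kappa$ gives $|C^\theta_{\ell,\ell'}| \leq \kappa^{2m}$, so the weighted sum $h_N(\sigma)$ satisfies $\E h_N(\sigma)^2 \leq C$ uniformly in $\sigma$, thanks to the geometric factors $2^{-j(\theta)}$. A Gaussian interpolation between $H_N$ and $H_N + s_N h_N$ (or Jensen plus concentration) shows the free energies per site differ by $O(s_N/\sqrt N) = O(N^{\alpha - 1/2}) = o(1)$, uniformly over all $(u_\theta) \in [1,2]^\Theta$.

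\emph{Step 2 is the self-averaging of the multi-overlaps.} Fix $\theta$ and view $F_N^{pert}$ as a function of the scalar $u_\theta$, freezing the other coordinates. This map is convex (the Hamiltonian is affine in $u_\theta$) and uniformly Lipschitz on $[1,2]$, and Gaussian concentration gives $\E(F_N^{pert}(u_\theta) - \E F_N^{pert}(u_\theta))^2 = O(1/N)$. The standard convexity-plus-concentration lemma underpinning the Panchenko perturbation arguments (see \cite[Thm.~3.2]{P1}, \cite[Lem.~3.7]{P2}) upgrades this to the self-averaging statement
$$\E\bigl\langle \bigl|C^\theta_{1,2} - \E\langle C^\theta_{1,2}\rangle\bigr|\bigr\rangle \longrightarrow 0$$
for Lebesgue-a.e.\ $u_\theta \in [1,2]$. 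A diagonal (Fubini plus Borel-Cantelli) argument over the countable $\Theta$, absolutely convergent owing to the $2^{-j(\theta)}$ weights, produces a single deterministic sequence $(u_\theta)_{\theta\in\Theta} \in [1,2]^\Theta$ along which this holds simultaneously for every $\theta$. The range $1/4 < \alpha < 1/2$ is precisely what balances the $o(1)$ free-energy perturbation (requiring $\alpha < 1/2$) against the error terms in the convexity-to-derivative extraction (requiring $\alpha > 1/4$).

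\emph{Step 3 applies Gaussian integration by parts} to $h_\theta$:
$$\E\langle h_\theta(\sigma^1)\, f\rangle = \frac{s_N}{2^{j(\theta)}}\Bigl[\sum_{\ell=1}^n \E\langle C^\theta_{1,\ell}\, f\rangle - n\, \E\langle C^\theta_{1,n+1}\, f\rangle\Bigr],$$
valid for any bounded $f$ of $(\sigma^1,\ldots,\sigma^n)$. On $\Sigma_N(d_N)$ the self-overlap $R^s_{1,1} = \mathrm{diag}(d^s_1,\ldots,d^s_\kappa)$ is deterministic, so $C^\theta_{1,1}$ is a non-random constant; subtracting the same IBP applied with $f \equiv 1$ and invoking the self-averaging of Step 2 forces
$$\E\langle C^\theta_{1,n+1}\, f\rangle = \frac{1}{n}\E\langle f\rangle\, \E\langle C^\theta_{1,2}\rangle + \frac{1}{n}\sum_{\ell=2}^n \E\langle C^\theta_{1,\ell}\, f\rangle + o(1),$$
which is \eqref{eq:MSPGGI} for the rational monomial $\mathcal{Q}$ encoded by $\theta$. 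Density of $\mathbb{Q}\cap[-1,1]$ in $[-1,1]$, continuity of $C^\theta$ in $(\nu_j^s)$, and a Stone-Weierstrass approximation of $\varphi$ by polynomials extend the identity to all of \eqref{eq:MSPGGI}, and it is preserved under weak limits of the $\cR_\kappa$-valued overlap arrays. The main point of care — rather than a conceptual obstacle — is the simultaneous selection of $(u_\theta)$ in Step 2; the restriction to $\Sigma_N(d_N)$ is simplifying rather than obstructing, since it makes $R^s_{1,1}$ deterministic and removes the diagonal $\ell = 1$ term from the IBP identity.
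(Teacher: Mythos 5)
Your overall architecture is exactly the paper's: the paper proves this theorem by deferring to the standard perturbation argument of \cite[Chapter 3.2]{P3} (merged with the multispecies and vector-spin versions in \cite{P1,P2}), with the single model-specific observation that on the constrained space $\Sigma_N(d_N)$ the diagonal term $C^\theta_{\ell,\ell}$ is a non-random constant, which is what makes the integration-by-parts step close up. Your Step 1, the exponent bookkeeping $1/4<\alpha<1/2$, the integration by parts in Step 3 with the cancellation of the $C^\theta_{1,1}$ term, and the density/Stone--Weierstrass extension from rational monomials to general $\mathcal{Q}$ all match that route (your Step 1 bound $O(s_N/\sqrt N)$ is not tight --- Jensen gives $O(s_N^2/N)$ --- but both are $o(1)$, so no harm).

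There is, however, one genuine flaw as written: the self-averaging statement you extract in Step 2, namely $\E\bigl\langle\bigl|C^\theta_{1,2}-\E\langle C^\theta_{1,2}\rangle\bigr|\bigr\rangle\to 0$, is not what the convexity-plus-concentration lemma gives, and it is false in general --- concentration of the overlap functional would mean the limiting overlap distribution is a point mass, in which case \eqref{eq:MSPGGI} would be vacuous; the whole point of the Ghirlanda--Guerra mechanism is that the overlaps do \emph{not} concentrate in the low-temperature phase. What the convexity of the free energy in $u_\theta$, its Lipschitz bound, and Gaussian concentration actually yield (and what your Step 3 in fact needs, to pass from $\E\langle h_\theta(\sigma^1)f\rangle$ to $\E\langle h_\theta\rangle\,\E\langle f\rangle$) is concentration of the Gibbs average of the field itself, $\E\langle|h_\theta(\sigma)-\E\langle h_\theta(\sigma)\rangle|\rangle=o(s_N)$, valid on average over $u_\theta\in[1,2]$ when $s_N=N^\alpha$ with $\alpha>1/4$; dividing the two integration-by-parts identities by $s_N u_\theta 2^{-j(\theta)}$ then produces \eqref{eq:MSPGGI} up to $o(1)$. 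With that substitution your proof is the paper's proof. A second, minor point: the cited arguments give the identities after averaging over $(u_\theta)$, from which one selects a deterministic choice that is allowed to depend on $N$; your claim of a single $N$-independent $(u_\theta)$ via Borel--Cantelli would require summable quantitative rates you have not established, and is not needed for the theorem as stated.
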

\begin{proof} 
The proof of this fact is almost identical to Chapter 3.2 in \cite{P3}, so we omit most details. The only essential difference
is the same as that in \cite{P2}, namely to point out why restricting the configuration space is important. 
This is because the main integration by parts step in the proof of \cite[Theorem 3.2]{P3} uses in an essential
way that the self overlap, $R_{\ell, \ell}$, and thus the variance of the field $h_N$, is constant. In our setting,
the relevant term, namely $C_{\ell,\ell}^\theta$ is plainly constant on $\Sigma_N(d_N)$ by inspection of \prettyref{eq:CovPerturbed}.

\end{proof}
\section{Upper Bound - Guerra Interpolation}\label{sec:Guerra-UB}
We now turn to proving the upper bound for the restricted free energy in \eqref{eq:ParisiConstrained} by a Guerra
interpolation argument \cite{Guerra}. Recall the definition of $\Sigma_N^\eps(d)$ \eqref{eq:constrainedspace} and let the corresponding partition function 
be denoted by $Z_N^\epsilon(d) = \sum_{\sigma \in \Sigma_N^\epsilon(d)} \exp\bigl(H_N(\sigma)\bigr)$. We will prove
\begin{equation}
\limsup_{N \to \infty} \frac{1}{N} \E \log Z_N^\epsilon(d) \leq \sP\bigl(r, x, d, (\lambda^s,Q^s)_{s \in \sS}\bigr)+ O(\epsilon).
\end{equation}
Given a sequence of strictly increasing $(x_i)_{i=1}^r$ as in \eqref{eq:RPC6}, let $(v_\alpha)_{\alpha \in \N^r}$ be the weights of the Ruelle probability cascades associated with that sequence. For $\alpha, \beta \in \N^r$, define
\begin{equation}
|\alpha \wedge \beta| = \min\{ 0 \leq p \leq r-1 \mathrel{}\mid\mathrel{} \alpha_1 = \beta_1, \dots,  \alpha_p = \beta_p, \alpha_{p+1} \neq \beta_{p+1} \}
\end{equation}
and $|\alpha \wedge \beta| =r$ if $\alpha = \beta$. For each species, let $(Z_{s}^{\alpha}(k))_{k\leq\kappa}$ be the centered Gaussian vector with covariance 
\begin{equation}\label{eq:z-def}
\E Z_{s}^{\alpha}Z_{s}^{\beta} = 2\sum_{t \in \sS}\Delta_{st}^{2}\rho_N^{t} Q_{\abs{\alpha\wedge\beta}}^{t}.
\end{equation}
Similarly let 
\begin{equation}\label{eq:y-def}
\E Y^{\alpha}Y^{\beta}=\sum_{s,t \in \sS}\Delta_{st}^{2}\rho_N^{s}\rho_N^{t}\left(Q_{\abs{\alpha\wedge\beta}}^{s},Q_{\abs{\alpha\wedge\beta}}^{t}\right).
\end{equation}
For each $s\in\sS$ and each $i\in I_s$, let $Z_i^\alpha$ be an independent copy of $Z_s^\alpha$. The processes $Z_i^\alpha$, $Y^\alpha$, and $H_N(\sigma)$ are all independent. Finally 
define the interpolating Hamiltonian, 
\begin{equation}
H_{N}(\sigma,\alpha;t)=\sqrt{t}H_{N}(\sigma)+\sqrt{t}\sqrt{N}Y^{\alpha}+\sqrt{1-t}\sum_{i \leq N}Z_{i}^{\alpha}(\sigma_{i}),
\end{equation}
and the corresponding interpolating free energy function
\begin{equation}
\phi^\epsilon_{N}(t)=\frac{1}{N}\E\log\sum_{\alpha \in \N^r }v_{\alpha}\sum_{\sigma\in \Sigma^\epsilon_N(d)}e^{H_{N}(\sigma,\alpha;t)}.
\end{equation}
We then have the following result.
\begin{lem}\label{lem:GuerraInterpolation}
	For any $\epsilon > 0$ and $N \geq \frac{1}{\epsilon}$,
	\[
	\partial_t \phi^\epsilon_N(t) \leq   C \epsilon^2.
	\]
	for some constant $C(\kappa, \Delta, |\sS|)$, uniformly in $N$. 
	Furthermore, if $d\in\sD_N$ and $\epsilon=0$, then
	\[
	\partial_t \phi_N^0(t)\leq 0.
	\]
\end{lem}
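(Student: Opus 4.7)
The plan is to differentiate $\phi^\epsilon_N(t)$ using Gaussian integration by parts applied to each of the three independent centered Gaussian processes entering $H_N(\sigma,\alpha;t)$, namely $H_N(\sigma)$, $\sqrt{N}Y^\alpha$, and $\sum_i Z_i^\alpha(\sigma_i)$. Writing $\langle\cdot\rangle_t$ for the associated Gibbs measure on $\Sigma^\epsilon_N(d)\times\mathbb{N}^r$ weighted by $v_\alpha$, the standard Guerra-type calculation yields
\begin{equation*}
\partial_t\phi^\epsilon_N(t)=\tfrac{1}{2N}\E\bigl\langle\Sigma_H(\sigma,\sigma)-\Sigma_H(\sigma^1,\sigma^2)+\Sigma_Y(\alpha,\alpha)-\Sigma_Y(\alpha,\beta)-\Sigma_Z(\sigma,\sigma;\alpha,\alpha)+\Sigma_Z(\sigma^1,\sigma^2;\alpha,\beta)\bigr\rangle_t,
\end{equation*}
where $\Sigma_H,\Sigma_Y,\Sigma_Z$ denote the covariances of the three fields. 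Using \eqref{eq:z-def}, \eqref{eq:y-def}, the defining covariance of $H_N$ given in the introduction, and the identity $\sum_{i\in I_s}Q^t(\sigma^1_i,\sigma^2_i)=N_s(R^s_{1,2},Q^t)$, each covariance reduces to a bilinear expression in $R^s_{1,2}, Q^s_{|\alpha\wedge\beta|}, R^s_{1,1}$ and $D^s$, weighted by $\Delta_{st}^2\rho^s_N\rho^t_N$.

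First I would collect the off-diagonal pieces (those involving $R_{1,2}^s$ and $Q^s_{|\alpha\wedge\beta|}$). Using the symmetry $\Delta_{st}^2=\Delta_{ts}^2$, the cross terms combine into a perfect square
\begin{equation*}
-\tfrac{1}{2}\sum_{s,t\in\sS}\Delta_{st}^{2}\rho_N^s\rho_N^t\bigl(R^s_{1,2}-Q^s_{|\alpha\wedge\beta|},\,R^t_{1,2}-Q^t_{|\alpha\wedge\beta|}\bigr).
\end{equation*}
Since $(\Delta_{st}^{2})$ is positive definite on $\sS$, writing $\Delta^2=MM^{T}$ expresses this sum as $-\sum_k\bigl\|\sum_s M_{sk}\rho^s_N(R^s_{1,2}-Q^s_{|\alpha\wedge\beta|})\bigr\|_{\mathrm{HS}}^2$, which is non-positive pointwise, so its Gibbs average is non-positive.

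Next, I handle the diagonal contribution. Since $R^s_{1,1}$ is always diagonal with entries $\frac{1}{N_s}\sum_{i\in I_s}\1(\sigma_i=k)$, on $\Sigma^\epsilon_N(d)$ we may write $R^s_{1,1}(k,k)=d_k^s+\eta_k^s$ with $|\eta_k^s|\leq\epsilon$. Using $Q^s_r=D^s$, expanding $(R^s_{1,1},R^t_{1,1})+(D^s,D^t)-2(R^s_{1,1},D^t)$ gives $\sum_k(d_k^s\eta_k^t-\eta_k^s d_k^t+\eta_k^s\eta_k^t)$. The crucial observation is that when multiplied by the symmetric weight $\Delta_{st}^{2}\rho_N^s\rho_N^t$ and summed over $s,t$, the two linear-in-$\eta$ terms cancel by the $s\leftrightarrow t$ symmetry of $\Delta^2$, leaving only the quadratic remainder $\tfrac{1}{2}\sum_{s,t}\Delta_{st}^2\rho_N^s\rho_N^t\sum_k\eta_k^s\eta_k^t$, bounded in absolute value by $C\epsilon^2$ with $C$ depending only on $\kappa$, $|\sS|$ and $\|\Delta\|$. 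The hypothesis $N\geq 1/\epsilon$ is used solely to guarantee $\Sigma^\epsilon_N(d)\neq\emptyset$. When $\epsilon=0$ and $d\in\sD_N$, the constraint forces $\eta\equiv 0$, the diagonal piece is identically zero, and the sharper bound $\partial_t\phi_N^0(t)\leq 0$ follows.

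The one slightly subtle step is the symmetry-based cancellation of the linear-in-$\eta$ term in the diagonal contribution; without this observation, one would only obtain an $O(\epsilon)$ bound, which would not suffice for the sharp constrained Parisi upper bound. The positive definiteness of $\Delta^2$ is what allows the off-diagonal piece to be treated as a sum of squares, mirroring the corresponding step in the single-species Potts analysis of \cite{P2} and the multispecies SK analysis of \cite{P1}.
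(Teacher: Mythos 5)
Your proposal is correct and follows essentially the same route as the paper: Gaussian integration by parts, non-positivity of the two-replica term from the positive definiteness of $\Delta^2$ (your factorization $\Delta^2=MM^{T}$ is just an explicit form of the paper's Gram-matrix-plus-Schur-product argument), and an $O(\epsilon^2)$ bound on the single-replica term because the self-overlap $R^s_{1,1}$ is diagonal and within $\epsilon$ of $D^s$. Your ``cancellation of the linear-in-$\eta$ terms'' is the same $s\leftrightarrow t$ symmetrization the paper uses implicitly when it writes the self-term directly as $\bigl(R^s_{1,1}-Q^s_r,\,R^t_{1,1}-Q^t_r\bigr)$, so it is a presentational rather than substantive difference.
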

\begin{proof}
	For any $d \in \sD$, the set $\Sigma_N^\epsilon(d)$ is non-empty for $N \geq \frac{1}{\epsilon}$. Recall that by Gaussian integration by parts, 
	\begin{align}
	\partial_t \phi^\epsilon_N(t)  & = \frac{1}{N}\E\Big\langle \partial_{t}H(\sigma,\alpha)\Big\rangle \notag\\
	& = \frac{1}{N} \E\Big\langle \E \partial_{t}H(\sigma^{1},\alpha^{1})H(\sigma^{1},\alpha^{1})- \E\partial_{t}H(\sigma^{1},\alpha^{1})H(\sigma^{2},\alpha^{2}) \Big \rangle \label{eq:upbdtemp}
	\end{align}
	where $\langle \cdot \rangle$ is with respect to the Gibbs measure $G(\sigma, \alpha)\propto v_\alpha \exp(H_N(\sigma, \alpha;t))$ on $\Sigma_N^\epsilon(d) \times \N^r$. If we write $Z^\alpha_i(\sigma_i) =  \sum_{k \leq \kappa}Z_i^\alpha(k) \1 (\sigma_i = k)$ then 
	\begin{align*}
	\E\partial_{t}H(\sigma^{1},\alpha^{1})H(\sigma^{2},\alpha^{2})	& =\frac{1}{2}\Big(\E H_{N}(\sigma^{1})H_{N}(\sigma^{2})+N\E Y^{\alpha^{1}}Y^{\alpha^{2}}-\sum_{s \in \sS}\sum_{i\in I_{s}}\E Z_{s}^{\alpha^1}(\sigma_{i}^{1})Z_{s}^{\alpha^2}(\sigma_{i}^{2})\Big)\\
	& =\frac{N}{2}\sum_{s,t \in \sS} \rho_N^s \rho_N^t\Delta_{st}^{2}\Big[\big( R_{1,2}^s, R_{1,2}^t\big)+\big(Q_{\abs{\alpha^1\wedge\alpha^2}}^{t},Q_{\abs{\alpha^1\wedge\alpha^2}}^{s}\big)- 2\big(Q_{\abs{\alpha^1\wedge\alpha^2}}^{t}, R^s_{1,2}\big)\Big]\\
	& =\frac{N}{2}\sum_{s,t\in \sS}\Delta_{st}^{2}\left( R^s_{1,2}-Q_{\abs{\alpha^1\wedge\alpha^2}}^{s}, R^t_{1,2}-Q_{\abs{\alpha^1\wedge\alpha^2}}^{t}\right)\rho_N^{s}\rho_N^{t}\geq0.
	\end{align*}
	The last quantity is non-negative comes from the fact that if we
	define the matrix 
	\[
	A_{1,2,\alpha^1,\alpha^2}=\left(\left( R^s_{1,2} -Q_{\abs{\alpha^1\wedge\alpha^2}}^{s}, R^t_{1,2}-Q_{\abs{\alpha^1\wedge\alpha^2}}^{t}\right)\right)_{s,t}
	\]
	then it is positive definite as it is a Gram-Matrix. The Hadamard product of this matrix with the positive definite matrix $\Delta = (\Delta^2_{s,t})_{s,t}$, is still positive definite by the Schur product theorem. Thus the above expression can be written as
	\[
	\frac{N}{2}\left(\Delta^{2}\circ (A_{1,2,\alpha^1,\alpha^2})\rho,\rho\right)\geq0,
	\]
	where $\circ$ denotes the Hadamard product.
	For $\sigma \in \Sigma_N^\epsilon(d)$, the self overlap matrix $R^s_{1,1}$ is a diagonal matrix with entries $R^s_{1,1}(k,k) \in [d^s_k - \epsilon, d_k^s + \epsilon]$. Therefore the diagonal terms of $R^s_{1,1} - Q^s_{|\alpha^1, \alpha^1|}$ satisfy
	\[
	|R^s_{1,1}(k,k) - Q^s_{r}(k, k)| \leq \epsilon, \quad \forall s \in \sS, k \leq \kappa.
	\]
	Since there are $\kappa$ non-zero terms in the inner product $\left( R^s_{1,1} -Q_{\abs{\alpha^1\wedge\alpha^1}}^{s}, R^t_{1,1}-Q_{\abs{\alpha^1\wedge\alpha^1}}^{t}\right)$, our upper bound of the diagonals imply
	\[
	\E\partial_{t}H(\sigma^{1},\alpha^{1})H(\sigma^{1},\alpha^{1}) = \frac{N}{2}\left(\Delta^{2}\circ (A_{1,1,\alpha^{1}\alpha^{1}})\rho,\rho\right) \leq \frac{N\|\Delta\| |\sS|^2\kappa}{2} \epsilon^2.
	\]
	Putting this back in \eqref{eq:upbdtemp}, implies 
	\[
	\partial \phi^\epsilon_N (t) = \E\left\langle\frac{1}{2}\left(\Delta^{2}\circ (A_{1,1,\alpha^{1},\alpha^{1}})\rho,\rho\right)\right\rangle -\E\left\langle\frac{1}{2}\left(\Delta^{2}\circ (A_{1,2,\alpha^{1},\alpha^{2}})\rho,\rho\right)\right\rangle\leq  \frac{\|\Delta\| |\sS|^2\kappa}{2} \epsilon^2.
	\]
	In the case $d \in \sD_N(d)$, the set $\Sigma_N(d) = \Sigma_N^0(d)$ is non-empty. For $\sigma \in \Sigma_N(d)$, the overlap array $R^s_{\ell, \ell'} = Q^s_{r}$ for all $s \in \sS$. In particular,  $A_{1,1,\alpha^{1},\alpha^{1}} = 0$, which implies
	\[
	\partial \phi^0_N (t) = -\E\left\langle\frac{1}{2}\left(\Delta^{2}\circ (A_{1,2,\alpha^{1},\alpha^{2}})\rho,\rho\right)\right\rangle\leq  0.
	\]
	
\end{proof}
As a consequence $\frac{1}{N}\E\log Z^\epsilon_{N}(d)$ is bounded above by
\begin{equation}\label{eq:upbdtemp2}
\frac{1}{N}\E\log \sum_{\alpha \in \N^r} v_\alpha \sum_{\sigma \in \Sigma^\eps_N(d)} \exp\sum_{i \leq N} Z^\alpha_{i} (\sigma_i)  - \frac{1}{N}\E\log \sum_{\alpha \in \N^r} v_\alpha \exp \sqrt{N} Y^\alpha + O(\epsilon^2).
\end{equation}
We now introduce the Lagrange multipliers $(\lambda_k^s)_{k \leq \kappa} \in \R^k$, which are
dual to the proportions $\sum_{i\in I_s} \1(\sigma_i=k)$. 
If we add and subtract $\sum_{i \leq N}\sum_{k \leq \kappa}\lambda^{s(i)}_{k} \1(\sigma_i = k)$ in the exponent of the first term in \eqref{eq:upbdtemp2}, then the first term is bounded by
\[
-\sum_{s \in \sS}\sum_{k \leq \kappa} \rho_N^s d^s_k \lambda^s_k + \frac{1}{N}\E\log \sum_{\alpha \in \N^r} v_\alpha \sum_{\sigma \in \Sigma^\eps_N(d)} \exp\sum_{i \leq N} \Big(Z^\alpha_{i} (\sigma_i) + \sum_{k \leq \kappa} \1(\sigma_i = k) \lambda_k^{s(i)} \Big)   + O(\epsilon).
\]
Since $\Sigma_N^\epsilon(d) \subset \Sigma_N$, summing over $\sigma \in \Sigma_N$ in the larger set only increases our upper bound. We can now factor into species using the basic properties of the Ruelle Probability Cascades (see the discussion after Theorem 2.9 in \cite{P3}) to see
\begin{align}
&\quad \frac{1}{N}\E\log \sum_{\alpha \in \N^r} v_\alpha \sum_{\sigma \in \Sigma_N} \exp\sum_{i \leq N} \Big( Z^\alpha_{i} (\sigma_i) + \sum_{k \leq \kappa} \1(\sigma_i = k) \lambda_k^{s(i)}   \Big) \notag\\
&= \frac{1}{N}\E\log \sum_{\alpha \in \N^r} v_\alpha \prod_{i \leq N} \sum_{\sigma_i \leq \kappa} \exp  \Big(Z^\alpha_{i} (\sigma_i) + \sum_{k \leq \kappa} \1(\sigma_i = k) \lambda_k^{s(i)} \Big)  \notag\\
&= \sum_{s \in \sS} \rho_N^s \E\log \sum_{\alpha \in \N^r} v_\alpha \sum_{\sigma \leq \kappa} \exp  \Big(Z^\alpha_s (\sigma) +  \lambda_\sigma^s \Big)\notag\\
&= \sum_{s \in \sS} \rho_N^s X_0^s.\label{eq:upbdzterm}
\end{align}
where $X_0^s$ was defined in \eqref{eq:recursion}. Similarly, we see
\begin{equation}\label{eq:upbdyterm}
\frac{1}{N}\E\log \sum_{\alpha \in \N^r} v_\alpha \exp \sqrt{N} Y^\alpha = \frac{1}{2} \sum_{\ell = 0}^{r-1} x_{\ell} \sum_{s,t \in \sS} \Delta^2_{st} \rho_N^s \rho_N^t \left( (Q^s_{\ell+1},Q^t_{\ell+1}) - (Q^s_{\ell},Q^t_{\ell}) \right).
\end{equation}
Referring back to \eqref{eq:upbdtemp2}, equations \eqref{eq:upbdzterm} and \eqref{eq:upbdyterm} imply
\begin{equation}\label{eq:guerra-ub-thick}
\limsup_{N \to \infty} \frac{1}{N} \E \log Z_N^\epsilon(d) \leq \sP\bigl(r, x, d, (\lambda^s,Q^s)_{s \in \sS}\bigr)+ O(\epsilon).
\end{equation}

In the case $d_N \in \sD_N$, $\sum_{i \in I_s} \1(\sigma_i = k) = N_s d_k^s$ for all $s \in \sS$ and $k \leq \kappa$. The above computation implies
\begin{equation}\label{eq:guerra-ub}
\frac{1}{N} \E \log Z_N(d_N) \leq \sP\bigl(r, x, d, (\lambda^s,Q^s)_{s \in \sS}\bigr).
\end{equation}

\section{Continuity and Decoupling Theorems}\label{sec:continuity}
Before we turn to the proof of the matching lower bounds, we briefly pause to study
the analytical properties of some of the functionals used in the upper bound, 
as well as relevant functionals for the lower bound. 
These functionals will  be in terms of the Gaussian processes $(Z_s^\alpha)$ and $(Y^\alpha)$
from \eqref{eq:z-def} and \eqref{eq:y-def} respectively.

Many of the proofs in this section are essentially identical to arguments either from 
 \cite[Section 3]{P2}, or are standard arguments and can be seen, for example, in \cite{P3}. Thus to make the presentation 
 concise, we explain only the parts where these arguments deviate from standard arguments
 and outline the rest. 

\subsection{Decoupling Size of the Constraints}\label{thm:decouple} 
When we compute the lower bound for the free energy, we will find that the cavity
method will naturally impose an additional constraint on free energy, namely,
that the cavity coordinates satisfy the additional constraint that they lie in some $\Sigma(d)$.
As this constraint does not appear in the upper bound from \prettyref{sec:Guerra-UB}, we will need to remove this to obtain 
the matching lower bound. To this end, define the functional on $\sD_M$
\begin{equation}\label{eq:func-constrained}
f^s_{M}(d) := \frac{1}{M} \E \log \sum_{\alpha \in \N^r} v_\alpha \sum_{\sigma \in \Sigma_M(d)} \exp \sum_{i \leq M} Z_i^\alpha(\sigma_i),
\end{equation}
where $Z_i^\alpha$ are i.i.d. copies of $Z_s^\alpha$. Furthermore, we fix the covariance structure of $Z_i^\alpha$ and make the dependence of $X_0^s$ on the parameter $\lambda$ explicit
\begin{equation}\label{eq:decouplefunctional}
X_0^s\bigl(\lambda\bigr) = \frac{1}{M} \E \log \sum_{\alpha \in \N^r} v_{\alpha} \sum_{\sigma \in \Sigma_M} \exp \sum_{i \leq M} Z_{s}^\alpha(\sigma_i) + \sum_{k \leq \kappa} \lambda^s_k\1(\sigma_i = k).
\end{equation}
We have the following result.
\begin{theo}
If $d_M^s \in \sD_M$ and $\lim_{M \to \infty} d_M^s \rightarrow d^s$ then
\begin{equation}
\lim_{M \to \infty} f^s_M(d_M) = \inf_{\lambda^s} \bigg(- \sum_{k \leq \kappa} \lambda_k^s d_k^s + X_0^s\bigl(\lambda\bigr) \bigg).
\end{equation}
\end{theo}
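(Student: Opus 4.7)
The plan is to prove the equality by establishing matching upper and lower bounds. The upper bound is a direct Lagrange-multiplier argument combined with the Ruelle-cascade factorization; the lower bound is the more delicate direction and will be obtained by convex duality together with an asymptotic-concavity argument.

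For the upper bound, I fix $\lambda\in\R^\kappa$. Every $\sigma\in\Sigma_M(d_M)$ satisfies $\sum_{i\leq M}\1(\sigma_i=k)=Md_{M,k}^s$ by definition, so the factor $\exp\bigl(\sum_i\lambda_{\sigma_i}-M\sum_k\lambda_k d_{M,k}^s\bigr)$ equals one on $\Sigma_M(d_M)$. Inserting it into \eqref{eq:func-constrained} and then enlarging the inner sum from $\Sigma_M(d_M)$ to all of $\Sigma_M$ yields
\[
f^s_M(d_M)\;\leq\;-\sum_k\lambda_k\,d_{M,k}^s+\frac{1}{M}\E\log\sum_{\alpha\in\N^r} v_\alpha\sum_{\sigma\in\Sigma_M}\exp\sum_{i\leq M}\bigl(Z_i^\alpha(\sigma_i)+\lambda_{\sigma_i}\bigr).
\]
Since the $Z_i^\alpha$ are i.i.d. over $i$, the RPC factorization carried out in the derivation of \eqref{eq:upbdzterm} identifies the second term exactly with $X_0^s(\lambda)$. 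Letting $M\to\infty$ with $d_{M,k}^s\to d_k^s$ and then taking the infimum over $\lambda$ proves $\limsup_M f^s_M(d_M)\leq \inf_\lambda\bigl(-\sum_k\lambda_k d_k^s+X_0^s(\lambda)\bigr)$.

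For the lower bound, I partition $\Sigma_M$ as the disjoint union of the sets $\Sigma_M(d')$ for $d'\in\sD_M$ and reverse the above calculation to obtain
\[
X_0^s(\lambda)\;=\;\frac{1}{M}\E\log\sum_{d'\in\sD_M} e^{M\sum_k\lambda_k d'_k}\,Z_M(d'),\qquad Z_M(d'):=\sum_\alpha v_\alpha\sum_{\sigma\in\Sigma_M(d')}\exp\sum_i Z_i^\alpha(\sigma_i),
\]
so that $\frac{1}{M}\E\log Z_M(d')=f^s_M(d')$. Because $|\sD_M|=O(M^\kappa)$ and $\frac{1}{M}\log Z_M(d')$ concentrates around $f^s_M(d')$ uniformly in $d'$ (Gaussian concentration together with a union bound over polynomially many proportions), the elementary estimate $\log\max\leq\log\sum\leq\log\max+\log|\sD_M|$ gives the Laplace-type identity
\[
X_0^s(\lambda)\;=\;\max_{d'\in\sD_M}\Bigl(\sum_k\lambda_k d'_k+f^s_M(d')\Bigr)+o(1).
\]
Consequently the function $\phi(d):=\inf_\lambda\bigl(X_0^s(\lambda)-\sum_k\lambda_k d_k\bigr)$, which is concave as an infimum of affine functions, coincides by Fenchel--Moreau duality with the upper concave envelope of the limit $g(d):=\lim_M f^s_M(d_M)$ along any sequence $d_M\to d$. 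The proof therefore reduces to showing that $g$ is itself concave.

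Concavity of $g$ is obtained by a splitting argument: for $d=\alpha d^1+(1-\alpha)d^2$ with $\alpha$ rational, partition $[M]$ into blocks of sizes $\alpha M$ and $(1-\alpha)M$ and restrict $\sigma$ to have proportions $d^1,d^2$ on the respective blocks. The resulting set of ``product'' configurations is a subset of $\Sigma_M(d_M)$ for a suitable $d_M\to d$, and since the $Z_i^\alpha$ for $i$ in disjoint blocks are independent, the inner exponential factorizes across blocks. A standard super-additivity inequality for expectations of log-sums against the Ruelle cascade, of the form $\E\log\sum_\alpha v_\alpha A_\alpha B_\alpha\geq \E\log\sum_\alpha v_\alpha A_\alpha+\E\log\sum_\alpha v_\alpha B_\alpha$ for such block-product $A_\alpha,B_\alpha$, then yields the concavity inequality. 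This RPC super-additivity is the main technical obstacle; it is essentially the same manipulation used in the constrained Potts spin glass analysis of \cite[Sec.~3]{P2} and the general cavity framework of \cite{P3}. A secondary subtlety is that if $d^s$ lies on the boundary of $\sD$ the minimizer $\lambda^*$ may escape to infinity; this is handled by perturbing $d^s$ into the relative interior of $\sD$, applying the result there, and passing to the limit using continuity of $\phi$ on $\operatorname{relint}\sD$ together with continuity of $f^s_M$ in the constraint (of the type developed elsewhere in \prettyref{sec:continuity}).
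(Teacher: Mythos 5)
Your overall route --- expressing the answer as a concave conjugate, proving concavity of the limiting constrained functional by block splitting, and recovering the limit by Fenchel--Moreau --- is the same strategy the paper follows (it delegates these steps to Lemmas 4--6 of \cite{P2}, restated here as Lemmas \ref{lem:decouple1} and \ref{lem:decouple2}), and your Lagrange-multiplier upper bound is correct. However, two steps in your lower bound are genuinely incomplete. First, you treat $g(d)=\lim_{M} f^s_M(d_M)$ as a well-defined object, independent of the approximating sequence, so that Fenchel--Moreau can identify it with $\inf_\lambda\bigl(X_0^s(\lambda)-\sum_{k\leq\kappa}\lambda_k d_k\bigr)$; but the existence of this limit is part of what must be proved. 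Exact constraints at different sizes $M$ do not nest, so both your splitting argument and any Fekete-type passage to the limit require a continuity estimate in the constraint that is uniform in $M$ --- precisely the bound $\sup_{d}|f^s_{M,\epsilon}(d)-f^s_M(d)|\leq L\sqrt{\epsilon}$ of \eqref{eq:epsilon-dilation-distance}, whose smart-path and entropy (counting $\mathcal{N}(\sigma)$) proof is the bulk of the paper's own argument. The continuity you invoke from \prettyref{sec:continuity} does not supply this: Lemmas \ref{lem:ctyofz} and \ref{lem:ctyofy} give Lipschitz dependence on the path $\bm\pi$, not on the constraint $d$. The same uniform estimate is also what lets one pass your finite-$M$ Laplace identity to the limit and compare nearby rational proportions.

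Second, in the Laplace step you justify concentration of $\frac1M\log Z_M(d')$ around $f^s_M(d')$ by ``Gaussian concentration together with a union bound''. Conditionally on the cascade weights $(v_\alpha)$, Gaussian concentration only controls fluctuations around the conditional mean $\frac1M\E_Z[\log Z_M(d')\mid v]$; you still need to show that this conditional mean is insensitive to $v$ at scale $M$, which uses the Poisson invariance structure of the Ruelle probability cascades and is exactly the content of the cited \cite[Lemma 6]{P2} (Lemma \ref{lem:decouple2} here), which the paper itself cites rather than reproves. On the other hand, the step you single out as the main technical obstacle is easier than you suggest: for factors $A_\alpha,B_\alpha$ built from independent cascade-indexed Gaussians on disjoint blocks, one has the exact additivity $\E\log\sum_\alpha v_\alpha A_\alpha B_\alpha=\E\log\sum_\alpha v_\alpha A_\alpha+\E\log\sum_\alpha v_\alpha B_\alpha$ by the same recursive factorization of the cascades already used in \eqref{eq:upbdzterm}, so no new inequality is required there.
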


\begin{proof}
We begin by observing that there is a constant $L$ such that
\begin{equation}\label{eq:epsilon-dilation-distance}
\sup_{d \in \sD_M} |f^s_{M, \epsilon} (d) - f^s_M(d) | \leq L \sqrt{\epsilon},
\end{equation}
where $f^s_{M, \epsilon} (d)$ is the same functional as in \eqref{eq:func-constrained} but summed over $\sigma \in \Sigma^{\epsilon}_M(d)$. This is the analogue of \cite[Lemma 3]{P2}, adapted to the covariance structure of the Gaussian processes $Z_i$ defined in \eqref{eq:z-def}. We also fix the covariance structure of $Z_i$ to remove its dependence on $d$. Let $\sigma\in \Sigma_M^\eps(d)$ and $\tilde\sigma$ be a vector in $\Sigma_M(d)$ 
with the minimal number of different coordinates from $\sigma$. 
Let 
\[
\tilde f^s_{M,\epsilon} = \frac{1}{M} \E \log \sum_{\alpha \in \N^r} v_\alpha \sum_{\sigma \in \Sigma^\epsilon_M (d)} \exp \sum_{i \in S_M} Z_i^\alpha(\tilde \sigma_i).
\]
 Finally, let $\tilde Z_i^\alpha$ be independent copies of $Z_i^\alpha$, and consider the ``smart path''
 \[
 Z_t(\alpha,\sigma)= \sum_{i\leq M} (\sqrt{t} Z_i^\alpha(\sigma)+\sqrt{1-t}\tilde Z_i^{\alpha}(\tilde\sigma_i).
 \]
We will then show that
 \[
 \phi(t) = \frac{1}{M} \E \log \sum_{\alpha \in \N^r} v_\alpha \sum_{\sigma \in \Sigma^\epsilon_M(d)} \exp Z_t(\alpha, \sigma),
\]
has a derivative that is bounded by $C\epsilon$. To see this, let 
\begin{align*}
	C\left((\sigma^1, \alpha^1), (\sigma^2, \alpha^2)\right) &= \frac{1}{M} \frac{\partial Z_{t} (\sigma^1, \alpha^1)}{\partial_t} Z_{t} (\sigma^2, \alpha^2) 
	\\&= \frac{1}{M} \sum_{i \leq M} \left(\sum_{t \in \sS}\Delta_{st}^{2}\rho^{t} \Big( Q_{\abs{\alpha\wedge\beta}}^{t} (\sigma_i^1, \sigma_i^2) - Q_{\abs{\alpha\wedge\beta}}^{t} (\tilde \sigma_i^1, \tilde \sigma_i^2) \Big)\right) 
	\\&\leq  \|\Delta\| \cdot |\sS| \cdot \kappa\cdot \epsilon.
\end{align*}
In the last line, we use that $Q^t\leq D^t$ and that $\norm{D^t}\leq 1$ along with the observation that
\[
\sum \1\left(\sigma_i\neq\tilde{\sigma}_i\right) \leq \lceil \kappa M \epsilon\rceil.
\]
Differentiating and integrating by parts, we obtain
\[
\abs{\phi'(t)} =\abs{\E\left\langle C((\sigma^1,\alpha^1),(\sigma^1,\alpha^1))-C((\sigma^1,\alpha^1),(\sigma^2,\alpha^2))\right\rangle}
\leq 2L\epsilon.
\]
for some constant $L=L(\kappa,\sS,\Delta)$. Integrating this inequality, we can conclude
\begin{equation}\label{eq:thm11temp}
|f^s_{M, \epsilon} - \tilde f^s_{M, \epsilon}| \leq 2 L \epsilon.
\end{equation}

For $\sigma\in \Sigma(d)$, let us denote by $\mathcal{N}(\sigma)$ the number of configurations $\rho\in\Sigma_\eps(d)$ such that $\tilde{\rho} = \sigma.$ Then we can rewrite and bound $\tilde{f}^s_{M,\eps}(d)$ as follows,
\begin{align*}
\tilde{f}_{M,\eps}(d) 
&= 
\frac{1}{M} \E \log \sum_{\alpha\in\N^r} v_\alpha \sum_{\sigma\in \Sigma(d)} \mathcal{N}(\sigma)\exp \beta \sum_{i\leq N} Z_{i}^\alpha({\sigma}_i).
\\
&\leq f^s_{M}(d) +\frac{1}{N}\max_{\sigma\in \Sigma(d)} \log \mathcal{N}(\sigma).
\end{align*}
Using a combinatorial argument, the term containing $\mathcal{N}(\sigma)$ can be made arbitrarily small by choosing $\eps$ small enough. For any $\sigma\in \Sigma(d)$, the number $\mathcal{N}(\sigma)$ is bounded by the number of configurations $\rho$ such that $\sum_{i\leq M}I(\rho_i\not= {\sigma}_i) \leq L M\eps$. By the classical large deviation estimate for Bernoulli random variables, a number of different ways to choose $LM\eps$ coordinates is bounded by  $2^M \exp(-M I(1-L\eps)),$ where 
$$
I(x)=\frac{1}{2}\bigl((1+x)\log(1+x)+(1-x)\log(1-x)\bigr),
$$
and there are $\kappa^{LM\eps}$ ways to choose $\rho_i$ different from $\sigma_i$ on these coordinates. Therefore,
\begin{align*}
\frac{1}{M}\max_{\sigma\in \Sigma(d)} \log \mathcal{N}(\sigma)
&\leq
L\eps\log\kappa+
\log 2 - I(1-L\eps)
\\
&=
L\eps\log\kappa+
\log\Bigl(1+\frac{L\eps}{2-L\eps}\Bigr)+\frac{L\eps}{2}\log
\frac{2-L\eps}{\eps}
\leq L\sqrt{\eps},
\end{align*}
for small enough $\eps$. We showed that ${f}_{M,\eps}(d)\leq f_M(d)+L\sqrt{\eps}$.
Combining this with \eqref{eq:thm11temp} yields the estimate
\[
|f^s_M - f^s_{M,\epsilon}|\leq 2 L \sqrt{\epsilon}.
\]
Note that \eqref{eq:epsilon-dilation-distance}, implies that the map $d\mapsto f_M^s(d)$ is
H\"older 1/2. 

The remaining steps to complete the proof are identical to \cite[Section 3]{P2}. An additivity argument and \eqref{eq:epsilon-dilation-distance} will imply the following.
\begin{lem}\label{lem:decouple1}
\cite[Lemma 4,5]{P2} If $d_M \in \sD_M$ and $\lim_{M \to \infty} d_M = d \in \sD$ then the limit
\[
f(d) := \lim_{M \to \infty} f_M(d_M) = \lim_{\epsilon \to 0} f_{\epsilon}(d)
\]
exists and is concave. In addition, for all $d^1, d^2 \in \sD$
\[
| f(d^1) - f(d^2)| \leq L \|d^1 - d^2\|_\infty^{1/2}
\]
for some constant $L$ that depends $\kappa, \sS, \Delta$. 
\end{lem}
Next, a direct computation using the recursive property of the Ruelle Probability cascades will imply the following result.
\begin{lem}\label{lem:decouple2} \cite[Lemma 6]{P2} For any $\lambda = (\lambda_k)_{k \leq \kappa} \in \R^\kappa$
	\begin{equation}\label{eq:biconjugation}
	X_0^s(\lambda) = \max_{d \in \sD} \Bigl( f^s(d) + \sum_{k \leq \kappa} \lambda_k^s d_k^s \Bigr).
	\end{equation}
\end{lem}

Notice $f^s_M(d)$ is continuous and bounded on $\sD$. Since it is also concave by \prettyref{lem:decouple1}, we can take the Legendre transform of \eqref{eq:biconjugation} to obtain
\[
\lim_{N \to \infty} f^s_M (d) = \inf_\lambda \bigg( - \sum_{k \leq \kappa} \lambda^s_kd_k^s + X_0^s(\lambda) \bigg).
\]

\end{proof}

\subsection{Continuity Theorems}
Along with the decoupling theorem, we will also need to prove continuity
of the functionals appearing in the Aizenman-Sims-Starr scheme for this system. 

Define the two functionals on a subset $S \subset [\kappa]^M$,
\begin{equation}\label{eq:func-pi-z}
f^Z_{M}(S,s; \bm \pi) := \frac{1}{M_s} \E \log \sum_{\alpha \in \N^r} v_\alpha \sum_{\sigma \in S} \exp \sum_{i \leq M_s} Z_i^\alpha(\sigma_i),
\end{equation}
\begin{equation}\label{eq:func-pi-y}
f^Y_{M}(\bm \pi) := \frac{1}{M} \E \log \sum_{\alpha \in \N^r} v_\alpha \exp \sqrt{M} Y^\alpha,
\end{equation}
where $(Z_i^\alpha)_{i \leq M}$ are \iid copies of $Z_s^\alpha$ defined in \eqref{eq:z-def}. Observe that  this functional depends on 
$\bm \pi = (\pi^s)_{s \in \sS}$ is through the covariance structures \eqref{eq:z-def} and \eqref{eq:y-def}.
\begin{lem}\label{lem:ctyofz}
For every $S \subset [\kappa]^M$, the functional $f^Z_{M}(S,s; \bm \pi)$ is Lipschitz in $\bm \pi$
\[
|f^Z_{M}(S,s; \bm \pi) - f^Z_{M}(S,s; \bm {\tilde \pi}) | \leq L \int_{0}^{1} \max_{s \in \sS} \| \pi^s(x) - \tilde\pi^s(x) \|_1 \, \de x.
\]
\end{lem}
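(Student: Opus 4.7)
The natural approach is a Gaussian smart-path interpolation between the two covariance structures, mirroring the Guerra interpolation of Lemma \ref{lem:GuerraInterpolation}. I would first reduce to the case where $\bm\pi$ and $\bm{\tilde\pi}$ are both piecewise constant on a common partition $0 = x_0 < x_1 < \cdots < x_r = 1$, obtained by taking a common refinement of their natural jump sets. Both $f^Z_M(S,s;\cdot)$ and the metric $\Delta$ are continuous under such refinement (the standard density argument used throughout \cite{P3}), so the Lipschitz bound for general paths in $\bm\Pi$ will follow by approximation. Under this discretization, both paths are encoded by finite sequences $(Q^t_p)_{p=0}^{r}$ and $(\tilde Q^t_p)_{p=0}^{r}$, and a single Ruelle cascade on $\N^r$ with weights $v_\alpha$ determined by $(x_p)$ serves for both.

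Let $(Z^\alpha_i)$ and $(\tilde Z^\alpha_i)$ be independent families of centered Gaussian processes indexed by $\alpha \in \N^r$, with covariances given by \eqref{eq:z-def} for the two paths. I would then form the interpolation
\begin{equation*}
\phi(t) \;=\; \frac{1}{M_s}\E\log \sum_{\alpha\in\N^r} v_\alpha\sum_{\sigma\in S}\exp\sum_{i\leq M_s}\bigl(\sqrt{t}\,Z^\alpha_i(\sigma_i)+\sqrt{1-t}\,\tilde Z^\alpha_i(\sigma_i)\bigr),
\end{equation*}
so that $\phi(1)-\phi(0)=f^Z_M(S,s;\bm\pi)-f^Z_M(S,s;\bm{\tilde\pi})$. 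A Gaussian integration by parts, identical to the computation in the proof of Lemma \ref{lem:GuerraInterpolation}, yields
\begin{equation*}
\phi'(t) \;=\; \frac{1}{2M_s}\E\biggl\langle\sum_{i\leq M_s}\bigl[C(\alpha^1,\alpha^1,\sigma_i^1,\sigma_i^1)-C(\alpha^1,\alpha^2,\sigma_i^1,\sigma_i^2)\bigr]\biggr\rangle,
\end{equation*}
where $C(\alpha,\beta,k,k')=2\sum_t\Delta_{st}^2\rho^t(Q^t_{|\alpha\wedge\beta|}-\tilde Q^t_{|\alpha\wedge\beta|})(k,k')$ and $\langle\cdot\rangle$ denotes the Gibbs average under the interpolating Hamiltonian on $S\times\N^r$.

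The final step bounds each matrix entry by $|C(\alpha,\beta,k,k')|\leq 2\|\Delta\|_\infty^2|\sS|\max_t\|Q^t_{|\alpha\wedge\beta|}-\tilde Q^t_{|\alpha\wedge\beta|}\|_1$ and invokes the classical Ruelle cascade identity $\E\langle f(|\alpha^1\wedge\alpha^2|)\rangle = \sum_{p=0}^{r-1}(x_{p+1}-x_p)f(p)$ (see \cite[Theorem 2.9]{P3}) to convert the Gibbs average over tree depths into the Riemann sum that coincides with $\int_0^1\max_s\|\pi^s(x)-\tilde\pi^s(x)\|_1\,dx$; the diagonal contribution (at $|\alpha^1\wedge\alpha^1|=r$) is handled identically using the right-limit values of the paths. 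This produces $|\phi'(t)|\leq L\int_0^1\max_s\|\pi^s(x)-\tilde\pi^s(x)\|_1\,dx$ with $L=L(\Delta,\kappa,|\sS|)$, uniform in $t\in[0,1]$ and in $S\subset[\kappa]^M$, after which integration over $t$ finishes the proof. The main technical step is this last one: carefully pairing the matrix-valued path increments with the entrywise $\ell^1$ norm appearing in the metric $\Delta$ while applying the RPC identity to translate a discrete sum over tree depths into the integral over $[0,1]$.
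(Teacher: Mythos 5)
Your proposal is correct and follows essentially the same route as the paper: a common refinement so both paths share one sequence $(x_p)$, a smart-path interpolation $\sqrt{t}\,Z+\sqrt{1-t}\,\tilde Z$, Gaussian integration by parts, an entrywise bound by $\max_{s}\|Q^s_{|\alpha^1\wedge\alpha^2|}-\tilde Q^s_{|\alpha^1\wedge\alpha^2|}\|_1$, and the RPC depth-distribution identity to produce the integral $\int_0^1\max_s\|\pi^s(x)-\tilde\pi^s(x)\|_1\,\de x$. The only minor point is that applying that identity to the Gibbs average under the interpolating (tilted) Hamiltonian rests on the invariance of the cascade weights under random tilts, i.e.\ \cite[Theorem 4.4]{P3} rather than \cite[Theorem 2.9]{P3}, which is exactly how the paper justifies the same step.
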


\begin{proof} 
Observe that  any two monotone paths $\bm \pi$ and $\tilde{\bm \pi}$,
can be associated with a single sequence
\[
x_{-1} = 0 \leq x_0 \leq \dots \leq x_{r-1} \leq x_r = 1\\
\]
and, for every $s\in\sS$, two sequences
\begin{align*}
	0 < Q^s_0 \leq \dots < Q^s_{r-1} < Q^s_r = \operatorname{diag}(d_1, \dots, d_\kappa)\\
	0 < \tilde Q^s_0 \leq \dots < \tilde Q^s_{r-1} < \tilde Q^s_r = \operatorname{diag}(d_1, \dots, d_\kappa).
\end{align*}
Consider the Gaussian processes $Z_s^\alpha$ and $\tilde Z_s^\alpha$ with the covariance
\prettyref{eq:z-def} with $Q$ and $\tilde Q$ respectively, and consider the smart path between these
two processes 
\[
Z_{t,i}^\alpha =\sqrt{t}Z_i^\alpha + \sqrt{1-t}\tilde Z_i^\alpha.
\]
If we take $M$ copies of this process, and let $(v_\alpha)$ be the Ruelle Probability Cascade \cite{Ruelle}
associated to $(x_k)$, then if we define
\[
\varphi(t) := \frac{1}{M_s} \E \log \sum_{\alpha \in \N^r} v_\alpha \sum_{\sigma \in S} \exp \sum_{i \leq M_s} Z_{t,i}^\alpha (\sigma_i),
\]
we have $\phi(1)=f_Z(S,s;\bm\pi)$ and $\phi(0)=f_{Z}(S,s;\tilde{\bm{\pi}})$. If $H_{M,t}(\alpha, \sigma) = \sum_{i \leq M} Z_{t,i}^\alpha(\sigma_i)$, let $\langle \cdot \rangle_t$ denote the average with respect to the Gibbs measure
\begin{equation}\label{eq:gibbs-smartpath}
G_t(\sigma, \alpha) \sim v_\alpha \exp H_{M,t}(\alpha, \sigma).
\end{equation}
We can now compute $\varphi'(t)$ using integration by parts. We first note that the covariance
	\[
	\frac{1}{M_s} \E \frac{\partial H_{M,t} (\sigma^1, \alpha^1)}{\partial_t} H_{M,t} (\sigma^2, \alpha^2) = \frac{1}{M_s} \sum_{i \leq M} 
	\sum_{s \in \sS} \Delta^2_{s, s(i)} \rho^s \Bigl(Q^s_{\alpha^1 \wedge \alpha^2}(\sigma_i^1, \sigma_i^2) - (\tilde Q^s_{\alpha^1 \wedge \alpha^2}(\sigma_i^1, \sigma_i^2) \Bigr).
	\]
The term on the right is bounded in absolute value by $\norm{\Delta} \cdot \abs{\sS}\max_{s\in\sS}
\norm{Q_{\alpha^1\wedge\alpha^2}^s-\tilde Q_{\alpha^1\wedge\alpha^2}^s}_1$. Recalling that the marginals of $G_t(\alpha)$ \eqref{eq:gibbs-smartpath} on $\N^r$ has the same distribution as the weights of $v_\alpha$ \cite[Theorem 4.4]{P3}, a standard Gaussian integration by parts argument will show 
\begin{align*}
\abs{\varphi'(t)} &\leq \E \left\langle \norm{\Delta}\cdot\abs{\sS}\max_{s\in\sS}
\norm{Q_{\alpha^1\wedge\alpha^2}^s-\tilde Q_{\alpha^1\wedge\alpha^2}^s}_1\right\rangle_t\\
 &\leq \|\Delta\||\sS| \sum_{0 \leq p \leq r} \max_{s \in \sS} \| Q^s_{\alpha^1 \wedge \alpha^2} - \tilde Q^s_{\alpha^1 \wedge \alpha^2} \|_1 \E \sum_{\alpha^1 \wedge \alpha^2 = p}  v_{\alpha^1} v_{\alpha^2}\\
&= \|\Delta\||\sS| \int_{0}^{1} \max_{s \in \sS} \| \pi^s(x) - \tilde\pi^s(x) \|_1 \, \de x.
\end{align*}
Integrating this inequality yields the result.
\end{proof}

The following argument is well known and follows by a direct computation using properties of
Ruelle Probability Cascades, see \cite{P2,P3}. 
\begin{lem}\label{lem:ctyofy} The functional $f^Y_{M}(\bm \pi)$ is Lipschitz in $\bm \pi$,
\[
\abs{f^Y_{M}(\bm \pi) - f^Y_{M}( \bm{\tilde \pi})} \leq L \int_{0}^{1} \max_{s \in \sS} \| \pi^s(x) - \tilde\pi^s(x) \|_1 \, \de x.
\]
\end{lem}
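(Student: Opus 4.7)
The plan is to adapt the smart-path interpolation from the proof of \prettyref{lem:ctyofz}, applied this time to the single Gaussian process $Y^\alpha$ rather than to the i.i.d.\ copies $(Z_i^\alpha)$. By passing to a common refinement, we may assume that $\bm\pi$ and $\bm{\tilde\pi}$ are both encoded by the same partition $0 = x_{-1} < x_0 < \cdots < x_r = 1$ together with two families $(Q^s_k)_{k=0}^r$ and $(\tilde Q^s_k)_{k=0}^r$ of monotone positive semidefinite matrices sharing the boundary condition $Q^s_r = \tilde Q^s_r = D^s$ for each $s \in \sS$, and we take $(v_\alpha)_{\alpha\in\N^r}$ to be the Ruelle probability cascade weights associated with $(x_0,\ldots,x_{r-1})$.

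Let $Y^\alpha$ and $\tilde Y^\alpha$ be independent centered Gaussian processes with covariance \eqref{eq:y-def} built from $(Q^s_k)$ and $(\tilde Q^s_k)$ respectively, define the interpolating process $Y^\alpha_t := \sqrt{t}\,Y^\alpha + \sqrt{1-t}\,\tilde Y^\alpha$, and set
\[
\psi(t) := \frac{1}{M}\E\log\sum_{\alpha\in\N^r} v_\alpha \exp\bigl(\sqrt{M}\,Y^\alpha_t\bigr),
\]
so that $\psi(1) = f^Y_M(\bm\pi)$ and $\psi(0) = f^Y_M(\bm{\tilde\pi})$. Gaussian integration by parts, together with the standard fact that the $\alpha$-marginal of the tilted Gibbs measure $G_t(\alpha)\propto v_\alpha \exp(\sqrt{M}\,Y^\alpha_t)$ has the same law as $(v_\alpha)$ (cf.\ \cite[Theorem 4.4]{P3}), yields
\[
\psi'(t) = \tfrac{1}{2}\,\E\bigl\langle \Sigma(\alpha^1,\alpha^1) - \Sigma(\alpha^1,\alpha^2)\bigr\rangle_t,
\]
where $\Sigma(\alpha,\beta) := \sum_{s,t\in\sS}\Delta^2_{st}\,\rho^s_N\rho^t_N\bigl[(Q^s_{|\alpha\wedge\beta|},Q^t_{|\alpha\wedge\beta|}) - (\tilde Q^s_{|\alpha\wedge\beta|},\tilde Q^t_{|\alpha\wedge\beta|})\bigr]$.

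The diagonal contribution vanishes identically: $|\alpha\wedge\alpha|=r$ forces $Q^s_r = \tilde Q^s_r = D^s$, so the two Frobenius inner products cancel. For the off-diagonal term, Cauchy--Schwarz on the Frobenius inner product together with the uniform operator-norm bound $\|Q^s_k\|,\|\tilde Q^s_k\|\leq 1$ gives $|\Sigma(\alpha^1,\alpha^2)|\leq L\max_{s\in\sS}\|Q^s_{|\alpha^1\wedge\alpha^2|} - \tilde Q^s_{|\alpha^1\wedge\alpha^2|}\|_1$ for a constant $L = L(\kappa,\sS,\Delta)$. Invoking the RPC overlap identity $\E\sum_{|\alpha^1\wedge\alpha^2|=p} v_{\alpha^1}v_{\alpha^2} = x_p - x_{p-1}$ and integrating in $t\in[0,1]$ gives
\[
|\psi(1)-\psi(0)| \leq L\sum_{p=0}^{r-1}(x_p - x_{p-1})\max_{s\in\sS}\|Q^s_p - \tilde Q^s_p\|_1 = L\int_0^1 \max_{s\in\sS}\|\pi^s(x) - \tilde\pi^s(x)\|_1\,\de x,
\]
which is the desired bound. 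There is no substantial obstacle here: the argument is a direct translation of the standard Parisi-type Lipschitz estimate, with the added bonus that the shared terminal condition $Q^s_r = D^s$ kills the diagonal correction that would otherwise require a separate error term.
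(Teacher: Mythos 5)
Your argument is correct. The paper itself gives no written proof of this lemma: it simply remarks that the statement ``follows by a direct computation using properties of Ruelle Probability Cascades,'' which most naturally means evaluating $f^Y_M(\bm\pi)$ in closed form via the RPC recursion, as is done in \eqref{eq:upbdyterm}, namely $f^Y_M(\bm\pi)=\tfrac12\sum_{\ell}x_\ell\sum_{s,t}\Delta^2_{st}\rho^s\rho^t\bigl((Q^s_{\ell+1},Q^t_{\ell+1})-(Q^s_\ell,Q^t_\ell)\bigr)$; an Abel summation then rewrites this as $\tfrac12\bigl(\phi(D)-\int_0^1\phi(\bm\pi(x))\,\de x\bigr)$ with $\phi(Q)=\sum_{s,t}\Delta^2_{st}\rho^s\rho^t(Q^s,Q^t)$, and the Lipschitz bound is an elementary estimate on $\phi$, with no Gaussian interpolation at all. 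You instead run the smart-path interpolation exactly parallel to the proof of \prettyref{lem:ctyofz}: common refinement of the two discrete paths, $Y^\alpha_t=\sqrt{t}\,Y^\alpha+\sqrt{1-t}\,\tilde Y^\alpha$, Gaussian integration by parts, cancellation of the diagonal term from the shared terminal condition $Q^s_r=\tilde Q^s_r=D^s$, and the RPC overlap identity $\E\sum_{|\alpha^1\wedge\alpha^2|=p}v_{\alpha^1}v_{\alpha^2}=x_p-x_{p-1}$ (justified, as you note, by the tilt-invariance of the cascade weights) to convert the bound into the path metric. This is a perfectly valid alternative route, and it has the merit of being uniform with the proof of \prettyref{lem:ctyofz} and of not requiring the explicit evaluation of $f^Y_M$; the closed-form route is shorter and yields the exact value of the functional as a byproduct. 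Two cosmetic points: your bound on $\Sigma(\alpha^1,\alpha^2)$ is really a H\"older/Frobenius estimate, $|(A,B)|\leq\|A\|_1\max_{k,k'}|B(k,k')|$ (or Cauchy--Schwarz with $\|B\|_F\leq\sqrt{\kappa}$), so the constant picks up a $\kappa$-dependence rather than following from an operator-norm bound alone; and the identification of $\sum_{p\leq r-1}(x_p-x_{p-1})\max_s\|Q^s_p-\tilde Q^s_p\|_1$ with $\Delta(\bm\pi,\bm{\tilde\pi})$ uses the left-continuity convention for the paths together with the vanishing of the $p=r$ term, which you have correctly arranged.
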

We finally observe here that discrete paths are dense in $\pi$.
\begin{lem}\label{thm:discretize} 
For any path $\bm \pi \in \bm \Pi$ and $\epsilon >0$, there exists a finite sequence of points $(x_p)_{p=1}^r$ such that the discrete path
\[
\bm \pi^* (x) := \bm \pi(x_p) \text{ for } x_{p-1} \leq x \leq x_p 
\]
satisfies
\[
\Delta(\bm \pi, \bm  {\pi} ^*) < \epsilon.
\]
\end{lem}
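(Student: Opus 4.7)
The plan is to approximate each coordinate path $\pi^s$ separately by its right-endpoint step function in $L^1([0,1])$, and then take the common refinement of the resulting partitions across species. The key analytic input is that on $\kappa\times\kappa$ PSD matrices the trace and the entrywise $\ell_1$ norm are equivalent: there is a constant $C_\kappa$ with $\tr(A)\leq \|A\|_1 \leq C_\kappa \tr(A)$ for all PSD $A$ (using $|A_{ij}| \leq \tfrac12(A_{ii}+A_{jj})$). Combined with monotonicity of the trace in the PSD order, this yields the useful estimate $\|B-A\|_1 \leq C_\kappa\|C-A\|_1$ whenever $0 \leq A \leq B \leq C$.

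For each $s \in \sS$, the path $\pi^s$ is bounded (since $\pi^s(x) \leq \pi^s(1) \in \Gamma_\kappa$) and $x \mapsto \tr(\pi^s(x))$ is a bounded monotone increasing real function, hence has at most countably many jumps with total size at most $\tr(\pi^s(1))$. Given $\delta > 0$, I include in a finite partition $P_s$ the finitely many jump points at which $\tr(\pi^s)$ increases by at least $\delta$, and subdivide the remaining portion so that $\tr(\pi^s(x_p^s)) - \tr(\pi^s(x_{p-1}^s)) < \delta$ on every remaining subinterval. By the displayed norm estimate, the right-endpoint step function $\pi^{s,*}(x) := \pi^s(x_p^s)$ for $x \in (x_{p-1}^s, x_p^s]$ satisfies $\|\pi^s(x_p^s) - \pi^s(x)\|_1 \leq C_\kappa \delta$ on each subinterval, so $\int_0^1 \|\pi^s - \pi^{s,*}\|_1 \, dx \leq C_\kappa \delta$.

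Finally, I take the common refinement $P = \bigcup_{s \in \sS} P_s$ and define $\bm\pi^*$ by $\bm\pi^*(x) = \bm\pi(x_p)$ on $[x_{p-1}, x_p]$ as in the statement. Refinement can worsen each $L^1$ error by at most a factor of $C_\kappa$: on any subinterval $[x_{p-1}^s, x_p^s]$ of $P_s$ further subdivided by $P$ into $z_0 < z_1 < \cdots < z_m$, PSD monotonicity and the displayed inequality give $\|\pi^s(z_j) - \pi^s(x)\|_1 \leq C_\kappa \|\pi^s(x_p^s) - \pi^s(x)\|_1$ for $x \in (z_{j-1}, z_j]$; integrating and summing yields $\int_0^1 \|\pi^s - \pi^{s,*}\|_1 \, dx \leq C_\kappa^2 \delta$ per species. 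Choosing $\delta = \epsilon/(C_\kappa^2 |\sS|)$ and using $\max_s \leq \sum_s$,
\[
\Delta(\bm\pi, \bm\pi^*) \leq \sum_{s \in \sS} \int_0^1 \|\pi^s(x) - \pi^{s,*}(x)\|_1 \, dx \leq |\sS| \cdot C_\kappa^2 \delta = \epsilon.
\]
The only non-routine step is the $L^1$ step-function approximation of each monotone $\pi^s$ in the second paragraph, where the care needed is to separate out the finitely many sizable jumps of $\tr(\pi^s)$ before subdividing finely; everything else is a common-refinement trick and the triangle-style bound $\max\leq\sum$.
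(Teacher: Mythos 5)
The paper does not actually prove this lemma --- it is stated as a standard density observation just before the Aizenman--Sims--Starr argument --- so there is no proof to compare routes against; your write-up supplies the natural argument, and its overall structure (the equivalence $\tr(A)\le\|A\|_1\le \kappa\,\tr(A)$ on the PSD cone, reduction of the matrix approximation to the bounded monotone scalar function $g_s(x)=\tr(\pi^s(x))$, then a common refinement over species and the bound $\max_s\le\sum_s$) is sound, as is the refinement estimate $\|\pi^s(z_j)-\pi^s(x)\|_1\le C_\kappa\|\pi^s(x_p^s)-\pi^s(x)\|_1$.

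One step, however, is stated in a way that would fail: the claim that, after retaining the finitely many points where $g_s$ jumps by at least $\delta$, one can subdivide so that $g_s(x_p^s)-g_s(x_{p-1}^s)<\delta$ on every remaining subinterval. Since $\pi^s$ is left-continuous, a jump of size $\ge\delta$ at a retained point $a$ sits to the \emph{right} of $a$, so the subinterval $(a,x_p^s]$ immediately following it satisfies $g_s(x_p^s)-g_s(a)\ge\delta$ no matter how finely you subdivide; such a partition simply does not exist when large jumps are present. The harmless fix is to control increments from the right limit rather than from the value at the left endpoint: choose greedily $x_p=\sup\{x>x_{p-1}:\ g_s(x)\le g_s(x_{p-1}^+)+\delta\}$. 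Left-continuity gives $g_s(x_p)\le g_s(x_{p-1}^+)+\delta$, the procedure terminates because $g_s(x_p^+)$ increases by at least $\delta$ at each step while staying below $g_s(1)$, and no separate treatment of large jumps is needed at all. Since $\pi^s(x)\ge \pi^s(x_{p-1}^+)$ for $x\in(x_{p-1},x_p]$ by monotonicity, your trace estimate then yields $\|\pi^s(x_p)-\pi^s(x)\|_1\le C_\kappa\bigl(g_s(x_p)-g_s(x_{p-1}^+)\bigr)\le C_\kappa\delta$ exactly as intended, and the rest of your argument (common refinement, with its extra factor $C_\kappa$ --- which can even be avoided by bounding traces directly on the refined intervals --- and the choice $\delta\asymp\epsilon/(C_\kappa^2|\sS|)$) goes through unchanged.
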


We now state a general continuity theorem regarding functionals of this form. Such results are completely standard, see, e.g., \cite{P2,P3}. Let $S \subset [\kappa]^M$, $(w_\alpha)_{\alpha \in \mathscr{A}}$ be the weights (possibly random) of a probability density distribution on a countable set $\mathscr{A}$, and 
$$R_{\mathscr A} = (R^s_{\alpha^1, \alpha^2})_{s \in \sS, \alpha^1, \alpha^2 \in \mathscr{A}},$$ 
where each $R^s_{\alpha^1, \alpha^2}$ is a $\kappa \times \kappa$ matrix. We think of this array as fixed and non-random. It will be the values that some abstract overlap structure can take. Finally, let 
$(I_s)$ be partition of $[M]$ into species.

We define the functionals
\begin{equation}
f_{1,M} = \frac{1}{M} \E \log \sum_{\alpha \in \mathscr{A}} w_\alpha \sum_{\epsilon \in S} \exp \sum_{i \leq M} Z_i^\alpha(\epsilon_i)
\end{equation}
and
\begin{equation}
f_{2,M} = \frac{1}{M} \E \log \sum_{\alpha \in \mathscr{A}} w_\alpha \exp \sqrt{M} Y^\alpha.
\end{equation}
Here, $Z^\alpha_i(\sigma)$ is a centered Gaussian process is such that, if $i\in I_s\subset [M] $, the covariance structure of the Gaussian vector $Z^\alpha_i = (Z^\alpha_i(k))_{k \leq \kappa}$ is given by
\begin{equation}
\Cov (Z_i^{\alpha^1}, Z_i^{\alpha^2}) = C^{s}_Z((R^s_{\alpha^1, \alpha^2} )_{s \in \sS}).
\end{equation}
for some $C_z^s$, a continuous function of the overlaps $R^s_{\alpha^1, \alpha^2}$. 
Similarly, $Y$ is a centered Gaussian process with covariance given by
\begin{equation}
\Cov (Y^{\alpha^1}, Y^{\alpha^2}) = C_Y((R^s_{\alpha^1, \alpha^2} )_{s \in \sS}).
\end{equation}
The following result is standard and follows from basic properties of Gaussian processes, and the fact that 
log-sums of exponentials have at most linear growth at infinity. Let $(\alpha(\ell))_{\ell\geq1}$ be i.i.d. drawn from 
$\mathscr{A}$ with law $w_\alpha$, and denote
\[
 R^n =\bigl( R^s_{\alpha(\ell), \alpha(\ell')} \bigr)_{ \ell, \ell' \in [n], s \in \sS}.
\] 
We have the following continuity property
\begin{lem}\label{thm:approximation}
For any $\epsilon > 0$, there are continuous bounded functions $g^Z_{\epsilon}$ and $g^Y_\epsilon$ 
such that
\begin{equation}
| f_{1,M}  - \E g^Z_\epsilon (R^n)| \leq \epsilon, \qquad | f_{2,M} - \E g^Y_\epsilon (R^n)| \leq \epsilon.
\end{equation}
These functions depend at most on $M,S,C_z, C_y$, and $\eps$.
\end{lem}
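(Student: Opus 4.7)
The plan is to approximate $\log$ by a polynomial in $Z$ and then evaluate moments of $Z$ via Gaussian integration by parts, which reduces each moment to an expectation of a continuous bounded function of the overlap array $R^n$. Since $f_{1,M}$ and $f_{2,M}$ are proportional to $\E \log Z$ for appropriate partition functions, taking a polynomial approximation of $\log$ on a high-probability interval and summing the corresponding moment representations yields the desired $g^Z_\epsilon$ and $g^Y_\epsilon$.

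Step 1: I compute moments as functions of $R^n$. For $f_{1,M}$, write $Z := \sum_{\alpha \in \mathscr A} w_\alpha \sum_{\epsilon \in S} \exp H(\alpha,\epsilon)$ with $H(\alpha,\epsilon)=\sum_i Z_i^\alpha(\epsilon_i)$. Conditionally on the overlaps and weights, $H$ is centered Gaussian with covariance $\sum_i C_Z^{s(i)}\bigl((R^s_{\alpha,\alpha'})_s\bigr)(\epsilon_i,\epsilon'_i)$, so the Gaussian MGF gives
\[
\E_H Z^n \;=\; \sum_{\vec\alpha,\vec\epsilon}\prod_l w_{\alpha_l}\,\exp\!\Bigl(\tfrac12\sum_{l,m}\sum_i C_Z^{s(i)}\bigl(R^s_{\alpha_l,\alpha_m}\bigr)(\epsilon_{l,i},\epsilon_{m,i})\Bigr).
\]
Recognizing that draws from $w$ produce the array $R^n$, and that $S$ is finite, this rewrites as $\E Z^n = \E \tilde g_n(R^n)$ with $\tilde g_n$ continuous and, because the overlaps lie in the compact set $\cR_\kappa$ on which $C_Z$ is bounded, also bounded. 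The same computation with $H$ replaced by $\sqrt{M}Y^\alpha$ yields an analogous $\tilde h_n(R^n)$ for $f_{2,M}$.

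Step 2: I approximate $\log$ by a polynomial. Gaussian concentration (e.g.\ Borell's inequality applied to $\log Z$ as a Lipschitz function of the underlying Gaussian field, or direct tail bounds on the first two moments of $Z$) shows that $\log Z$ is subgaussian around its mean, and that there is an interval $[a,A]$ (depending on $M$, $S$, the covariance kernels $C_Z,C_Y$, and $\epsilon$) on which $Z$ lies with probability $1-\eta$ for any prescribed $\eta>0$. On $[a,A]$, the Weierstrass approximation theorem provides a polynomial $P(x)=\sum_{k\le n} c_k x^k$ with $\sup_{x\in[a,A]}|\log x - P(x)|<\epsilon/2$. Then
\[
\E P(Z) \;=\; \sum_{k\le n} c_k\,\E Z^k \;=\; \E\!\Bigl(\sum_{k\le n} c_k \tilde g_k(R^n)\Bigr) \;=:\; \E g^Z_\epsilon(R^n),
\]
and $g^Z_\epsilon$ is continuous and bounded as a finite combination of such functions. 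The same argument, using $\tilde h_k$, produces $g^Y_\epsilon$.

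Step 3: I control the total error $|\E\log Z - \E P(Z)|$ by splitting into $\{Z\in[a,A]\}$, where the polynomial approximation gives error at most $\epsilon/2$, and the complementary event, where subgaussianity of $\log Z$ (together with polynomial growth of $P$) makes the contribution $O(\eta)$ with $\eta$ chosen small. Dividing by $M$ and rescaling $\epsilon$ suitably yields the stated bounds on $|f_{1,M}-\E g^Z_\epsilon(R^n)|$ and $|f_{2,M}-\E g^Y_\epsilon(R^n)|$.

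The main obstacle is that $\mathscr A$ may be infinite and $\log$ is unbounded, so the polynomial approximation of $\log$ is only uniform on a bounded interval. This is exactly why the concentration step is crucial: one needs to confine $Z$ to $[a,A]$ with overwhelming probability and then argue that the tails contribute negligibly. Every other ingredient (Gaussian integration by parts, continuity of $C_Z$ and $C_Y$ on the compact overlap space, finiteness of $S$) is essentially algebraic and makes the identification of the approximating function $g^{\ast}_\epsilon$ as a finite sum of continuous bounded functions of $R^n$ straightforward.
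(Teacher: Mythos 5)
Your Step 1 is fine: conditionally on the weights and the array, the Gaussian moment generating function does express $\E Z^k$ as $\E\,\tilde g_k(R^k)$ with $\tilde g_k$ continuous and bounded (continuity of $C_Z,C_Y$ on the compact overlap space, finiteness of $S$), and a $w^{\otimes k}$-average is exactly an average over $k$ sampled replicas. The gap is in Step 3, and it is not a technicality. The variable $Z$ has log-normal-type tails: its moments grow like $\E Z^k\leq |S|^k e^{CMk^2}$, while $\P(Z>A)$ decays only like $\exp\bigl(-c(\log A)^2\bigr)$. On the other hand, because $\log x$ has a singularity at $0$, any polynomial $P$ that approximates $\log$ uniformly within $\epsilon/2$ on $[a,A]$ must have degree $d$ growing at least like a power of $A/a$. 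Your tail estimate then reads, via Cauchy--Schwarz, $\E\bigl[|P(Z)|\1(Z\notin[a,A])\bigr]\lesssim e^{Cd^2}\sqrt{\P(Z\notin[a,A])}$, and with $d$ growing exponentially in $\log A$ this bound explodes rather than being ``$O(\eta)$'': to tame $\E[Z^d\1(Z>A)]$ one would need $\log A\gtrsim d$, which is incompatible with the degree required to approximate $\log$ on $[a,A]$. The circularity (interval determines polynomial, polynomial forces a much larger interval) is not resolved in your outline, and it cannot be resolved by choosing $\eta$ ``small enough'' at the end; truncating $Z$ before applying $P$ would fix the tails but destroys the moment-to-overlap representation that the whole scheme rests on. The moment-indeterminacy of log-normal-type variables is a further warning sign that approximating $\E\log Z$ by a fixed finite linear combination of moments, uniformly over all admissible $(w_\alpha)$ and arrays, is exactly the delicate point, and it is the point your argument does not close.

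Two further remarks. First, the lemma requires $g^Z_\epsilon,g^Y_\epsilon$ to depend only on $M,S,C_Z,C_Y,\epsilon$; in your construction the interval $[a,A]$ (hence $P$, hence $g$) is built from the mean and fluctuations of $\log Z$, so you must check these are controlled uniformly over the underlying random measure and array (this part is fixable: Jensen gives $0\leq \E\log Z\leq \log|S|+\tfrac{M}{2}\sup|C_Z|$, and the Lipschitz/concentration constant of $\log Z$ depends only on $M$ and $\sup|C_Z|$), but it is not addressed. Second, the paper's intended (standard) route avoids polynomials altogether: one conditions on $n$ replicas sampled i.i.d.\ from $w$, notes that the Gaussian fields restricted to these replicas have a law which is a continuous function of $R^n$, and compares $\E\log\sum_\alpha w_\alpha(\cdots)$ with $\E\log\frac1n\sum_{\ell\leq n}(\cdots)$; one direction is Jensen, and the other uses Gaussian concentration together with the at most linear growth of log-sums of exponentials, giving an error uniform in $(w,R)$ that vanishes as $n\to\infty$. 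If you want to keep your own strategy you would have to replace the polynomial calculus by such a truncation-free comparison; as written, the proof does not go through.
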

\section{Lower Bound via an Aizenman-Sims-Starr Scheme}\label{sec:Aizenman-LB}
For fixed $d \in \sD$ and a sequence of realizable proportions $d_N \in \sD_N$ \eqref{eq:feasible-constraint-space} converging to $d$, we prove the matching constrained lower bound, 
\begin{equation}\label{eq:lowerbound}
\liminf_{N \to \infty} \frac{1}{N} \E \log Z_N(d_N) \geq \inf_{x, r,(\lambda^s, Q^s)_{s \in \sS}} \sP\bigl(r, x, d, (\lambda^s, Q^s)_{s \in \sS}\bigr).
\end{equation}
By part (1) of \prettyref{thm:MSPOTTSGG}, computing the lower bound of the free energy with respect to 
\begin{equation}\label{eq:perthamiltonian}
H^{pert}_N(\sigma) = H_N(\sigma) + s_N h_N(\sigma)
\end{equation}
introduced in Section \ref{sec:perturbation} and the corresponding constrained partition function
\begin{equation}\label{eq:perturbed-partition-function}
Z_N(d_N) = \sum_{\sigma \in \Sigma_N(d_N)} \exp\bigl(H^{pert}_N(\sigma) \bigr).
\end{equation}
is equivalent to computing the lower bound in \eqref{eq:lowerbound}. We will continue to work with the perturbed Hamiltonian \eqref{eq:perthamiltonian} throughout the remainder of this section.

For $M \geq 1$, our starting point is the following inequality,
\begin{equation}\label{eq:ASSSCHEME}
\liminf_{N \to \infty}\frac{1}{N} \E\log Z_N(d_N) \geq \liminf_{N \to \infty} \frac{1}{M} \bigl( \E \log Z_{N + M} (d_{N + M}) - \E \log Z_N(d_N) \bigr).
\end{equation}
We will write the free energy in terms of the Gibbs measure $G_{d_N}^{pert}$ defined in \eqref{eq:Perturbed-Gibbs-Measure} using the cavity method via an Aizenman-Sims-Starr scheme \cite{AS2}. To this end, let us denote a configuration $\tilde \rho\in[\kappa]^{N+M}$ by $\tilde \rho =(\epsilon, \sigma)$, where $\epsilon = (\epsilon_1, \dots, \epsilon_M) \in [\kappa]^M$ are called the cavity coordinates and $\sigma = (\sigma_{M + 1}, \dots, \sigma_{N + M}) \in [\kappa]^N$ are called the bulk coordinates. We define $\mathcal{M}_s$, $\mathcal{N}_s$ to be the subset of the respective cavity and bulk coordinates that belong to species $s$. Let $M_s$, $N_s$ be the cardinality of $\mathcal{M}_s$ and $\mathcal{N}_s$. 

We control the rate of convergence of $d_N$ so that for some constant $L_\kappa$,
\begin{equation}\label{eq:convergence-proportions}
| d^s_{N,k} - d_k^s | \leq \frac{L_\kappa}{N_s}~\text{for all $k \leq \kappa$, $s \in \sS$} \text{ and $d^s_{N,k} = 0$ if $d_k^s = 0$}.
\end{equation}
A generalized version of \cite[Lemma 11]{P2} will allow us to split constrained configuration space into a product set of the bulk coordinates and the species wise cavity coordinates along a subsequence.

We first introduce some more notation. Let $\mathcal{A} \subset N + M$ and $\mathcal{A}_s$ is the subset of $\mathcal{A}$ in species $s$. If $A := |\mathcal{A}|$ is the cardinality of the set $\mathcal{A}$, we define 
\begin{equation}\label{def:subset-proportions}
\Sigma_{A}(d) := \Bigl \{ (\sigma_i)_{i \in A} \mathrel{}\Bigm{|} \mathrel{} \sum_{i \in \mathcal{A}_s} \1(\sigma_i = k) = A_s d^s_{k}, \forall k \leq \kappa, s \in \sS \Bigr\}
\end{equation}
to be the configurations of spins in $\mathcal{A}$ that satisfy the constraint $d$. If $\mathcal{A} = N + M$, then \eqref{def:subset-proportions} coincides with \eqref{eq:constrainedspace}. The definition in \eqref{def:subset-proportions} naturally implies
\begin{equation}
\Sigma_A(d) = \prod_{s \in \sS} \Sigma_{A_s}(d^s) \label{lem16temp}.
\end{equation}
If we use the sets $\mathcal{N}$ and $\mathcal{M}$ in place of $\mathcal{A}$ in \eqref{lem16temp}, the observations means we can break the cavity and bulk coordinates into a product set over species. We will now show that the entire constrained system $\Sigma_{N+ M}(d)$ contains a product set over the bulk and cavity coordinates.

\begin{lem}
	For every $M > 0$, there exists a constraint $\delta_{M} \in \sD_{M}$ such that 
	\begin{equation}
	|\delta^s_{M,k} - d_k^s| \leq \frac{2 L_\kappa}{M_s} \text{ for all $k \leq \kappa$, $s \in \sS$},
	\end{equation}
	and we can find a subsequence of $N$ such that
	\begin{equation}\label{eq:Separate_into_product_sets}
	\Sigma_{N + M}(d_{N + M}) \supseteq \Sigma_{N}(d_{N}) \times \Sigma_{M} (\delta_{M}).
	\end{equation}
\end{lem}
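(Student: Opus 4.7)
The idea is to reverse-engineer $\delta_M$ from the requirement that the product configuration $\tilde\rho=(\epsilon,\sigma)\in\Sigma_{N+M}(d_{N+M})$. Counting spins by species-and-value class, this requirement is exactly the balance identity
\begin{equation*}
(N_s+M_s)d^s_{N+M,k}=N_s\,d^s_{N,k}+M_s\,\delta^s_{M,k}\qquad\text{for all }s\in\sS,\ k\leq\kappa.
\end{equation*}
So the natural candidate, depending on $N$, is
\begin{equation*}
\delta^{(N),s}_{M,k}:=\frac{(N_s+M_s)d^s_{N+M,k}-N_s\,d^s_{N,k}}{M_s}.
\end{equation*}
The plan is: (i) verify this is a valid element of $\sD_M$ along a suitable subsequence; (ii) obtain the quantitative bound; (iii) use pigeonhole to make the candidate constant along a further subsequence; (iv) read off the inclusion from the balance identity.

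\textbf{Valid proportions.} Because $d_N\in\sD_N$ and $d_{N+M}\in\sD_{N+M}$, the quantities $(N_s+M_s)d^s_{N+M,k}$ and $N_s d^s_{N,k}$ are non-negative integers, so $M_s\,\delta^{(N),s}_{M,k}$ is automatically an integer. Summing over $k$ uses $\sum_k (N_s+M_s)d^s_{N+M,k}=N_s+M_s$ and $\sum_k N_s d^s_{N,k}=N_s$, yielding $\sum_k M_s\,\delta^{(N),s}_{M,k}=M_s$. For non-negativity I rewrite
\begin{equation*}
M_s\,\delta^{(N),s}_{M,k}=M_s\,d^s_{N+M,k}+N_s\bigl(d^s_{N+M,k}-d^s_{N,k}\bigr),
\end{equation*}
and apply \eqref{eq:convergence-proportions}: the error term is bounded in absolute value by $2L_\kappa$, while the main term is non-negative and converges to $M_s d^s_k$. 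When $d^s_k=0$, the standing hypothesis gives $d^s_{N,k}=d^s_{N+M,k}=0$, so $\delta^{(N),s}_{M,k}=0$; when $d^s_k>0$, one has $M_s d^s_{N+M,k}\geq 2L_\kappa$ for all sufficiently large $N$ provided $M$ (hence $M_s$) is large enough, which is all that is needed since in the Aizenman--Sims--Starr scheme $M\to\infty$.

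\textbf{Quantitative bound and pigeonhole.} Writing
\begin{equation*}
\delta^{(N),s}_{M,k}-d^s_k=\frac{(N_s+M_s)\bigl(d^s_{N+M,k}-d^s_k\bigr)-N_s\bigl(d^s_{N,k}-d^s_k\bigr)}{M_s}
\end{equation*}
and applying \eqref{eq:convergence-proportions} twice gives the required estimate $|\delta^{(N),s}_{M,k}-d^s_k|\leq 2L_\kappa/M_s$, uniformly in $N$. Since each $M_s\,\delta^{(N),s}_{M,k}$ is an integer in $\{0,1,\dots,M_s\}$, the vector $\delta^{(N)}_M$ takes values in a finite subset of $\sD_M$, so by pigeonhole we may extract a subsequence of $N$ along which $\delta^{(N)}_M$ is constant, equal to some $\delta_M\in\sD_M$ with the required bound.

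\textbf{Inclusion.} For $N$ in this subsequence, let $\sigma\in\Sigma_N(d_N)$ and $\epsilon\in\Sigma_M(\delta_M)$ and set $\tilde\rho=(\epsilon,\sigma)$. Then, by the defining balance identity,
\begin{equation*}
\sum_{i\in I^{N+M}_s}\1(\tilde\rho_i=k)=M_s\,\delta^s_{M,k}+N_s\,d^s_{N,k}=(N_s+M_s)\,d^s_{N+M,k},
\end{equation*}
so $\tilde\rho\in\Sigma_{N+M}(d_{N+M})$, proving the inclusion. The main technical hurdle is ensuring the non-negativity of $\delta^{(N),s}_{M,k}$, which is what forces the passage to a subsequence and tightly couples the convergence rate hypothesis \eqref{eq:convergence-proportions} to the choice of $\delta_M$; once this is in hand the rest of the argument is bookkeeping with the balance identity.
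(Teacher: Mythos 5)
Your argument is correct in substance, and it is genuinely different in character from the paper's proof: the paper does not reprove the combinatorial fact at all, but instead invokes Panchenko's single-species statement (\cite[Lemma 11]{P2}) once per species and extracts nested subsequences, one species at a time. You instead give a self-contained argument: define the candidate $\delta^{(N),s}_{M,k}$ by the balance identity, check integrality and normalization from feasibility of $d_N$ and $d_{N+M}$, get the bound $|\delta^{(N),s}_{M,k}-d^s_k|\le 2L_\kappa/M_s$ directly from \eqref{eq:convergence-proportions}, and stabilize the whole finite-valued vector $(\delta^{(N),s}_{M,k})_{s,k}$ by a single pigeonhole rather than species-by-species extraction. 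This buys transparency (the reader sees exactly why the rate hypothesis \eqref{eq:convergence-proportions} yields the $2L_\kappa/M_s$ bound, and why the inclusion is pure bookkeeping), at the cost of redoing what the cited lemma already packages.

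The one substantive deviation is your non-negativity step: you only obtain $\delta^{(N),s}_{M,k}\ge 0$ when $d^s_k>0$ by requiring $M_s d^s_{N+M,k}\ge 2L_\kappa$, i.e.\ $M_s$ (hence $M$) large enough relative to $2L_\kappa/\min\{d^s_k: d^s_k>0\}$, and for $N$ large along the subsequence. The lemma as stated claims a valid $\delta_M\in\sD_M$ for \emph{every} $M>0$, so your proof establishes a weaker statement. You flag this honestly, and it is harmless for the way the lemma is used (the Aizenman--Sims--Starr argument only needs $M\to\infty$ at the end), but if you want the literal statement you must either handle small $M$ separately or fall back on the cited \cite[Lemma 11]{P2} as the paper does. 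Everything else — the integrality and summation checks, the identity giving the $2L_\kappa/M_s$ estimate, the pigeonhole, and the verification that the balance identity yields $\Sigma_{N}(d_N)\times\Sigma_M(\delta_M)\subseteq\Sigma_{N+M}(d_{N+M})$ along the subsequence — is sound and matches the spirit of the paper's argument.
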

\begin{proof}
	We first fix $s \in \sS$ and apply \cite[Lemma 11]{P2} to the subset of spins in $\Sigma_{N + M}$ belonging to species $s$. There exists a sequence $\delta^s_{M} \in \sD_{M}$ such that $|\delta^s_{M,k} - d^s_k| \leq \frac{2 L_\kappa}{M_s}$ for all $k \leq \kappa$, and 
	\begin{equation}\label{eq:additive}
	N_s d^s_N + M_s \delta^s_{M} = (N_s + M_s) d^s_{N + M}
	\end{equation}
	 for infinitely many $N_s$. Therefore, we can find a subsequence of $N$ such that that associated $N_s$ satisfies \eqref{eq:additive} and 
	\[
	\Sigma_{N_s + M_s} (d_{N + M}) \supseteq \Sigma_{N_s}(d_N) \times \Sigma_{M_s}(\delta_{M}).
	\]
	Repeating the argument over each species and extracting a further subsequence each iteration, we can conclude
	\[
	\prod_{s \in \sS} \Sigma_{N_s + M_s} (d_{N + M}) \supseteq \prod_{s \in \sS} \Sigma_{N_s}(d^s_N) \times \prod_{s \in \sS} \Sigma_{M_s}(\delta^s_M).
	\]
	Writing the product sets using the observation in \eqref{lem16temp} completes the proof.
\end{proof}
With this observation, we can restrict the first sum in \eqref{eq:ASSSCHEME} to the product set \prettyref{eq:Separate_into_product_sets}, so that \eqref{eq:ASSSCHEME} is bounded below by
\begin{equation}\label{eq:LBstep1}
\frac{1}{M} \bigg( \E \log \sum_{\sigma \in \Sigma_N(d_N)} \sum_{\epsilon \in \Sigma_{M}(\delta_{M})} \exp \bigl(H^{pert}_{N + M} (\sigma, \epsilon) \bigr) - \E \log \sum_{\sigma \in \Sigma_N(d_N)} \exp \bigl(H^{pert}_N(\sigma) \bigr) \bigg).
\end{equation}

We will now separate the unperturbed portion of the Hamiltonians into its cavity fields,
\begin{align}
H_{N + M} (\epsilon,\sigma) &= H_N'(\sigma) + \sum_{i \leq M} Z_{N,i}^\sigma(\epsilon_i) + r (\epsilon), \\
H_{N} (\sigma) &= H_N'(\sigma) + \sqrt{M} Y_N^\sigma.
\end{align}
These cavity fields are the same as those appearing in \cite[Equation (107) and (109)]{P2}, except with different covariance structure because of the inhomogeneity. In our case, these fields are independent Gaussian processes with covariance 
\begin{alignat}{2}
\E H'_N(\sigma^\ell)H'_N(\sigma^{\ell'}) &= \frac{N^2}{N + M} \sum_{s,t \in \sS} \Delta^2_{s,t} \rho_N^s \rho_N^t ( R^s_{\ell,\ell'},  R^t_{\ell,\ell'}),\\
\E Z_{N,i}^{\sigma^\ell}(\epsilon) Z_{N,j}^{\sigma^{\ell'}}(\epsilon') &= 2 \delta_{\{i = j\}} \sum_{s \in \sS} \Delta^2_{s(i),s} \rho_N^s R^s_{\ell,\ell'}(\epsilon, \epsilon') + O(N^{-1}) , \\
\E Y_N^{\sigma^\ell}Y_N^{\sigma^{\ell'}} &= \sum_{s,t \in \sS} \Delta^2_{s,t} \rho_N^s \rho_N^t ( R^s_{\ell,\ell'},  R^t_{\ell,\ell'}) + O(N^{-1}).
\end{alignat}
Here $\rho_N^s := \frac{\sum_{M + 1 = 1}^{N + M} \1(i \in I_s)}{N}$ is the proportion of the bulk coordinates that belong to species $s$. This distinction is not critical because this proportion converges to the same $\rho^s$ defined in \eqref{eq:proportion-def}. The $r(\epsilon)$ term is $O(N^{-1})$ and can be omitted without affecting \eqref{eq:lowerbound}. 

By a standard interpolation argument, see for example \cite[Theorem 3.6]{P3}, if $(z^\sigma_{N,i}(\epsilon))_{i \leq M}$ and $y_N^\sigma$ are centered Gaussian processes with covariances
\begin{align}
	\E z_{N,i}^{\sigma^\ell}(\epsilon) z_{N,j}^{\sigma^{\ell'}}(\epsilon') &= 2 \delta_{\{i = j\}} \sum_{s \in \sS} \Delta^2_{s(i),s} \rho^s R^s_{\ell,\ell'}(\epsilon, \epsilon'),\label{eq:cov-gibbs-z}\\
	\E y_N^{\sigma^\ell}y_N^{\sigma^{\ell'}} &= \sum_{s,t \in \sS} \Delta^2_{s,t} \rho^s \rho^t ( R^s_{\ell,\ell'},  R^t_{\ell,\ell'})\label{eq:cov-gibbs-y},
\end{align}
then \eqref{eq:LBstep1} is bounded below by
\begin{equation}\label{eq:LBstep2}
	\frac{1}{M}\E \log \bigg \langle \sum_{\epsilon \in \Sigma_{M}(\delta)} \exp{\sum_{i \leq M} z^\sigma_{N,i} (\epsilon)} \bigg \rangle_{G'_N} -\frac{1}{M} \E \log \bigg\langle \exp{\sqrt{M} y^\sigma_N} \bigg\rangle_{G'_N} + o(1).
\end{equation}
Here $\langle \cdot \rangle_{G_N'}$ is the average in $\sigma$ with respect to the perturbed Gibbs measure \eqref{eq:Perturbed-Gibbs-Measure}.
By \prettyref{thm:approximation}, the functionals appearing in \eqref{eq:LBstep2} are continuous functionals of the distribution of the overlap matrix array 
\begin{equation}
 R^N = \bigl(R^s_{\ell, \ell'}\bigr)_{\ell, \ell' \geq 1, s \in \sS}
\end{equation}
of i.i.d. draws from the perturbed Gibbs measure $G_{d_N}^{pert}$. We will now relate \eqref{eq:LBstep2} to the Ruelle Probability Cascades allowing us to compute its value explicitly.

In the following, we take a subsequence along which the limit inferior of \prettyref{eq:LBstep2} is achieved. Since $\cR$ is compact we may take a subsequential weak limit of $R^N$ along our minimizing subsequence, which we denote by $R^\infty$. For ease of notation, we will continue to denote this subsequence with $N$. By the choice of the perturbation Hamiltonian, \prettyref{thm:MSPOTTSGG}, we have the limiting array $R^\infty$ satisfies equation \eqref{eq:MSPGGI}. By the characterization theorem, \prettyref{thm:IP-characterization}, we see the order parameter for this system, that is the quantity which determines the law of the system, will be the law of $\tr (\bar R^\infty_{12})$. 

With this in mind, we make the following approximation. The array, $(\tr(\bar R^\infty_{\ell,\ell'}))_{\ell,\ell' \geq 1}$, by definition of $\cR$, is a Gram-De Finetti array. Furthermore, taking $\nu^s_n = \sqrt{\rho^s} e_n$, $p_s = 1$, and $\phi := \sum_{s \in \sS} \sum_{i \leq \kappa} (\nu^s_i, R^{s,\infty}_{\ell,\ell'} \nu^s_i) $ 
in \prettyref{eq:MSPGGI} the array $\bigl(\tr(\bar R^\infty_{\ell,\ell'}) \bigr)_{\ell,\ell' \geq 1}$ also satisfies the classical Ghirlanda-Guerra identities. By \cite[Thereom 2.13]{P3}, the law of $(\tr(\bar R^\infty_{\ell,\ell'}))_{\ell, \ell' \geq 1}$ is uniquely determined by $\zeta$, the law of $\tr \bigl(\bar R^\infty_{1,2} \bigr)$.
Let $\zeta^n\to\zeta$ weakly in $\Pr[0,1]$, such that $\zeta^n$ consists of a finite number of atoms. This yields sequences 
\begin{align}
x^n_{-1}= 0 &< x^n_0 < \ldots < x^n_{r-1} < x^n_r = 1,
\label{eq:RPC-sequences}
\\
0 &=q^n_0 \, <  \ldots  < q^n_{r-1} < q^n_r = 1,
\nonumber
\end{align}
such that $\zeta^n([0,q_p^n])=x_p^n$. By \cite[Theorem 2.17]{P3}, if $\tr \bigl(\bar R^n_{1,2} \bigr)$ has distribution $\zeta^n$, then the approximating array of traces $ \bigl( \tr (\bar R^n_{1,2}) \bigr)_{\ell, \ell' \geq 1}$ converge weakly to $(\tr(\bar R^\infty_{\ell,\ell'}))_{\ell, \ell' \geq 1}$.

Let $v_\alpha$ be the weights of the Ruelle probability cascades associated with the sequence $(x^n)$ in \eqref{eq:RPC-sequences}. If $(a_\ell)_{\ell \geq 1}$ are i.i.d samples from $\N^r$ according to $v_\alpha$ then it is well known \cite[Section 3.6]{P3} that
$T^n_{\ell,\ell'} = q^n_{\alpha^\ell \wedge \alpha^{\ell'}}$ will be close in distribution to $ \bigl(\tr( \bar R^\infty_{\ell,\ell'}) \bigr)_{\ell,\ell' \geq 1}$. More precisely,
\begin{equation}\label{eq:approx-trace}
\bigl(T^n_{\ell,\ell'} \bigr)_{\ell,\ell' \geq 1} \stackrel{d}{\to} \bigl( \tr( \bar R^\infty_{\ell,\ell'}) \bigr)_{\ell,\ell' \geq 1} .
\end{equation}

Since the limiting array $R^\infty$ is IP-Invariant \eqref{eq:MSPGGI}, by \prettyref{thm:IP-characterization} we can find a family of Lipschitz functions $(\Phi_s)_{s \in \sS}$ on $[0,1]$ such that
\[
 R^\infty = (R_{\ell,\ell'}^{s,\infty})_{\ell,\ell' \geq 1, s \in \sS} =  \bigl( \Phi_s \bigl( \tr( \bar R^\infty_{\ell,\ell'}) \bigr) \bigr)_{\ell,\ell' \geq 1, s \in \sS}.
\]
Since $\Phi_s$ is Lipshitz, \eqref{eq:approx-trace} implies the $\kappa \times \kappa$ matrices $Q^{s,n}_{\ell,\ell'} := \Phi_s (T^n_{\ell,\ell'} )$ will be close in distribution to $R^{s,\infty}_{\ell,\ell'}$. If we let $ Q^n := \bigl(Q^{s,n}_{\ell,\ell'} \bigr)_{\ell,\ell' \geq 1, s \in \sS}$ denote the approximating array generated from \iid samples under Ruelle probability cascades corresponding to order parameter $\zeta^n$ then \eqref{eq:approx-trace} also implies
\begin{equation}\label{eq:convergence-Q}
 Q^n  \stackrel{d}{\to}  R^\infty.
\end{equation}

For $n$ sufficiently large, we will bound \eqref{eq:LBstep2} arbitrarily closely with functionals of the infinite array $Q^n$, which we will now show. Let $\big( (Z_{i,n}^{\alpha}(k))_{k\leq\kappa} \big)_{i \leq M}$ and $Y_n^\alpha$ be 
centered Gaussian processes with covariances
\begin{align}
\E Z_{i,n}^{\alpha}Z_{j,n}^{\beta} &= 2 \delta_{\{i = j\}} \sum_{t \in \sS}\Delta_{s(i),t}^{2}\rho^{t}Q_{\abs{\alpha\wedge\beta}}^{t,n}, \label{eq:cov-RPC-z}\\
\E Y_n^{\alpha}Y_n^{\beta} &=\sum_{s,t \in\sS}\Delta_{s,t}^{2}\rho^{s}\rho^{t} 
\left(Q_{\abs{\alpha\wedge\beta}}^{s,n},Q_{\abs{\alpha\wedge\beta}}^{t,n}\right) \label{eq:cov-RPC-y}.
\end{align}
Notice that the covariance structure in \eqref{eq:cov-RPC-z} and \eqref{eq:cov-RPC-y} depend on the overlap array in exactly the same way as \eqref{eq:cov-gibbs-z} and \eqref{eq:cov-gibbs-y}. A direct application of the continuity in \prettyref{thm:approximation} and the convergence in distribution of $ Q^n$ in \eqref{eq:convergence-Q} will imply the following result.

\begin{lem}\label{lem:RPCLWBD} 
	For every $\tilde \epsilon > 0$, there is an $n > 0$ such that 
	\begin{align}
	& \liminf_{N \to \infty} \frac{1}{M}\E \log \bigg \langle \sum_{\epsilon \in \Sigma_{M}(\delta)} 
	\exp{\sum_{i \leq M} z^\sigma_{N,i} (\epsilon)} \bigg \rangle_{G'_N} 
	-\frac{1}{M} \E \log \bigg\langle \exp{\sqrt{M} y^\sigma_N} \bigg\rangle_{G'_N} \notag\\
	&\qquad\geq \frac{1}{M} \E \log \sum_{\alpha \in \N^r} v_\alpha 
	\sum_{\epsilon \in \Sigma_M(\delta)} \exp \sum_{i \leq M} Z_{i,n}^\alpha (\epsilon_i) 
	- \frac{1}{M} \E \log \sum_{\alpha \in \N^r} v_\alpha  \exp \sqrt{M} Y_n^\alpha- \tilde \epsilon. \label{eq:temp4}
	\end{align}
\end{lem}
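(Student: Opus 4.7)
The plan is to reduce both the Gibbs-measure free energy on the left of \eqref{eq:temp4} and the Ruelle probability cascade free energy on the right to the \emph{same} continuous bounded functional of their respective overlap arrays via \prettyref{thm:approximation}, and then invoke the two weak-convergence facts already in hand, namely $R^N \to R^\infty$ (subsequentially) and $Q^n \to R^\infty$ from \eqref{eq:convergence-Q}, to make both functionals arbitrarily close to $\E g(R^\infty)$.

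More precisely, fix $\tilde\epsilon > 0$. The covariance structure \eqref{eq:cov-gibbs-z}--\eqref{eq:cov-gibbs-y} of the cavity fields $(z^\sigma_{N,i}, y^\sigma_N)$ and the covariance structure \eqref{eq:cov-RPC-z}--\eqref{eq:cov-RPC-y} of the RPC fields $(Z^\alpha_{i,n}, Y^\alpha_n)$ are given by the \emph{same} continuous functions $C^s_Z, C_Y$ of the overlap array. Therefore \prettyref{thm:approximation}, applied once with the overlap array $R^N$ coming from i.i.d.\ draws under the perturbed Gibbs measure $G^{pert}_{d_N}$, and once with the overlap array $Q^n$ coming from i.i.d.\ draws under the Ruelle probability cascade with parameter $\zeta^n$, produces a single pair of continuous bounded functions $g^Z_\eta, g^Y_\eta$ (with $\eta = \tilde\epsilon/4$) such that each of the four free energies in \eqref{eq:temp4} differs from $\E g^{Z}_\eta$ or $\E g^{Y}_\eta$ evaluated on the appropriate overlap array by at most $\tilde\epsilon/4$.

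Next, since $g^Z_\eta, g^Y_\eta$ are bounded and continuous in the product topology on the countable product of $[0,1]$'s (the topology of $\cR_\kappa$), the weak convergence $R^N \to R^\infty$ along our chosen subsequence gives
\begin{equation*}
\lim_{N \to \infty} \E g^Z_\eta(R^N) = \E g^Z_\eta(R^\infty), \qquad \lim_{N\to\infty}\E g^Y_\eta(R^N) = \E g^Y_\eta(R^\infty),
\end{equation*}
and the convergence $Q^n \to R^\infty$ from \eqref{eq:convergence-Q} gives
\begin{equation*}
\lim_{n \to \infty} \E g^Z_\eta(Q^n) = \E g^Z_\eta(R^\infty), \qquad \lim_{n \to \infty} \E g^Y_\eta(Q^n) = \E g^Y_\eta(R^\infty).
\end{equation*}
Choosing $n$ large enough that each of the last two differences is at most $\tilde\epsilon/4$ and chaining the four approximations by the triangle inequality delivers \eqref{eq:temp4}.

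The main subtlety is verifying that \prettyref{thm:approximation} really produces a \emph{common} pair of test functions $g^Z_\eta, g^Y_\eta$ for both the Gibbs and the RPC sides. This is the reason the approximation lemma is stated abstractly in terms of a weighted family $(w_\alpha)_{\alpha\in\mathscr{A}}$ and a fixed covariance kernel: once $C^s_Z$ and $C_Y$ are specified as continuous functions of the overlap entries, the resulting test functions depend only on $M$, on the constrained set $\Sigma_M(\delta)$, on $C^s_Z, C_Y$, and on $\eta$, but \emph{not} on the particular weighted family producing the array. Since \eqref{eq:cov-gibbs-z}--\eqref{eq:cov-gibbs-y} and \eqref{eq:cov-RPC-z}--\eqref{eq:cov-RPC-y} coincide as functions of their arrays (after replacing $\rho_N^s$ by $\rho^s$, which is absorbed in the $o(1)$ term already recorded between \eqref{eq:LBstep1} and \eqref{eq:LBstep2}), the same $g^Z_\eta, g^Y_\eta$ work on both sides, and the argument closes.
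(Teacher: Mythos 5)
Your proposal is correct and follows essentially the same route as the paper: both arguments observe that the covariances \eqref{eq:cov-gibbs-z}--\eqref{eq:cov-gibbs-y} and \eqref{eq:cov-RPC-z}--\eqref{eq:cov-RPC-y} are the same functions of the overlap arrays, apply \prettyref{thm:approximation} to replace all four free energies by common bounded continuous functionals $\E g^Z_\epsilon$, $\E g^Y_\epsilon$ of the arrays, and then use the weak convergences $R^N \to R^\infty$ (along the chosen subsequence) and $Q^n \to R^\infty$ from \eqref{eq:convergence-Q} together with the triangle inequality. Your explicit remark that the test functions produced by \prettyref{thm:approximation} depend only on $M$, $\Sigma_M(\delta)$, $C^s_Z$, $C_Y$ and $\eta$, and not on the weighted family generating the array, is exactly the point the paper relies on implicitly.
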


\begin{proof} 
The covariance structure of \eqref{eq:cov-RPC-z} and \eqref{eq:cov-RPC-y} are identical to \eqref{eq:cov-gibbs-z} and \eqref{eq:cov-gibbs-y}. By \prettyref{thm:approximation} the functionals appearing in \eqref{eq:temp4} can be approximated by the same bounded continuous functions $\E g_\epsilon^Z( R)$ and $\E g_\epsilon^{Y}( R)$ of the overlap arrays. Since the distribution of the arrays $ R^N$ and $ Q^n$ both converge weakly to $ R^\infty$, by first taking a subsequence $N$ along which $ R^N$ converges to $ R^\infty$ and then approximating with $ Q^n$ we have for any $\tilde \epsilon > 0$, 
\[
|\E g_\epsilon^Z( R^\infty) - \E g_\epsilon^{Y}( R^\infty) - \E g_\epsilon^Z( Q^n) + \E g_\epsilon^{Y}( Q^n)| \leq \tilde \epsilon.
\]
by choosing $n$ sufficiently large. Applying the triangle inequality will complete the proof. 
\end{proof}
Since $\Sigma_M(\delta_M) = \prod_{s \in \sS} \Sigma_{M_s} (\delta_{M_s})$, 
we express the first term in \eqref{eq:temp4} as a weighted average. By the properties of the Ruelle probability cascades (see  \cite[Theorem 2.9]{P3}), 
\begin{align}
\frac{1}{M} \E \log \sum_{\alpha \in \N^r} v_\alpha 
\sum_{\epsilon \in \Sigma_M(\delta)} \exp \sum_{i \leq M} Z_{i,n}^\alpha (\epsilon_i) &= \frac{1}{M}  \E \log \sum_{\alpha \in \N^r} v_\alpha 
\prod_{s \in \sS} \sum_{\epsilon \in \Sigma_{M_s}(\delta_{M_s})} \exp \sum_{i \in M_s} Z_{i,n}^\alpha (\epsilon_i) \nonumber\\
&= \sum_{s \in \sS} \rho^s_M  \frac{1}{M_s} \E \log \sum_{\alpha \in \N^r} v_\alpha 
\sum_{\epsilon \in \Sigma_{M_s}(\delta_{M_s})} \exp \sum_{i \in M_s} Z_{i,n}^\alpha (\epsilon_i). \nonumber
\end{align}
Every sequence $(x^n_p)_{p=0}^r$ and $(Q^{n,s}_p)_{p=0}^r$ defines a discrete path $\bigl(\pi_n^s(x) \bigr)_{s \in \sS} \in \Pi$. Using the notation of the functionals $f^Z_{M_s}(\Sigma_{M_s}(\delta_{M_s}), s; \bm \pi_n)$ and $f^Y_M(\bm \pi_n)$ defined on \eqref{eq:func-pi-z} and \eqref{eq:func-pi-y}, we have shown that
\[
\liminf_{N \to \infty}\frac{1}{N} \E \log Z_N(d_N) \geq \Big( \sum_{s \in \sS} \rho_M^s f^Z_{M_s}(\Sigma_{M_s}(\delta_{M_s}), s; \bm \pi_n) - f^Y_M (\bm \pi_n) \Big) - \epsilon.
\]
\prettyref{lem:ctyofz} and \prettyref{lem:ctyofy} imply the functionals $f^Z_{M_s}(\Sigma_{M_s}(\delta_{M_s}), s;\bm \pi_n)$ and $f^Y_M (\bm \pi_n)$ are Lipschitz. Sending $\epsilon \to 0$ and noticing that the paths $\bm \pi_n \to \bm \pi_\infty$ in $\Delta$, we have shown
\begin{equation}\label{eq:temp2}
\liminf_{N \to \infty}\frac{1}{N} \E \log Z_N(d_N) \geq \Big( \sum_{s \in \sS} \rho_M^s f^Z_{M_s}(\Sigma_{M_s}(\delta_{M_s}),s;\bm \pi_\infty) - f^Y_M (\bm \pi_\infty) \Big) , \text{ for any $M > 0$}.
\end{equation}

All that remains is to remove the dependence on $\Sigma_M(\delta_M)$. This will be a direct application of the decoupling proved in \prettyref{thm:discretize}.

\begin{lem} There exists a path $\bm \pi^* \in \Pi$ such that
	\[
	\lim_{M \to \infty} \Big( \sum_{s \in \sS} \rho_M^s f^Z_{M_s}(s,\bm \pi^*) - f^Y_M (\bm \pi^*) \Big) 
	\geq \sP(r, x, d, (\lambda^s, Q^s)_{s \in \sS}) - \epsilon.
	\]
\end{lem}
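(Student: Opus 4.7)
The plan is to build $\bm\pi^*$ as a piecewise-constant discretization of the limiting path $\bm\pi_\infty$ appearing in \eqref{eq:temp2}, then apply the Decoupling Theorem of \prettyref{sec:continuity} species by species to convert the constrained $Z$-functional into its Lagrangian form, which matches the $\sum_s\rho^s(-\lambda^s\cdot d^s+X_0^s)$ part of $\sP$. First I would invoke \prettyref{thm:discretize} to produce $\bm\pi^*$ with breakpoints $0=x_{-1}\le x_0\le\ldots\le x_r=1$ and values $(Q^s_p)_{p=0}^r$ such that $\Delta(\bm\pi_\infty,\bm\pi^*)<\epsilon$; these $(r,x,(Q^s))$ are exactly the Parisi data that will appear on the right-hand side. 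By \prettyref{lem:ctyofz} and \prettyref{lem:ctyofy} (whose Lipschitz constants are uniform in $M$), the quantity in the lemma differs from its $\bm\pi_\infty$ analog by at most $L\epsilon$, so it suffices to produce the bound for $\bm\pi^*$ (which is what the two-argument shorthand $f^Z_{M_s}(s,\bm\pi^*)$ implicitly denotes, with the constraint $\Sigma_{M_s}(\delta_{M_s})$ inherited from \eqref{eq:temp2}).

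Next, I would let $M\to\infty$. For $f^Y_M(\bm\pi^*)$, the explicit Ruelle-probability-cascade computation recalled in \eqref{eq:upbdyterm} yields
\[
\lim_{M\to\infty} f^Y_M(\bm\pi^*) = \frac{1}{2}\sum_{\ell=0}^{r-1}x_\ell\sum_{s,t\in\sS}\Delta_{s,t}^2\rho^s\rho^t\bigl((Q^s_{\ell+1},Q^t_{\ell+1})-(Q^s_\ell,Q^t_\ell)\bigr),
\]
which is precisely the negative of the third term of $\sP(r,x,d,(\lambda^s,Q^s))$, for any choice of $(\lambda^s)$. For the $Z$-term, the constraints $\delta^s_{M_s}$ satisfy $\delta^s_{M_s}\to d^s$ by \eqref{eq:convergence-proportions}, so the Decoupling Theorem applies to each species separately and yields
\[
\lim_{M\to\infty} f^Z_{M_s}(\Sigma_{M_s}(\delta_{M_s}),s;\bm\pi^*) = \inf_{\lambda^s\in\R^\kappa}\Bigl(-\sum_{k\leq\kappa}\lambda^s_k d^s_k + X_0^s(\lambda^s;\bm\pi^*)\Bigr),
\]
where $X_0^s(\lambda^s;\bm\pi^*)$ is the recursive quantity in \eqref{eq:recursion} built from the discrete Parisi data encoded in $\bm\pi^*$ and the multiplier $\lambda^s$.

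To finish, for each species I would pick a near-minimizer $\lambda^{s,\star}$ satisfying
\[
-\lambda^{s,\star}\cdot d^s + X_0^s(\lambda^{s,\star};\bm\pi^*) \le \inf_{\lambda^s}\bigl(-\lambda^s\cdot d^s+X_0^s(\lambda^s;\bm\pi^*)\bigr) + \frac{\epsilon}{|\sS|}.
\]
Multiplying by $\rho^s$, summing, and subtracting the $Y$-limit, the left-hand side of the lemma equals $\sum_s\rho^s\inf_{\lambda^s}(-\lambda^s\cdot d^s+X_0^s(\lambda^s;\bm\pi^*)) - \lim_M f^Y_M(\bm\pi^*)$, which by the previous display is at least $\sP(r,x,d,(\lambda^{s,\star},Q^s)_{s\in\sS})-\epsilon$, as claimed.

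The main subtlety I expect is coordinating the three sources of error — the discretization distance $\Delta(\bm\pi_\infty,\bm\pi^*)<\epsilon$, the $\epsilon/|\sS|$ near-minimization slack in each $\lambda^{s,\star}$, and the $O(1/M_s)$ rate of $\delta^s_{M_s}\to d^s$ — so that they all collapse into a single $\epsilon$. This is possible because the Lipschitz estimates of \prettyref{lem:ctyofz} and \prettyref{lem:ctyofy} are uniform in $M$, and because the limiting species functional $d\mapsto f^s(d)$ inherits the H\"older-$1/2$ modulus of continuity established in \prettyref{lem:decouple1}.
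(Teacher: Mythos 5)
Your overall strategy (discretize the path, transfer via the uniform-in-$M$ Lipschitz bounds of \prettyref{lem:ctyofz} and \prettyref{lem:ctyofy}, compute the $Y$-term as in \eqref{eq:upbdyterm}, decouple the $Z$-term species by species using $\delta^s_{M}\to d^s$, and close with near-minimizing multipliers $\lambda^{s,\star}$) matches the paper's argument, and your $\lambda$-bookkeeping is if anything more careful than the paper's. However, there is a genuine gap at the very first step: you treat the path $\bm\pi_\infty$ in \eqref{eq:temp2} as a single object and discretize it once, but this path depends on $M$. The entire cavity construction leading to \eqref{eq:temp2} — the split into $M$ cavity and $N$ bulk coordinates, the constraint $\delta_M$, the subsequence of $N$, the Gibbs measure $G'_N$, and hence the subsequential limit array $R^\infty$ and the resulting path — is carried out for each fixed $M$, so what \eqref{eq:temp2} really provides is a family $(\bm\pi^M_\infty)_{M\geq 1}$. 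If you discretize each of these you get Parisi data $(r,x,(Q^s))$ that drift with $M$, and the limit $\lim_{M\to\infty}$ in the lemma, taken at a single fixed $\bm\pi^*$ with a single fixed right-hand side $\sP(r,x,d,(\lambda^s,Q^s)_{s\in\sS})$, is not justified.

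The paper's proof opens precisely by repairing this: since the $\bm\pi^M_\infty$ form a family of bounded monotone matrix-valued paths, one can extract a subsequence in $M$ along which $\bm\pi^M_\infty\to\bm\pi^*$ in the metric $\Delta$, and the Lipschitz estimates (uniform in $M$) then allow replacing $\bm\pi^M_\infty$ by the single path $\bm\pi^*$ in \eqref{eq:temp2} with an error that vanishes along the subsequence; only afterwards is $\bm\pi^*$ discretized (by a path $\bm\pi_\epsilon$ with $\Delta(\bm\pi_\epsilon,\bm\pi^*)<C\epsilon$) so that the decoupling theorem of \prettyref{thm:decouple} applies and produces the fixed data $(r,x,(Q^s))$ in the statement. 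Your remaining steps would then go through essentially as you wrote them, so the fix is to add this compactness/subsequence extraction in $M$ before any discretization; without it the argument does not produce a path $\bm\pi^*$ valid uniformly in $M$, which is exactly what the lemma asserts.
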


\begin{proof}
	We now make the paths $\bm \pi^M_\infty$ dependence on $M$ explicit. Since $(\bm \pi^M_\infty)_{M \geq 1}$ is a countable collection of bounded monotone paths of $\kappa \times \kappa$ matrices, there exists a subsequence in $M$ such that $\bm \pi^M_\infty \to \bm \pi^*$. Given $\epsilon > 0$, by \prettyref{thm:discretize} we can find a discrete path $\bm \pi_\epsilon$ such that $\Delta(\bm \pi_\epsilon, \bm \pi^*) < C\epsilon$, where $C$ is the maximum Lipshitz constant over all $f^Z_{M_s}(s)$. Since $\bm \pi^\epsilon$ is discrete, applying \prettyref{thm:decouple} to the summation appearing in \eqref{eq:temp2} shows
	\[
	\lim_{M \to \infty}  \sum_{s \in \sS} \rho_M^s f^Z_{M_s}(s, \bm \pi^*) \geq \lim_{M \to \infty} \sum_{s \in \sS} \rho_M^s f^Z_M(\bm \pi_{\epsilon}) - C\epsilon \geq \inf_{\lambda} \sum_{s \in \sS} \rho^s \bigg( -\sum_{k \leq \kappa}\lambda^s_k d_k^s +  X_0^s(\bm \pi^\epsilon) \bigg) - C\epsilon
	\]
	since $\rho^s_M \to \rho^s$, $\delta^s_{M,k} \to d_k^s$ and $f_M^z$ is Lipschitz. 
	
	The term $f^Y_M(\bm \pi_\infty)$ is actually independent of $M$, and a similar computation using the properties of Ruelle Probability Cascades like in \eqref{eq:upbdyterm} shows
	\[
	\lim_{M \to \infty}   f^Y_M(\bm \pi^*) = \frac{1}{2} \sum_{\ell = 0}^{r-1} x_{\ell} \sum_{s,t \in \sS} \Delta^2_{st} \rho^s \rho^t \left( (Q^s_{\ell+1},Q^t_{\ell+1}) - (Q^s_{\ell},Q^t_{\ell}) \right).
	\]
	
	Combining the computations of the two terms above, we now take $\epsilon \to 0$ and get
	\begin{equation}\label{eq:lowerbd-end}
	\liminf_{N \to \infty} \frac{1}{N}\E \log Z_N(d_N) \geq \sP\bigl(d, \lambda, \bm \pi^*\bigr)
	\end{equation}
	since $\pi_\epsilon \to \pi^*$ as $\epsilon \to 0$. 
\end{proof}
Since the path $\bm \pi^*$ can be described as the limit of the discrete approximating sequences \eqref{eq:RPC6} and \eqref{eq:RPC5}, we have shown
\[
\liminf_{N \to \infty} \frac{1}{N}\E \log Z_N(d_N) \geq \inf_{x, r,(\lambda^s, Q^s)_{s \in \sS}} \sP\bigl(r, x, d, (\lambda^s, Q^s)_{s \in \sS}\bigr).
\]

\section{Proofs of Main Theorem}

\begin{proof}[Proof of \prettyref{thm:ParisiUnConstrained}] 
	We begin by proving part (1) of the Theorem.
			
	We start with the lower bound of $F_N \bigl(\Sigma_N^{\epsilon_N}(d) \bigr)$. For any $d \in \sD$ let us choose $\epsilon_N = \frac{L_\kappa}{N}$. We choose a sequence of $d_N \in \sD_N$ such that $\|d_N - d\|_\infty = \epsilon_N$ to satisfy the condition in \eqref{eq:convergence-proportions}. Since $\Sigma^\epsilon_N(d) \supseteq \Sigma_N(d_N)$, the lower bound \eqref{eq:lowerbound} computed in \prettyref{sec:Aizenman-LB} implies
	\begin{equation}\label{eq:mainthm-cons-lb}
	\liminf_{N \to \infty} \frac{1}{N} \E \log Z_N^{\epsilon_N}(d) \geq \liminf_{N \to \infty} \frac{1}{N} \E \log Z_N(d_N) \geq \inf_{x, r,(\lambda^s, Q^s)_{s \in \sS}}  \sP\bigl(r, x, d, (\lambda^s, Q^s)_{s \in \sS} \bigr).
	\end{equation}
	We now obtain the matching upper bound of $F_N \bigl(\Sigma_N^{\epsilon_N}(d) \bigr)$. Since $\epsilon_N \to 0$, the $O(\epsilon)$ term in the upper bound \eqref{eq:guerra-ub-thick} computed in \prettyref{sec:Guerra-UB} vanishes yielding
	\begin{equation}\label{eq:mainthm-cons-ub}
	\limsup_{N \to \infty} \frac{1}{N} \E \log Z_N^{\epsilon_N}(d) \leq \inf_{x, r,(\lambda^s, Q^s)_{s \in \sS}} \sP\bigl(r, x, d, (\lambda^s, Q^s)_{s \in \sS} \bigr).
	\end{equation}
	Combining the inequalities in \eqref{eq:mainthm-cons-lb} and \eqref{eq:mainthm-cons-ub}, we arrive at the formula for the constrained free energy
	\begin{equation}\label{eq:mainthm-cons-functional}
	\lim_{N \to \infty} \frac{1}{N} \E \log Z_N(d) = \inf_{x, r,(\lambda^s, Q^s)_{s \in \sS}} \sP\bigl(r, x, d, (\lambda^s, Q^s)_{s \in \sS} \bigr).
	\end{equation}
	
	Part (2) of the Theorem is a direct consequence of part (1). By classical Gaussian concentration inequalities, observe that the limit of the free energy $F_N(\Sigma_N)$ is asymptotically given by the supremum of $F_N\bigl(\Sigma_N(d)\bigr)$ over $d \in \sD$. By taking the supremum over $d \in \sD$ in \eqref{eq:mainthm-cons-functional},  we arrive at the formula for the free energy
	\begin{equation}
	\lim_{N \to \infty} \frac{1}{N} \E \log Z_N = \sup_{d \in \sD} ~\inf_{x, r,(\lambda^s, Q^s)_{s \in \sS}} \sP\bigl(r, x, d, (\lambda^s, Q^s)_{s \in \sS} \bigr).
	\end{equation}
\end{proof}

\begin{proof}[Proof of Corollary \ref{cor:ground_state}] 
We include a proof of the Corollary for the sake of completeness. By H\"{o}lder's inequality, 
$\frac{F_N^{\beta}(\Sigma_N^{\ve_N}(d))}{\beta}$ is increasing in $\beta$ and therefore 
\begin{align}
\lim_{\beta \to \infty} \frac{1}{\beta}\inf_{x, r,(\lambda^s, Q^s)_{s \in \sS}} \sP_\beta\bigl(r, x, d,(\lambda^s, Q^s)_{s \in \sS} \bigr)
\end{align}
exists. Moreover, we have, 
\begin{align}
\frac{1}{N} \E \Big[ \max_{\Sigma_{N}^{\ve_N}(d)} H_N(\sigma) \Big] \leq \frac{F_N^{\beta}(\Sigma_N^{\ve_N}(d))}{\beta} \leq \frac{\log q}{\beta} + \frac{1}{N} \E \Big[ \max_{\Sigma_{N}^{\ve_N}(d)} H_N(\sigma) \Big]. \nonumber 
\end{align}
Taking the limit as $N\to \infty$ and then as $\beta \to \infty$ completes the proof. 
\end{proof}
\section{Proofs regarding Cuts Problems}
\label{sec:cut_proofs}
In this section, we prove Theorem \ref{thm:discretization}. We start with a proof of Lemma \ref{lemma:discretization}. 
\begin{proof}[Proof of Lemma \ref{lemma:discretization}]
The proof of this lemma proceeds by a direct coupling argument. We will denote the adjacency matrix corresponding to $\tilde{G}_N$ as $\tilde{A} = ( \tilde{A}_{i,j})$. We will realize the two graphs on the same probability space and couple the adjacency matrices such that 
\begin{align}
\P[ A_{i,j} \neq \tilde{A}_{i,j}] &= | \P[ A_{i,j} =1] - \P[\tilde{A}_{i,j} =1 ] | \nonumber \\
&\leq c N \Big| \int_{[\frac{i-1}{N}, \frac{i}{N}] \times [\frac{j-1}{N},\frac{j}{N}]} K(x,y) \, \de x \de y -  \int_{[\frac{i-1}{N}, \frac{i}{N}] \times [\frac{j-1}{N},\frac{j}{N}]} K_1(x,y) \, \de x \de y \Big|. \nonumber 
\end{align}
We note that for any two graphs $G_N$ and $\tilde{G}_N$, we have, 
\begin{align}
\Big| \frac{\mqcut(G_N)}{N}- \frac{\mqcut(\tilde{G}_N)}{N}  \Big| \leq \frac{1}{2N} \sum_{i,j = 1}^N |A_{i,j} - \tilde{A}_{i,j}|. \nonumber 
\end{align}
This implies that 
\begin{align}
&\Big| \E \Big[ \frac{\mqcut(G_N)}{N} \Big]- \E\Big[\frac{\mqcut(\tilde{G}_N)}{N} \Big] \Big| \nonumber \\ 
 &\leq \frac{c}{2} \sum_{i,j = 1}^N \Big| \int_{[\frac{i-1}{N}, \frac{i}{N}] \times [\frac{j-1}{N},\frac{j}{N}]} K(x,y) \, \de x \de y -  \int_{[\frac{i-1}{N}, \frac{i}{N}] \times [\frac{j-1}{N},\frac{j}{N}]} K_1(x,y) \, \de x \de y \Big| \nonumber \\
&= \frac{c}{2} \bigl( \| K - K_1 \|_1 + o(1) \bigr). \nonumber 
\end{align}
Thus the proof is complete once we choose $M$ sufficiently large such that $\|K - K_1 \|_1 \leq 1/ c^{1/2 + \delta}$. 

\end{proof}

To complete the proof of Theorem \ref{thm:discretization}, we first introduce the following Gaussian optimization problem. Set $J = (J_{i,j})$ a symmetric matrix such that $\{J_{i,j}: i \leq j \}$ are independent $N(0, \frac{\tilde{K}_N(i,j)}{N})$ random variables. We define 
\begin{align}
\tilde{Z}_N = \frac{1}{2N} \max_{\sigma \in [\kappa]^N} \Big[ \frac{c}{N} \sum_{i,j = 1}^N \tilde{K}_N(i,j) \bone(\sigma_i \neq \sigma_j) + \sqrt{c}\sum_{i,j = 1}^N \frac{J_{i,j}}{\sqrt{N}} \bone(\sigma_i \neq \sigma_j) \Big].  
\end{align}
The following lemma establishes that in the ``large degree" limit, we can study the asymptotic behavior of the $\mqcut$ problem via that of the Gaussian optimization problem $\tilde{Z}_N$. 

\begin{lem}
\label{lemma:comp}
As $N \to \infty$, we have, 
\begin{align}
\E\Big[ \frac{\mqcut(G_N)}{N} \Big] = \E[\tilde{Z}_N] + o(\sqrt{c}). \nonumber 
\end{align}
\end{lem}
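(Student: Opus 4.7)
The plan is to compare the Bernoulli-based objective $\mqcut(G_N)/N$ with the Gaussian objective $\tilde Z_N$ by a Lindeberg-type universality argument, in the spirit of \cite{dembo2016extremal} and \cite{sen2016optimization}. The key observation is that the summands $A_{i,j}$ and $\tfrac{c\tilde K_N(i,j)}{N} + \tfrac{\sqrt c}{\sqrt N}J_{i,j}$ have essentially matching first two moments: the means agree up to the harmless $\min\{\cdot,1\}$ truncation, and the variances $p_{i,j}(1-p_{i,j})$ and the variance of the corresponding Gaussian summand agree up to a multiplicative factor $1-p_{i,j} = 1-O(c/N)$. Since the first two moments agree, the two extrema should coincide to leading order.

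Concretely, I proceed in three steps. First, since $\max$ is not smooth, I replace both sides by the soft-max free energy
\[
F_\beta(X) := \frac{1}{2N\beta}\log\sum_{\sigma\in[\kappa]^N}\exp\Bigl(\beta\sum_{i,j}X_{i,j}\1(\sigma_i\neq\sigma_j)\Bigr),
\]
at inverse temperature $\beta$, which satisfies $0\le F_\beta(X)-\tfrac{1}{2N}\max_\sigma\sum X_{i,j}\1(\sigma_i\neq\sigma_j) \le \log\kappa/(2\beta)$, so the regularization error is $o(\sqrt c)$ provided $\beta\gg c^{-1/2}$. Second, I apply a one-edge-at-a-time Lindeberg replacement, swapping each $A_{i,j}$ for an independent Gaussian $\tilde G_{i,j}$ matching its first two moments. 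Because $\1(\sigma_i\neq\sigma_j)\in\{0,1\}$, the first three partial derivatives of $F_\beta$ in any single variable $X_{i,j}$ are bounded by $1/(2N)$, $\beta/(2N)^2$, and $\beta^2/(2N)^3$ respectively; combined with the third-absolute-moment estimate $\E|A_{i,j}|^3+\E|\tilde G_{i,j}|^3 = O(c/N)$, a third-order Taylor expansion yields a per-edge replacement error of $O(\beta^2 c/N^4)$, hence a total error of $O(\beta^2 c/N^2)$, which is $o(\sqrt c)$ provided $\beta\ll Nc^{-1/4}$. Third, after the replacement I compare $F_\beta(\tilde G)$ with the Gaussian expression built from $\tfrac{c}{N}\tilde K_N(i,j) + \tfrac{\sqrt c}{\sqrt N}J_{i,j}$; the means coincide once $c\tilde K_N/N\le 1$ everywhere (which holds for $N$ large, since $\tilde K_N$ is bounded in the block-kernel setting), and the variances differ by $O((c/N)^2)$ per edge, a discrepancy controlled by a further, and now simpler, Gaussian smart-path interpolation contributing another $o(\sqrt c)$ error.

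The main technical obstacle is balancing the three error sources so that the window $c^{-1/2}\ll\beta\ll Nc^{-1/4}$ is non-empty while all errors are $o(\sqrt c)$; this is non-empty exactly in the large-degree regime $Nc^{1/4}\gg 1$, which is our setting since $N$ is sent to infinity before any statement in $c$ is made. The derivative bounds on $F_\beta$ are standard and ultimately follow from the Gibbs variance of the bounded observable $\1(\sigma_i\neq\sigma_j)$ being at most $1/4$. Combining the three estimates and sending $\beta\to\infty$ within the admissible window gives $\E[\mqcut(G_N)/N]-\E[\tilde Z_N]=o(\sqrt c)$, completing the proof of Lemma \ref{lemma:comp}.
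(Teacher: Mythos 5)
The paper's own proof of this lemma is a one-line appeal to \cite[Theorem 1.1]{sen2016optimization}, whose proof is precisely the positive-temperature smoothing plus moment-matching comparison you are reconstructing, so your architecture (soft-max at inverse temperature $\beta$, an edge-by-edge Lindeberg swap, then a Gaussian smart path to absorb the $O((c/N)^2)$ variance mismatch) is the right one. However, your quantitative bounds on the derivatives of $F_\beta$ are wrong by factors of $N$, and this is not cosmetic. Writing $s=\1(\sigma_i\neq\sigma_j)$, one has $\partial_{X_{ij}}F_\beta=\frac{1}{2N}\langle s\rangle$, $\partial^2_{X_{ij}}F_\beta=\frac{\beta}{2N}\Var_{\mathrm{Gibbs}}(s)$, and $\partial^3_{X_{ij}}F_\beta=\frac{\beta^2}{2N}\cdot(\text{third Gibbs cumulant of }s)$; since the Gibbs cumulants of a single bounded observable are $O(1)$ (not $O(1/N)$ or $O(1/N^2)$), the correct bounds are $O(1/N)$, $O(\beta/N)$, $O(\beta^2/N)$, not $1/(2N)$, $\beta/(2N)^2$, $\beta^2/(2N)^3$. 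Consequently the per-edge replacement error is $O(\beta^2 c/N^2)$ and the total over $\sim N^2$ edges is $O(\beta^2 c)$, not $O(\beta^2 c/N^2)$. Your window $c^{-1/2}\ll\beta\ll N c^{-1/4}$ and the phrase ``sending $\beta\to\infty$'' are artifacts of the inflated bounds: if they were correct, the same argument would give $\E[\mqcut(G_N)/N]=\E[\tilde Z_N]+o(1)$ at \emph{fixed} $c$, i.e.\ exact universality at bounded average degree, which is false --- the sparse and Gaussian problems genuinely differ at fixed $c$, and the $o(\sqrt c)$, $c\to\infty$ form of the error is essential.

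The good news is that the corrected estimates still close the argument: take $\beta=\beta(c)$ depending on $c$ only, with $c^{-1/2}\ll\beta\ll c^{-1/4}$ (e.g.\ $\beta=c^{-3/8}$), so that after $N\to\infty$ the smoothing error $O(1/\beta)$, the Lindeberg error $O(\beta^2 c)$, and the variance-fixing interpolation error $O(\beta c^2/N)$ are all $o(\sqrt c)$, which is exactly the claim of the lemma. Two smaller points: the mean matching indeed requires $c\tilde K_N/N\le 1$, which holds for large $N$ in the bounded block-kernel setting of Theorem \ref{thm:discretization}; and for the variances to match to the order you assert, the Gaussian edge weight $\sqrt{c}\,J_{i,j}/\sqrt{N}$ must have variance $c\tilde K_N(i,j)/N\approx p_{i,j}$, i.e.\ you are implicitly reading $\Var(J_{i,j})=\tilde K_N(i,j)$, which is the normalization consistent with \eqref{eq:ham-def} and with the cited theorem. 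So: correct strategy, essentially a self-contained re-derivation of the result the paper simply cites, but the stated derivative bounds and the resulting admissible range of $\beta$ must be repaired as above.
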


\begin{proof}
The lemma follows from a direct application of \cite[Theorem 1.1]{sen2016optimization}. Specifically, we note that 
\begin{align}
\frac{\mqcut(G_N)}{N} = \frac{1}{2N} \max_{\sigma \in [\kappa]^N} \sum_{i,j = 1}^N A_{i,j} \bone(\sigma_i \neq \sigma_j). \nonumber 
\end{align}
The proof follows directly upon an application of the result in \cite{sen2016optimization}. 
\end{proof}

Thus finally, it comes down to the study of $\E[Y_n]$ as $N\to \infty$. Fix a probability distribution on $[\kappa]$, $d^{s} = (d^{s}_1, \ldots, d^{s}_\kappa)$, $s= 1, \ldots, M$. Consider the Hamiltonian 
\begin{align}
H(\sigma) = \sum_{i,j =1}^N \frac{J_{i,j}}{\sqrt{N}} \bone(\sigma_i = \sigma_j),  
\end{align}
where $J = (J_{i,j})$ is as described above. 
For $d = \bigl(d^{1}, \ldots, d^{M} \bigr)$, probability measures on $[\kappa]$, and a sequence $\ve_N$ decaying to zero sufficiently slowly, recall the restricted configuration space \eqref{eq:constrainedspace}
\begin{align}
\Sigma_N^{\ve_N}(d) = \Big\{ \sigma \in [\kappa]^N \mathrel{}\Big|\mathrel{} \sum_{i \in I_s} \frac{\bone(\sigma_i = k )}{N_s} \in [ d^{s}_k -\ve_N, d^{s}_k + \ve_N], s \in [M] \Big\}.  
\end{align}

Recall the restricted ground state energy $\mathcal{P}(d)$, introduced in Corollary \ref{cor:ground_state}. 
 This will allow us to deduce the following lemma.

\begin{lem}
\label{lemma:exp}
\begin{align}
\lim_{N \to \infty} \E[\tilde{Z}_N] = \sup_{d} \Big[ \frac{c}{2} \sum_{s,t = 1}^M \mathbf{K}(s, t) \rho^{s} \rho^{t} (1- \langle d^{s}, d^{t} \rangle) + \frac{\sqrt{c}}{2} \mathcal{P}(d) \Big] + o(\sqrt{c}). \nonumber
\end{align}
\end{lem}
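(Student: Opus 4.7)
The plan is to decompose the outer maximum defining $\tilde Z_N$ according to the proportions $d\in\sD_N$ of spins within each block. Since $K$ is block-constant, for every $\sigma\in\Sigma_N(d)$ the deterministic part of the objective depends on $\sigma$ only through $d$ and evaluates explicitly to $\tfrac{c}{2}\sum_{s,t}\mathbf{K}(s,t)\rho_N^s\rho_N^t(1-\langle d^s,d^t\rangle)$ after dividing by $2N$. Thus the problem reduces to controlling the expectation of the inner Gaussian maximum
\[
G_N(d)\;:=\;\frac{\sqrt c}{2N}\max_{\sigma\in\Sigma_N(d)}\sum_{i,j}\frac{J_{ij}}{\sqrt N}\bone(\sigma_i\neq\sigma_j)
\]
as a function of $d\in\sD_N$, and then taking the outer supremum.

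For the inner Gaussian term with $d$ fixed, I would write $\bone(\sigma_i\neq\sigma_j)=1-\bone(\sigma_i=\sigma_j)$, splitting the sum into a centered $\sigma$-independent piece and $-\sum\frac{J_{ij}}{\sqrt N}\bone(\sigma_i=\sigma_j)$. Identifying $\sqrt N\,J_{ij}$ with the Potts couplings $g_{ij}$ of variance $\mathbf{K}(s,t)=\Delta^2_{s,t}$, the latter sum is, up to the overall prefactor, precisely the Hamiltonian $H_N(\sigma)$ from \eqref{eq:ham-def}. The symmetry $J\eqdist -J$ gives $\E\max_{\Sigma_N(d)}(-H_N)=\E\max_{\Sigma_N(d)}H_N$, so $\E G_N(d)$ equals an explicit multiple of $\tfrac{1}{N}\E\max_{\Sigma_N(d)}H_N(\sigma)$. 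Choosing $d_N\in\sD_N$ with $\|d_N-d\|_\infty=O(1/N)$ yields $\Sigma_N(d_N)\subseteq\Sigma_N^{\ve_N}(d)$ for a suitable $\ve_N\to 0$, and \prettyref{cor:ground_state} then gives $\E G_N(d_N)\to\tfrac{\sqrt c}{2}\mathcal{P}(d)$ for every $d\in\sD$.

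To transfer this pointwise convergence through the supremum over $d$, I would argue by matching upper and lower bounds. The lower bound follows by picking a near-optimal $d^*\in\sD$, approximating by $d^*_N\in\sD_N$, and using the pointwise convergence above. For the upper bound, $\tilde Z_N$ is Lipschitz in $(J_{ij})$ with Lipschitz constant $O(\sqrt c/N)$ in the $\ell^2$ norm weighted by variances, so Borell-TIS shows $\tilde Z_N$ concentrates around $\E\tilde Z_N$ with fluctuations $o(1)$; combined with $\E\max_{d\in\sD_N}\bigl(G_N(d)-\E G_N(d)\bigr)=O\bigl(\sqrt{\log|\sD_N|/N}\bigr)=o(1)$ from the subgaussian maximal inequality applied to the polynomial-sized grid $\sD_N$, this yields
\[
\E\tilde Z_N\;\le\;\max_{d\in\sD_N}\bigl[A_N(d)+\E G_N(d)\bigr]+o(1),
\]
where $A_N(d)$ denotes the deterministic block-sum. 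Taking $N\to\infty$ matches the lower bound.

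The main obstacle I anticipate is the continuity of $d\mapsto\mathcal{P}(d)$, needed both to identify $\sup_{\sD_N}$ with $\sup_\sD$ and to close the matching upper bound. This can be handled by a Gaussian interpolation argument of the type used in Section \ref{sec:continuity} (cf.\ \prettyref{lem:ctyofz}): comparing Hamiltonians associated with different constraint profiles $d,d'$ along a smart path and applying Gaussian integration by parts yields an $O(\|d-d'\|_\infty^{1/2})$ modulus of continuity for $\mathcal{P}$, from which the $O(1/N)$-density of $\sD_N$ in $\sD$ closes the argument. A minor bookkeeping nuisance is the mismatch between the strict constraint $\Sigma_N(d)$ in the decomposition and the relaxed constraint $\Sigma_N^{\ve_N}(d)$ appearing in \prettyref{cor:ground_state}, but this is handled by the same proportion-matching argument used throughout Section \ref{sec:perturbation} and Section \ref{sec:Aizenman-LB}.
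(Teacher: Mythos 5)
Your proposal is correct and follows essentially the same route as the paper: decompose $\tilde Z_N$ according to the block proportions $d$, evaluate the deterministic term exactly, identify the constrained Gaussian maximum with the Potts ground state via Corollary \ref{cor:ground_state} for the lower bound, and obtain the matching upper bound from Gaussian concentration combined with the polynomially many feasible values of $d$. The only differences are cosmetic: the paper integrates a union-bounded tail where you invoke a subgaussian maximal inequality, and you spell out the uniformity in $d$ (via continuity of $\mathcal{P}(d)$) that the paper asserts implicitly.
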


\begin{proof}
We start with the lower bound.
We define 
\begin{align}
\tilde{Z}_N(d) &= \frac{1}{2N} \max_{\sigma \in \Sigma_N^{\ve_N}(d)} \Big[ \frac{c}{N} \sum_{i,j = 1}^N \tilde{K}_N(i,j) \bone(\sigma_i \neq \sigma_j) + \sqrt{c}\sum_{i,j = 1}^N \frac{J_{i,j}}{\sqrt{N}} \bone(\sigma_i \neq \sigma_j)\Big] \nonumber\\
&= \frac{c}{2} \sum_{s,t = 1}^M \mathbf{K}(s,t) \rho^{s} \rho^{t} \bigl(1 - \langle d^{s}, d^{t} \rangle \bigr) + \frac{\sqrt{c}}{2N}\max_{\sigma \in \Sigma_N^{\ve_N}(d)}  \sum_{i,j = 1}^N \frac{J_{i,j}}{\sqrt{N}} \bone(\sigma_i \neq \sigma_j) + o(1). \nonumber 
\end{align}

For fixed probability vectors $d^{1}, \ldots, d^{M}$, we have, 
\begin{align}
\liminf_{N \to \infty} \E[\tilde{Z}_N] &\geq \liminf_{N \to \infty} \E[\tilde{Z}_N(d)]  \nonumber \\
&= \frac{c}{2} \sum_{s,t = 1}^M \mathbf{K}(s,t) \rho^s \rho^t \bigl(1 - \langle d^s, d^t \rangle \bigr) + \frac{\sqrt{c}}{2} \mathcal{P}(d). \nonumber 
\end{align}

We take the supremum over all possible probability vectors $d^{1}, \ldots , d^{M}$ to get the requisite lower bound. To establish the upper bound, we define 
\begin{align}
\tilde{M} =  \sup_{d} \Big[ \frac{c}{2} \sum_{s,t = 1}^M \mathbf{K}(s, t) \rho^s \rho^t \bigl(1- \langle d^{s}, d^{t} \rangle \bigr)+ \frac{\sqrt{c}}{2} \mathcal{P}(d) \Big].  
\end{align}
Therefore, we have,
\begin{align}
\E[\tilde{Z}_N(d)] &\leq \frac{c}{2} \sum_{s,t = 1}^M \mathbf{K}(s,t) \rho^s \rho^t \bigl(1 - \langle d^{s}, d^{t} \rangle \bigr) + \frac{\sqrt{c}}{2} \mathcal{P}(d) + o(1). \nonumber \\
&\leq \tilde{M} + o(1), \nonumber
\end{align}
uniformly over all choices of $d^{1}, \ldots, d^{M}$. We note that empirical distributions within each block may assume only finitely many values and hence, summing over these values, we have, 
\begin{align}
\P[ \tilde{Z}_N > \tilde{M} + t] = \sum_{d^1, \ldots, d^M} \P \bigl[ \tilde{Z}_N(d) > \tilde{M} +t \bigr] . \nonumber 
\end{align}
Now, 
\begin{align*}
\P[\tilde{Z}_N(d)> \tilde{M} +t ] &\leq \P[ \tilde{Z}_N (d) - \E[\tilde{Z}_N(d)] > t] 
 \leq  \exp[- C N t^2], 
 \end{align*}
 where the last inequality follows by Gaussian concentration. Finally, plugging this tail bound, we have, 
 \begin{align}
 \P[\tilde{Z}_N > \tilde{M} + t ] \leq A N^{\kappa \tilde{M}} \exp[-CN t^2]. \nonumber  
 \end{align}
We note that 
 \begin{align}
 \E[\tilde{Z}_N] &\leq \int_{0}^{\infty} \P [ \tilde{Z}_N > x] \, \de x \leq \tilde{M} +  \delta_N + \int_{ \delta_N}^{\infty} \P[ \tilde{Z}_N > \tilde{M} + t] \, \de t \nonumber \\
 &\leq \tilde{M} + \delta_N +  A N^{\kappa \tilde{M}} \frac{\exp[- CN \delta_N^2]}{\sqrt{N}\delta_N} . \nonumber 
 \end{align}
 
 Finally, we choose $\delta_N = C_0 \sqrt{\frac{\log N}{N}}$ for some constant $C_0$ sufficiently large. This establishes that 
 \begin{align}
 \E[\tilde{Z}_N] \leq \tilde{M} + o(1), \nonumber 
 \end{align}
 completing the proof of the upper bound. 
\end{proof}

\begin{proof}[Proof of Theorem \ref{thm:discretization}]
The Theorem follows directly upon combining Lemma \ref{lemma:comp} and Lemma \ref{lemma:exp}.  

\end{proof}

\end{document}